\pgfplotsset{compat=1.13}
\newcommand{\RR}{\mathbb{R}}
\newcommand{\OO}{\mathcal{O}}
\newcommand{\QQ}{\mathbb{Q}}
\newcommand{\image}{\textnormal{im}\,}
\newcommand{\degree}{\textnormal{deg}\,}
\newcommand{\Hom}{\textnormal{Hom}}
\newcommand{\dimension}{\textnormal{dim}\,}
\newcommand{\rank}{\textnormal{rk}\,}
\newcommand{\Ext}{\textnormal{Ext}}
\newcommand{\Ac}{\mathcal{A}}
\newcommand{\Bc}{\mathcal{B}}
\newcommand{\Cc}{\mathcal{C}}
\newcommand{\Fc}{\mathcal{F}}
\newcommand{\Tc}{\mathcal{T}}
\newcommand{\Coh}{\mathrm{Coh}}
\newcommand{\arinj}{\ar@{^{(}->}}
\newcommand{\arsurj}{\ar@{->>}}
\newcommand{\areq}{\ar@{=}}
\newcommand{\wh}{\widehat}
\newcommand{\Bl}{\mathcal{B}^l}
\newcommand{\ch}{\mathrm{ch}}
\newcommand{\scalea}{\scalebox{0.5}}
\newcommand{\oo}{{\overline{\omega}}}
\newcommand{\whPhi}{{\wh{\Phi}}}
\newcommand{\Stab}{\mathrm{Stab}} 
\begin{document}

\title[Fourier-Mukai transforms and stable  sheaves on elliptic surfaces]{Fourier-Mukai transforms and stable sheaves \\ on Weierstrass elliptic surfaces}

\author[Wanmin Liu]{Wanmin Liu}
\address{Center for Geometry and Physics \\
Institute for Basic Science (IBS)\\
Pohang 37673 \\
Korea}
\address{Current: Department of Mathematics, Uppsala University, Box 480, 751 06 Uppsala, Sweden }
\email{wanminliu@gmail.com}

\author[Jason Lo]{Jason Lo}
\address{Department of Mathematics \\
California State University Northridge\\
18111 Nordhoff Street\\
Northridge CA 91330 \\
USA}
\email{jason.lo@csun.edu}

\author[Cristian Martinez]{Cristian Martinez}
\address{IMECC-UNICAMP\\
Departamento de Matem{\'a}tica\\
Rua S{\'e}rgio Buarque de Holanda, 651\\
Campinas-SP 13083-970, Brazil}
\email{cristian@unicamp.br}

\keywords{Weierstrass surface, elliptic surface, Fourier-Mukai transform, stability}
\subjclass[2020]{Primary 14J27; Secondary: 14J33, 14J60}
\date{\today}

\begin{abstract}
On a Weierstra{\ss} elliptic surface $X$, we define a `limit' of Bridgeland stability conditions, denoted as $Z^l$-stability, by moving the polarisation towards the fiber direction in the ample cone while keeping the volume of the polarisation fixed. We describe conditions under which a slope stable torsion-free sheaf is taken by a Fourier-Mukai transform to a $Z^l$-stable object, and describe a modification upon which a $Z^l$-semistable object is taken by the inverse Fourier-Mukai transform to a slope semistable torsion-free sheaf.  We also study wall-crossing for Bridgeland stability, and show that 1-dimensional twisted Gieseker semistable sheaves are taken by a Fourier-Mukai transform to Bridgeland semistable objects.

\end{abstract}

\maketitle
\tableofcontents

\section{Introduction}

The problem of whether stable sheaves remain stable under a Fourier-Mukai transform has a long history, and extensive literature has been devoted to it.  We point to \cite{FMNT} as a comprehensive overview of results on this topic. On elliptic surfaces alone, this problem has been studied for a variety of motivations including  the construction of stable sheaves with prescribed Chern classes \cite{YosAS,FMW}, birational properties of moduli of sheaves \cite{FMTes,bernardara2014},  the moduli of instantons in gauge theory \cite{jardim2003fourier}, and strange duality \cite{marian2013generic}, just to illustrate the breadth of works among the large amount of literature.


In this article, we study the Fourier-Mukai transforms of stable sheaves on elliptic surfaces using a fresh approach. Our key idea is to consider how slope stability itself transforms, without fixing Chern classes.  

Since slope stability for sheaves is used in constructing Bridgeland stability conditions, understanding the Fourier-Mukai transform of slope stability  is key to  understanding the Fourier-Mukai transform of Bridgeland stability.  When $X$ is a K3 surface, the action of the autoequivalence group  on the derived category  $D^b(X)$ of coherent sheaves on $X$ is intimately related to the geometry of the space of Bridgeland stability conditions on $D^b(X)$ \cite{SCK3}.  More generally, when $X$ is a smooth projective variety, solutions to certain equations involving this group action give Gepner-type stability conditions, which impose constraints on internal symmetries of Donaldson-Thomas type invariants \cite{toda2013gepner}, or give rise to  pseudo-Anosov autoequivalences, which are  related to the existence of stability conditions on the Fukaya category  \cite{dimitrov2013dynamical}.   In a subsequent article \cite{Lo20}, the second author builds on the techniques in this article, and describes explicitly the images of certain Bridgeland stability conditions under Fourier-Mukai transforms on elliptic surfaces.


Recall that Bridgeland's construction of geometric stability conditions depends on the choice of a polarisation $\omega$. More precisely, the map
$$
Z_{\omega}=-\ch_2+\frac{\omega^2}{2}\ch_0+i\omega\ch_1
$$
is the central charge of a stability condition on a heart $\mathcal{B}_{\omega}$, which is obtained as a tilt of $\Coh(X)$ using the Mumford slope
$$
\mu_{\omega}=\frac{\omega\ch_1}{\ch_0}.
$$
A Weierstra{\ss} elliptic surface $p\colon X\rightarrow B$ comes endowed with a non-trivial Fourier-Mukai autoequivalence $\Phi\colon D^b(X)\rightarrow D^b(X)$, whose kernel is the relative Poincar\'e sheaf for the fibration $p$, i.e., the universal sheaf for the moduli problem of parametrizing degree-zero, rank-one torsion-free sheaves on the fibers of $p$. Since the Picard rank of $X$ is at least two, we can vary the polarisation in the ample cone and aim to find a stability condition $(Z_{\omega}, \mathcal{B}_{\omega})$, for which the Fourier-Mukai transform of a slope stable sheaf is stable.   This turns out to be not exactly the case and we will rather construct a polynomial stability condition satisfying this requirement.  Roughly speaking, this polynomial stability is obtained by moving the ample class towards the fiber direction in the ample cone of $X$ while fixing the volume of $\omega$.  Our main results can be summarized as follows:

\begin{thm}[Theorem \ref{thm:Lo14Thm5-analogue}, Theorem \ref{thm:main1}, and Theorem \ref{assymp_one_dim}]\label{main_in_intro}
Let $p\colon X\rightarrow B$ be a Weierstra{\ss} elliptic surface. Denote by $\Theta$ its canonical section and by $f$ the fiber class. Let $m>0$ such that $\Theta + m f$ is ample. Denote $e=-\Theta^2$. Fix $\alpha>0$ and let $\overline{\omega}=\alpha^{-1}(\Theta+mf)+f$. Consider the family of stability conditions $(Z_{\omega},\mathcal{B}_{\omega})$, where $\omega=u(\Theta+mf)+vf$ with $u,v$ on the curve
$$
\omega^2=2(\alpha+m-e).
$$
Then
\begin{enumerate}
\item $Z_{\omega}$ defines a polynomial stability condition $Z^l$ with parameter $v$ over a limit heart $\mathcal{B}^l$.
\item If $E$ is a $\mu_{\overline{\omega}}$-stable torsion-free sheaf with $2\alpha\ch_1(E)\cdot \overline{\omega}-e\ch_0(E)>0$, then $\Phi(E)[1]$ is $Z^l$-semistable.
\item If $E$ is a $\mu_{\overline{\omega}}$-stable locally free sheaf then $\Phi(E)[1]$ is $Z^l$-stable.
\item If $\mathcal{E}$ is a 1-dimensional twisted $\overline{\omega}$-Gieseker semistable sheaf with $\ch_1(\mathcal{E})\cdot f>0$ and $2\ch_2(\mathcal{E})-e\ch_1(\mathcal{E})\cdot f\geq 0$, then $\Phi(\mathcal{E})$ is $Z^l$-semistable for $\alpha+m\gg 0$.
\end{enumerate}
\end{thm}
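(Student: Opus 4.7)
The plan is to prove $Z^l$-semistability of $\Phi(\mathcal{E})$ by contradiction: I would assume a destabilizing short exact sequence in $\mathcal{B}^l$, apply the quasi-inverse Fourier--Mukai transform $\widehat{\Phi}$, and extract a subsheaf of $\mathcal{E}$ whose existence violates $\overline{\omega}$-twisted Gieseker semistability in the regime $\alpha+m\gg 0$.

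First, I would verify that $\mathcal{E}$ is $\Phi$-WIT$_0$, so that $\Phi(\mathcal{E})$ is an honest sheaf on $X$. The hypothesis that $\mathcal{E}$ is pure 1-dimensional with $\ch_1(\mathcal{E})\cdot f>0$ guarantees that its support dominates $B$ through $p$ and that its restriction to a general fiber has positive degree, which kills the $R^1$-term of the Fourier--Mukai transform along that fiber; a standard cohomology-and-base-change argument then upgrades this to the vanishing of the relevant relative $R^1$ globally. The Chern character formulas established earlier in the paper give $\ch_0(\Phi(\mathcal{E}))=\ch_1(\mathcal{E})\cdot f>0$, and the hypothesis $2\ch_2(\mathcal{E})-e\,\ch_1(\mathcal{E})\cdot f\geq 0$ controls the sign of the $\omega$-slope so that $\Phi(\mathcal{E})$ lies in the limit heart $\mathcal{B}^l$ for every $\omega$ on the curve $\omega^2=2(\alpha+m-e)$.

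Next, I would suppose that $0\to A\to \Phi(\mathcal{E})\to Q\to 0$ is a short exact sequence in $\mathcal{B}^l$ with the $Z^l$-phase of $A$ strictly exceeding that of $\Phi(\mathcal{E})$ in the asymptotic regime defining $Z^l$. Applying $\widehat{\Phi}$ and using that $\widehat{\Phi}\Phi$ is a shift of the identity, the resulting exact triangle produces a long exact sequence of $\Phi$-WIT cohomology sheaves that relates $\mathcal{E}$ to the transforms of $A$ and $Q$. The goal of this step is to extract from it a genuine short exact sequence $0\to \mathcal{E}'\to \mathcal{E}\to \mathcal{E}''\to 0$ of 1-dimensional sheaves, whose Chern characters are expressible in terms of $\ch(A)$ and $\ch(Q)$ with rank and fiber degree transposed.

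The main obstacle lies in translating the asymptotic $Z^l$-inequality on $A,Q$ into the twisted Hilbert polynomial inequality defining $\overline{\omega}$-Gieseker semistability for $\mathcal{E}',\mathcal{E}''\subset\mathcal{E}$. This requires expanding $Z_\omega(A)$ and $Z_\omega(Q)$ to leading order in $v$ along the fixed-volume curve, matching the resulting coefficients with the twisted Hilbert polynomials of $\mathcal{E}'$ and $\mathcal{E}/\mathcal{E}'$ paired against $\overline{\omega}$, and using the hypothesis $\alpha+m\gg 0$ to ensure the leading comparison is sharp and that subleading corrections cannot reverse the inequality. Once this numerical dictionary is secured, the supposed $Z^l$-destabilization forces a $\overline{\omega}$-twisted Gieseker destabilization of $\mathcal{E}$, contradicting the hypothesis and establishing the desired semistability.
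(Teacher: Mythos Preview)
Your overall strategy---assume a destabilizing $\Bl$-sequence, apply $\whPhi$, and pull the contradiction back to twisted Gieseker semistability of $\mathcal{E}$---is the same as the paper's, but there are two genuine gaps in your plan.

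\textbf{The WIT$_0$ step is incomplete.} You argue that $\ch_1(\mathcal{E})\cdot f>0$ forces the support to dominate $B$ and that positive fiber degree kills $R^1$. But a pure $1$-dimensional sheaf with $\ch_1\cdot f>0$ may well have vertical components supported on fibers, and on such a component the relevant degree can be nonpositive, so your base-change argument says nothing there. The paper does not argue geometrically here: it uses the torsion pair $(W_{0,\Phi},W_{1,\Phi})$ to split off the $\Phi$-WIT$_1$ piece $E_1$, observes that $E_1$ must be fiber-supported (since $\ch_1(E_1)\cdot f=0$), hence $\ch_2(E_1)\leq 0$, and then invokes the twisted Gieseker \emph{stability} of $\mathcal{E}$ together with $\chi_L(\mathcal{E})\geq 0$ to force $E_1=0$. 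So the stability hypothesis is already essential for WIT$_0$; the numerical condition $\ch_1\cdot f>0$ alone does not do it.

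\textbf{The role of $\alpha+m\gg 0$ is misplaced.} You plan to use it at the end, to make the leading-order numerical comparison sharp. In the paper the assumption enters earlier and for different reasons: it is what guarantees $\Phi(\mathcal{E})$ is torsion-free and, crucially, that $\Phi(\mathcal{E})\in\Tc^{l,0}$ (so $\phi(\Phi(\mathcal{E}))\to 0$ along the limit). Once that is in place, the phase analysis forces the quotient $Q$ in the destabilizing sequence to be a sheaf, and then a further phase argument (using that $\Phi(\whPhi^0 Q)[1]$ would be a fiber sheaf of phase $\to\tfrac12$) forces $\whPhi^0 Q=0$. Only after these vanishings does the sequence land in $W_{1,\whPhi}$ and the slope identity
\[
\frac{\chi_L(\whPhi U[1])}{\oo\ch_1(\whPhi U[1])}=-\frac{\alpha\,\omega\ch_1(U)}{\beta\bigl(\ch_2(U)-\tfrac{\omega^2}{2}\ch_0(U)\bigr)}
\]
delivers the contradiction directly; no asymptotic expansion in $v$ is needed at that point. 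Your plan skips the step of proving $Q$ is a sheaf and $\whPhi^0 Q=0$, which is where most of the work lies.
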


We sometimes refer to the curve $\omega^2=2(\alpha+m-e)$ as a `volume section'.


In terms of the organisation of the article, after setting up the preliminaries and introducing the cohomological Fourier-Mukai transforms in Section \ref{sec:prelim}, we give the precise construction of $Z^l$-stability  on a Weierstra{\ss} surface in Section \ref{sec:limitBridgelandconstr}.  In Section \ref{sec:maintheorem}, we prove parts (2) and (3) of Theorem \ref{main_in_intro}, which compares slope stability and $Z^l$-stability for locally-free sheaves (Theorem \ref{thm:Lo14Thm5-analogue}). Section \ref{sec:HNproperty} is dedicated to the proof of the Harder-Narasimhan property for $Z^l$-stability, which concludes the proof of part (1) of Theorem \ref{main_in_intro}. In Section \ref{assymp_one_dim} we study the Fourier-Mukai transforms of 1-dimensional sheaves, part (4) of Theorem \ref{main_in_intro} is the content of Theorem \ref{assymp_one_dim}. At this point in the article, we begin fixing Chern characters and use the theory of $Z^l$-stability we have developed to study Fourier-Mukai transforms of stable sheaves.  This comes down to studying wall-crossing for Bridgeland stability conditions, and we give two approaches of different flavours.

The first approach is contained  in Section \ref{sec:trans1dimsh}, where  we consider walls given by Chern characters $\ch$ where $\ch_1$ is a positive multiple of the fiber class $f$ of the elliptic surface.  When the  elliptic surface has Picard rank two, we use Bogomolov inequalities to bound mini-walls on the curve along which $Z^l$-stability is defined.  This shows that the moduli space of Bridgeland stability at the far end of this curve coincides with the moduli space of  $Z^l$-stability.  As a result, we obtain Corollary \ref{cor:main1}, which says that if $\mathcal{E}$ is a 1-dimensional twisted Gieseker semistable sheaf, which has  positive twisted Euler characteristic and positive fiber degree $f\ch_1$, then its Fourier-Mukai transform is a Bridgeland stable object with 2-dimensional support.

The second approach is contained in Sections \ref{sec:BrivslimitBri} and \ref{sec:example}.  For this approach, we  begin by studying the asymptotics of walls in Section \ref{sec:BrivslimitBri}.  Then, in Section \ref{sec:example}, we apply the computations to elliptic surfaces of Picard rank two with a strictly negative  section.  Combined with Arcara-Miles' result on destabilising objects for line bundles, we obtain Proposition  \ref{prop:OxaLThetatransformss}, which roughly says that if $\mathcal{L}$ is  a line bundle of fiber degree at least 2, then it is a Bridgeland stable object, and  its inverse Fourier-Mukai transform is   a slope semistable locally free sheaf.

Proposition  \ref{prop:OxaLThetatransformss} is similar to a result due to the second author and Zhang on some  Weierstra{\ss} elliptic threefolds \cite[Theorem 4.4]{LZ3}, which says that if $\mathcal{L}$ is a line bundle of nonzero fiber degree, then its  Fourier-Mukai transform  is a slope stable  locally free sheaf. The argument for this threefold result, however, does not appear to reduce directly to the surface case.  Previous results on the transforms of stable torsion-free sheaves on elliptic threefolds include the works of C{\u{a}}ld{\u{a}}raru \cite{caldararu2000derived,cualduararu2002fiberwise}.

The essential ideas in Sections \ref{sec:limitBridgelandconstr} through \ref{sec:HNproperty} have also  appeared in the second author's preceding works on  a product elliptic threefold \cite{Lo14} and Weirstra{\ss} elliptic threefolds over a Fano or numerically $K$-trivial base  \cite{Lo15}.


\subsection*{Acknowledgements} The first author was supported by IBS-R003-D1. He thanks  California State University, Northridge for their support and hospitality during his visits in November 2017 and November 2018. He thanks Ludmil Katzarkov for comments and National Research University Higher School of Economics for the hospitality during the conference ``Who is Who in mirror symmetry". He also thanks Jihun Park and Tobias Ekholm for support. The second author  would  like to thank the National Center for Theoretical Sciences in Taipei for their support and hospitality during  December 2016-January 2017, when this work was initiated.  He also  thanks the Center for Geometry and Physics, Institute for Basic Science in Pohang, South Korea, for their support and hospitality throughout his visits in June-July 2017 and January 2018 during which much of this work was completed. The third author is supported by the FAPESP grant number 2020/06938-4, which is part of the FAPESP Thematic Project 2018/21391-1. He also would like to thank the Department of Mathematics at Universidad de los Andes for the excellent working conditions during his time as a postdoctoral assistant when part of this work was completed. The authors also thank Daniele Arcara, Ching-Jui Lai, Mu-Lin Li and Ziyu Zhang for answering their questions on surfaces, and the referees for suggestions on how to improve the manuscript.

\section{Preliminaries}\label{sec:prelim}

\paragraph[Our elliptic fibration]  Throughout \label{para:ourellipticfibration} this article, unless otherwise stated, we will write $p : X \to B$ to denote an elliptic surface that is a Weierstra{\ss} fibration in the sense of \cite{FMNT}  and \cite[Definition (II.3.2)]{MirLec}.  We do not place any restriction on the Picard rank of $X$ until the second half of  the paper.

\subparagraph[Elliptic surface] By an elliptic surface $p : X \to B$, we mean a flat morphism where $X$ is a smooth projective surface and $B$ is a smooth projective curve, and all the fibers are Gorenstein curves of arithmetic genus 1 \cite[Definition 6.8]{FMNT}.

\subparagraph[Weierstra{\ss} elliptic surface]  By a Weierstra{\ss} elliptic surface, or simply a Weierstra{\ss} surface, we will mean an elliptic surface $p : X \to B$ that is also a Weierstra{\ss} fibration in the sense of \cite[Definition 6.10]{FMNT}, which requires that all the fibers of $p$ are geometrically integral, and that $p$ admits a section $\sigma : B \to X$ whose image $\Theta = \sigma (B)$ does not meet any singular point of any fiber.

\paragraph[The N\'{e}ron-Severi group $\mathrm{NS}(X)$] Since \label{para:NSX} our elliptic fibration $p$ is assumed to be Weierstra{\ss}, there exists a section, and hence  the Picard rank of $X$ is finite by the Shioda-Tate formula \cite[VII 2.4]{MirLec}, while the N\'{e}ron-Severi group $\mathrm{NS}(X)$ is generated by the fiber class $f$ and a finite number of sections $\Theta_0 := \Theta, \Theta_1, \cdots, \Theta_r$ for some $r \geq 0$ \cite[VII 2.1]{MirLec}.

\paragraph[Geometry of $X$] The \label{para:geometry of X} fundamental line bundle of $p : X \to B$ is defined to be the line bundle  $\mathbb{L}:=(R^1p_*\OO_X)^{-1}\simeq p_* \omega_{X/B}$ \cite[II 3.6]{MirLec}. We also set   $\bar{K}:=c_1(p_* \omega_{X/B})\simeq c_1(\mathbb{L})$ and write  $e:=\degree(\mathbb{L})$. Then $p^\ast \bar{K}\equiv ef$ while we also know  that $\deg \mathbb{L} = - \Theta^2$ \cite[Theorem 7.20]{Fri98}. Hence  $\Theta^2=-e.$ Then by \cite[Proposition (III 1.1)]{MirLec} and \cite[(6.13)]{FMNT},
\begin{equation} \label{eq: canonical sheaf}
    \omega_X \simeq p^*(\omega_B\otimes \mathbb{L})\simeq p^*(\omega_B)\otimes \omega_{X/B}.
\end{equation}
By the  adjunction formula, we have $\Theta.(\Theta+K_X)= 2 g(B)-2=\Theta_i.(\Theta_i+K_X)$ and hence  $K_X \equiv (2g(B)-2+e)f$ and $\Theta_i^2=\Theta^2=-e$.

A classification of  Weierstra{\ss} elliptic surfaces  is contained in  \cite[Lemma (III 4.6)]{MirLec}.

\paragraph[Notation] We \label{para:notation} collect here preliminary notions and notations that will be used throughout the article.

\subparagraph[Twisted Chern character] For any divisor $B$ on a smooth projective  surface $X$ and any $E \in D^b(X)$, the twisted Chern character $\ch^B(E)$ is defined as
\[
\ch^B(E) = e^{-B}\ch(E)
= (1-B+\tfrac{B^2}{2})\ch(E).
\]
  We write $\ch^B(E) = \sum_{i=0}^2 \ch_i^B(E)$ where
\begin{align*}
  \ch_0^B(E) &= \ch_0(E), \\
  \ch_1^B(E) &= \ch_1(E)-B\ch_0(E), \\
  \ch_2^B(E) &= \ch_2(E) -B\ch_1(E)+\tfrac{B^2}{2}\ch_0(E).
\end{align*}
We sometimes refer to the divisor $B$ involved in the twisting of the Chern character as the `$B$-field'.  In this article, there should be no risk of confusion as to whether $B$ refers to the base of the elliptic fibration $p$ or a $B$-field.

\subparagraph[Cohomology] Suppose $\Ac$ is an abelian category and $\Bc$ is the heart of a t-structure on $D^b(\Ac)$.  For any object $E \in D^b(\Ac)$, we will write $\mathcal{H}^i_{\Bc}(E)$ to denote the $i$-th cohomology object of $E$ with respect to the t-structure with heart $\Bc$.  When $\Bc = \Ac$, i.e.\ when the aforementioned t-structure is the standard t-structure on $D^b(\Ac)$, we will write $H^i(E)$ instead of $\mathcal{H}^i_{\Ac}(E)$.

Given a smooth projective variety $X$, the dimension of an object $E \in D^b(X)$ will be denoted by $\dimension E$, and refers to the dimension of its support, i.e.\
\[
  \dimension E = \dimension \bigcup_i \mathrm{supp}\, H^i(E).
\]
For a coherent sheaf $E$, we have $\dimension E = \dimension \mathrm{supp} (E)$.

\subparagraph[Torsion pairs and tilting]  A \label{para:torsionpairtilting} torsion pair $(\Tc, \Fc)$ in an abelian category $\mathcal{A}$ is a pair of full subcategories $\Tc, \Fc$ such that
\begin{itemize}
\item[(i)] $\Hom_{\Ac}(E', E'')=0$ for all $E' \in \Tc, E'' \in \Fc$.
\item[(ii)] Every object $E \in \Ac$ fits in an $\Ac$-short exact sequence
\[
0 \to E' \to E \to E'' \to 0
\]
for some $E' \in \Tc, E'' \in \Fc$.
\end{itemize}
The decomposition of $E$ in (ii) is canonical \cite[Chapter 1]{HRS}, and we will  refer to it as the $(\Tc,\Fc)$-decomposition of $E$ in $\Ac$. Whenever we have a torsion pair $(\Tc, \Fc)$ in an abelian category $\mathcal{A}$, we will refer to $\Tc$ (resp.\ $\Fc$) as the torsion class (resp.\ torsion-free class) of the torsion pair.  The extension closure in $D^b(\Ac)$
\[
  \Ac' = \langle \Fc [1], \Tc \rangle
\]
is the heart of a t-structure on $D^b(\Ac)$ and hence an abelian subcategory of $D^b(\Ac)$.  We call $\Ac'$ the tilt of $\Ac$ at the torsion pair $(\Tc, \Fc)$.  More specifically, the category $\Ac'$ is the heart of the t-structure $(D^{\leq 0}_{\Ac'}, D^{\geq 0}_{\Ac'})$ on $D^b(\Ac)$ where
\begin{align*}
  D^{\leq 0}_{\Ac'} &= \{ E \in D^b(\Ac) : \mathcal{H}_{\Ac}^0 (E)\in \Tc, \mathcal{H}^i_{\Ac} (E)= 0 \, \forall\, i > 0 \}, \\
  D^{\geq 0}_{\Ac'} &= \{ E \in D^b(\Ac) : \mathcal{H}_{\Ac}^{-1} (E)\in \Fc, \mathcal{H}^i_{\Ac} (E)= 0 \,\forall\, i < -1 \}.
\end{align*}
A subcategory of $\Ac$ will be called a torsion class (resp.\ torsion-free class) if it is the torsion class (resp.\ torsion-free class) in some torsion pair in $\Ac$.  By a lemma of Polishchuk \cite[Lemma 1.1.3]{Pol}, if $\Ac$ is a noetherian abelian category, then every subcategory that is closed under extension and quotient in $\Ac$ is a torsion class in $\Ac$.

For any subcategory $\mathcal{C}$ of an abelian category $\mathcal{A}$, we will set
\[
  \mathcal{C}^\circ = \{ E \in \mathcal{A} : \Hom_{\mathcal{A}}(F,E)=0 \text{ for all } F \in \mathcal{C} \}
\]
when $\mathcal{A}$ is clear from the context.  Note that whenever $\mathcal{A}$ is noetherian and $\mathcal{C}$ is closed under extension and quotient in $\mathcal{A}$, the pair $(\mathcal{C},\mathcal{C}^\circ)$ gives a torsion pair in $\mathcal{A}$.


\subparagraph[Torsion $n$-tuples] A torsion $n$-tuple $(\Cc_1, \Cc_2,\cdots,\Cc_n)$ in an abelian category $\Ac$ as defined in \cite[Section 2.2]{Pol2} is a collection of full subcategories of $\Ac$ such that
\begin{itemize}
\item $\Hom_{\Ac} (C_i,C_j)=0$ for any $C_i \in \mathcal C_i, C_j \in \mathcal C_j$ where $i<j$.
\item Every object $E$ of $\Ac$ admits a filtration in $\Ac$
\[
  0=E_0 \subseteq E_1 \subseteq E_2 \subseteq \cdots \subseteq E_n = E
\]
where $E_i/E_{i-1} \in \mathcal C_i$ for each $1 \leq i \leq n$.
\end{itemize}
(See also \cite[Definition 3.5]{Toda2}.)   Given a torsion $n$-tuple in $\Ac$ as above, the pair
\[
(\langle \Cc_1, \cdots, \Cc_i\rangle, \langle \Cc_{i+1}, \cdots, \Cc_n \rangle )
\]
 is a torsion pair in $\Ac$ for any $1 \leq i \leq n-1$.

\subparagraph[Fourier-Mukai transforms] For any Weierstra{\ss} elliptic fibration $p : X \to B$ in the sense of \cite[Section 6.2]{FMNT}, there is a pair of relative Fourier-Mukai transforms $\Phi, \whPhi : D^b(X) \overset{\thicksim}{\to} D^b(X)$ whose kernels are both sheaves on $X \times _B X$, satisfying
\begin{equation}\label{eq:PhiwhPhiisidshifted}
  \whPhi \Phi = \mathrm{id}_{D^b(X)}[-1] = \Phi \whPhi.
\end{equation}
In particular, the kernel of $\Phi$ is the relative Poincar\'{e} sheaf for the fibration $p$, which is a universal sheaf for the moduli problem that parametrises degree-zero, rank-one torsion-free sheaves on the fibers of $p$.  An object $E \in D^b(X)$ is said to be $\Phi$-WIT$_i$ if $\Phi E$ is a coherent sheaf sitting at degree $i$.  In this case, we write $\wh{E}$ to denote a coherent sheaf satisfying  $\Phi E \cong \wh{E} [-i]$ up to isomorphism.  The notion of $\whPhi$-WIT$_i$ can similarly be defined.  The identities \eqref{eq:PhiwhPhiisidshifted} imply that, if a coherent sheaf $E$ on $X$ is $\Phi$-WIT$_i$ for $i=0, 1$, then $\wh{E}$ is $\whPhi$-WIT$_{1-i}$.  For $i=0,1$, we will define the category
\[
  W_{i,\Phi} = \{ E \in \Coh (X) : E \text{ is $\Phi$-WIT$_i$} \}
\]
and similarly for $\whPhi$.  Due to the symmetry between $\Phi$ and $\whPhi$, the properties held by $\Phi$ also hold for $\whPhi$.     See \cite[Section 6.2]{FMNT} for more background on the functors $\Phi, \whPhi$.

\subparagraph[Subcategories of $\Coh (X)$]  Let $p : X \to B$ be an elliptic surface as in \ref{para:ourellipticfibration}. For any integers $d \geq e$, we set
\begin{align*}
\Coh^{\leq d}(X) &= \{ E \in \Coh (X): \dimension \mathrm{supp}(E) \leq d\} \\
\Coh^d(p)_e &= \{ E \in \Coh (X): \dimension \mathrm{supp}(E) = d, \dimension p (\mathrm{supp}(E)) = e\} \\
\{ \Coh^{\leq 0} \}^\uparrow &= \{ E \in \Coh (X): E|_b \in \Coh^{\leq 0} (X_b) \text{ for all closed points $b \in B$} \}
\end{align*}
where $\Coh^{\leq 0}(X_b)$ is the category of coherent sheaves supported in dimension 0 on the fiber $p^{-1}(b)=X_b$, for the closed point $b \in B$.  We will refer to coherent sheaves that are supported on a finite number of fibers of $p$ as fiber sheaves.  Adopting the notation in \cite[Section 3]{Lo14}\footnote{\label{footnote-box-notation} When the Picard rank of the surface is two, these box notations correspond exactly to the signs of Chern characters of objects in these categories.}, we also define
\begin{align*}
   \scalea{\gyoung(;;,;;+)}&:= \Coh^{\leq 0}(X) \\
    \scalea{\gyoung(;;+,;;+)} &:= \{ E \in \Coh^1(\pi)_0 :  \text{ all $\mu$-HN factors of $E$ have $\infty>\mu>0$}\} \\
  \scalea{\gyoung(;;+,;;0)} &:=\{ E \in \Coh^1(\pi)_0 : \text{ all $\mu$-HN factors of $E$ have $\mu=0$}\} \\
  \scalea{\gyoung(;;+,;;-)} &:= \{ E \in \Coh^1(\pi)_0 :  \text{ all $\mu$-HN factors of $E$ have $\mu<0$}\} \\
   \scalea{\gyoung(;;*,;+;*)} &:= \Coh^1(\pi)_1 \cap \{\Coh^{\leq 0}\}^\uparrow\\
\scalea{\gyoung(;+;*,;+;*)} &= \{ E \in W_{0,\whPhi}: \dimension E = 2\} \\
\scalea{\gyoung(;+;*,;0;*)} &= \{ E \in \Phi (\{\Coh^{\leq 0}\}^\uparrow \cap \Coh^{\leq 1}(X)) : \dimension E = 2\} \\
\scalea{\gyoung(;+;*,;-;*)} &= \{ E \in  W_{1,\wh{\Phi}} : \dimension E =2, f\ch_1(E)\neq 0 \} .
\end{align*}
Note that the definitions of $\scalea{\gyoung(;+;*,;+;*)}, \scalea{\gyoung(;+;*,;0;*)}$ and $\scalea{\gyoung(;+;*,;-;*)}$ depend on the Fourier-Mukai functor $\whPhi$.  We will use the same notation to denote the corresponding category defined using $\whPhi$; it will always be clear from the context which Fourier-Mukai functor the definition is with respect to.  The Fourier-Mukai transform $\Phi$ induces the following equivalences, as already observed in \cite[Remark 3.1]{Lo14}:
\[
  \xymatrix @-1.3pc{
\scalea{\gyoung(;;,;;+)} \ar[dr] & \scalea{\gyoung(;;+,;;+)} \ar@/^1.5pc/[dd] & \scalea{\gyoung(;;*,;+;*)} \ar[dr] & \scalea{\gyoung(;+;*,;+;*)}  \ar@/^1.5pc/[dd]  \\
 & \scalea{\gyoung(;;+,;;0)} & & \scalea{\gyoung(;+;*,;0;*)} \\
  & \scalea{\gyoung(;;+,;;-)} & & \scalea{\gyoung(;+;*,;-;*)}
 }
\]
A concatenation of more than one such diagram will mean the extension closure of the categories involved; for example, the concatenation
\[
\xymatrix @-2.3pc{
\scalea{\gyoung(;;,;;+)} & \scalea{\gyoung(;;+,;;+)} \\
& \scalea{\gyoung(;;+,;;0)}
}
\]
is the extension closure of all slope semistable fiber sheaves of slope at least zero (including sheaves supported in dimension zero, which are slope semistable fiber sheaves of slope $+\infty$).

The category $\Coh^{\leq d}(X)$ for any integer $d \geq 0$, as well as $\{\Coh^{\leq 0}\}^\uparrow$ and $W_{0,\whPhi}$ are all torsion classes in $\Coh (X)$.  From \ref{para:torsionpairtilting}, each of these torsion classes determines a tilt of $\Coh (X)$, and hence determines a t-structure on $D^b(X)$. For instance, we have the torsion pairs $(W_{0,\whPhi}, W_{1,\whPhi})$ and $(\Coh^{\leq d}(X), \Coh^{\geq d+1}(X))$ in $\Coh (X)$.

\subparagraph[Slope functions]  Suppose \label{para:slopelikefunctions} $\Ac$ is an abelian category.  Any  function $\mu$ on $\Ac$ of the following form will be referred to as a slope  function 
\[
  \mu (F) = \begin{cases} \frac{C_1(F)}{C_0(F)} &\text{ if $C_0(F) \neq 0$} \\
  +\infty &\text{ if $C_0(F)=0$} \end{cases}
\]
where $C_0, C_1 : K(\Ac) \to \mathbb{Z}$ are a pair of group homomorphisms  satisfying: (i) $C_0(F) \geq 0 $ for any $F \in \Ac$; (ii) if $F \in \Ac$ satisfies $C_0(F)=0$, then $C_1 (F) \geq 0$.  The additive group $\mathbb{Z}$ in the definition of $\mu$ can be replaced by any  discrete additive subgroup of $\mathbb{R}$.  Whenever $\Ac$ is a noetherian abelian category, every slope function $\mu$ possesses the Harder-Narasimhan (HN) property \cite[Section 3.2]{LZ2}; we will then say an object $F \in \Ac$ is $\mu$-stable (resp.\ $\mu$-semistable) if, for every short exact sequence $0 \to M \to F \to N \to 0$ in $\Ac$ where $M,N \neq 0$, we have $\mu (M) < (\text{resp.} \leq \, )\, \mu (N)$.

\subparagraph[Slope stability] Suppose \label{para:muomegaBslopefctndefn} $X$ is a smooth projective surface with a fixed ample divisor   $\omega$ and a fixed divisor $B$.  For any coherent sheaf $E$ on $X$, we  define
\[
  \mu_{\omega,B} (E) = \begin{cases}
  \frac{\omega \ch_1^B (E)}{\ch_0^B(E)} &\text{ if $\ch_0^B(E) \neq 0$} \\
  +\infty &\text{ if $\ch_0^B(E)=0$}\end{cases}.
\]
A coherent sheaf $E$ on $X$ is said to be $\mu_{\omega,B}$-stable or slope stable (resp.\ $\mu_{\omega,B}$-semistable or slope semistable) if, for every short exact sequence in $\Coh (X)$ of the form
\[
0 \to M \to E \to N \to 0
\]
where $M,N \neq 0$, we have $\mu_{\omega,B} (M) < (\text{resp.}\, \leq) \, \mu_{\omega,B} (N)$.  Note that for any  coherent sheaf $M$ on $X$ with $\ch_0(M) \neq 0$,  we have
\[
\mu_{\omega,B} (M) = \frac{\omega \ch_1^B(M)}{\ch_0(M)} = \frac{\omega\ch_1(M)-\omega B\ch_0(M)}{\ch_0(M)} = \mu_\omega (M) -\omega B.
\]
Hence $\mu_{\omega,B}$-stability is equivalent to $\mu_\omega$-stability for  coherent sheaves.  When $B=0$, we often write $\mu_\omega$ for $\mu_{\omega,B}$.  Also, since $\mu_\omega$-stability has the HN property whenever $\omega$ is an $\mathbb{R}$-divisor that is a movable class \cite[Corollary 2.27]{greb2016movable}, the slope function $\mu_{\omega, B}$ also has the HN property for any $\mathbb{R}$-divisors $\omega, B$ where $\omega$ is ample.

\subparagraph[Bridgeland stability conditions on surfaces] Suppose \label{para:Bridgestabonsurfaces} $X$ is a smooth projective surface.  For any ample divisor $\omega$ and another divisor $B$ on $X$, we can define the following subcategories of $\Coh (X)$
\begin{align*}
  \Tc_{\omega,B} &= \langle F \in \Coh (X) :  F \text{ is $\mu_{\omega,B}$-semistable}, \, \mu_{\omega, B} (F) > 0\rangle, \\
  \Fc_{\omega, B} &= \langle F \in \Coh (X) : F \text{ is $\mu_{\omega, B}$-semistable}, \, \mu_{\omega,B} (F) \leq 0 \rangle.
\end{align*}
Since the slope function $\mu_{\omega,B}$ has the Harder-Narasimhan property, the pair $(\Tc_{\omega,B},\Fc_{\omega,B})$ is a torsion pair in $\Coh (X)$.  The extension closure
\[
  \Bc_{\omega,B} = \langle \Fc_{\omega,B} [1], \Tc_{\omega,B} \rangle
\]
in $D^b(X)$ is thus  a tilt of the heart $\Coh (X)$, i.e.\ $\Bc_{\omega,B}$ is  the heart of a bounded t-structure on $D^b(X)$ and is an abelian subcategory of $D^b(X)$.   If we set
\begin{equation}\label{eq:Zwformula}
  Z_{\omega,B} (F) = -\int_X e^{-i\omega}\ch^B(F) = -\ch_2^B(F) + \tfrac{\omega^2}{2}\ch_0(F) + i\omega \ch_1^B(F),
\end{equation}
then the pair 
\begin{equation}
    \label{eq:Bridgeland stab}
    (\Bc_{\omega,B}, Z_{\omega,B})=:\sigma_{\omega,B}
\end{equation}
gives a Bridgeland stability condition on $D^b(X)$, as shown by Arcara-Bertram  in \cite{ABL}.   In particular, for any nonzero object $F \in \Bc_{\omega,B}$, the complex number $Z_{\omega,B} (F)$ lies in the upper-half complex plane (that includes the negative real axis)
\[
  \mathbb{H} = \{ re^{i\pi \phi} : r>0, \phi \in (0,1] \}.
\]
This allows us to define the phase $\phi (F)$ of any nonzero object $F \in \Bc_{\omega,B}$ using the relation
\[
  Z_{\omega,B} (F) \in \mathbb{R}_{>0} e^{i \pi \phi (F)}  \text{\quad where } \phi (F) \in (0,1].
\]
We then say an object $F \in \Bc_{\omega,B}$ is $Z_{\omega,B}$-stable (resp.\ $Z_{\omega,B}$-semistable) if, for all $\Bc_{\omega,B}$-short exact sequence
\[
0 \to M \to F \to N \to 0
\]
where $M, N \neq 0$, we have $\phi (M) < \phi (N)$ (resp.\ $\phi (M) \leq \phi (N)$).

If $B=0$, we write $Z_\omega$ and $\Bc_\omega$ instead of $Z_{\omega,0}$ and $\Bc_{\omega,0}$ respectively.

\paragraph[The cohomological Fourier-Mukai transforms] For  \label{para:cohomFMTformulas} any $E \in D^b(X)$, let \footnote{\label{footnote-different-notations} We have used two different system of notations for components of Chern characters. When we compute Fourier-Mukai transform, we follow the notations in \cite[Section 6.2]{FMNT}. When we compute wall-crossing formulas, we follow the notations in \cite{Mac2}. See \eqref{eq-ch-Maciocia} in Appendix~\ref{sec:Bridgeland wall-chamber structures}.}
\begin{align}
  n &= \ch_0(E), \notag\\
  d = f\ch_1(E)&, \,\,  \text{\qquad } c = \Theta\ch_1(E), \notag\\
  s &= \ch_2(E). \label{eq:chEnotation}
\end{align}
Then from the cohomological Fourier-Mukai transform in   \cite[(6.21)]{FMNT} we have
\begin{align}
\ch_0 (\Phi E) &= d, \notag\\
\ch_1 (\Phi E) &= -\ch_1(E) + dp^\ast \bar{K} + (d-n)\Theta + (c-\tfrac{1}{2}ed + s)f, \notag\\
\ch_2 (\Phi E) &= (-c-de + \tfrac{1}{2}ne) \label{eq:cohomFMT-Phi}
\end{align}
where $\Theta^2 = -e$ and $\bar{K} = c_1 (p_\ast \omega_{X/B})$ as in \ref{para:geometry of X}.  Since $p^\ast \bar{K} \equiv ef$, we have $ \ch_1(\Phi E). f =  -n$ and $\ch_1(\Phi E). \Theta =  (s-\tfrac{e}{2}d)+ne$.
In particular, for any $m \in \mathbb{R}$ we have
\begin{align}
\ch_1(\Phi E). f &=  -n,\notag\\
\ch_1(\Phi E). (\Theta + mf) &= s-\tfrac{e}{2}d + (e-m)n. \label{eq:cohomFMT-Phi-ch1}
\end{align}  On the other hand, from \cite[(6.22)]{FMNT} we have
\begin{align}
  \ch_0 (\wh{\Phi}E) &= d, \notag\\
  \ch_1 (\wh{\Phi}E) &= \ch_1(E) - np^\ast \bar{K} - (d+n)\Theta + (s+en-c-\frac{e}{2}d)f, \notag \\
  \ch_2 (\wh{\Phi}E) &= -(c+de+\tfrac{e}{2}n). \label{eq::cohomFMT-hatPhi}
\end{align}
This gives $\ch_1 (\wh{\Phi} E). f = -n$ and $\ch_1 (\wh{\Phi}E). \Theta = s+\tfrac{e}{2}d + ne$.  In particular, for any $m \in \mathbb{R}$ we have
 \begin{align}
  \ch_1 (\wh{\Phi} E). f &= -n, \notag\\
  \ch_1 (\wh{\Phi} E). (\Theta + mf) &= s+\tfrac{e}{2}d+(e-m)n.  \label{eq::cohomFMT-hatPhi-ch1}
\end{align}

\paragraph[Some intersection numbers] Here \label{para:basicintersectionnumbers} we collect some intersection numbers that will be used throughout the rest of the paper.  For any $m \in \RR$ we have
\begin{equation*}
  (\Theta +mf)^2 = \Theta^2 + 2m = 2m-e.
\end{equation*}
Recall that for  any section $\Theta$ of the fibration $p$, the divisor $\Theta + mf$ on $X$ is  ample for $m \gg 0$ \cite[Proposition 1.45]{KM}. Let us fix an $m$ such that $\Theta + mf$ is ample. We will often work with a polarisation of the form
\begin{equation}\label{eq:omeganotation}
  \omega = u(\Theta + mf) + vf
\end{equation}
for $u,v>0$, which gives
\begin{equation} \label{eq:volume_omega_half}
  \tfrac{\omega^2}{2} = (m-\tfrac{e}{2})u^2 + uv.
\end{equation}
If we use the notation for $\ch(E)$ in \eqref{eq:chEnotation} then $(\Theta + mf) \ch_1 (E) = c + md$ and
\begin{align*}
\omega \ch_1 (E) &=  (u(\Theta + mf)+vf)\ch_1(E) \\
&= uc + (um+v)d.
\end{align*}
If we also set
\[
 \oo = a(\Theta + mf)+bf,
\]
 where $a, b \in \RR$ and fix $B = \tfrac{e}{2}f$ then
\begin{align*}
  \oo \ch_1^B(E) &=  \oo (\ch_1(E) - \tfrac{e}{2}f\ch_0(E)) \\
  &= a(c-\tfrac{e}{2}n) + (am+b)d.
\end{align*}
Thus when $\oo$ is an ample divisor on $X$, we can write the twisted slope function $\mu_{\oo,B}$ as
\begin{equation}\label{eq:muBbbE}
  \mu_{\oo,B}(E) = \tfrac{1}{n} (a(c-\tfrac{e}{2}n) + (am+b)d).
\end{equation}
On the other hand, when $\omega$ is an ample divisor on $X$, with respect to the central charge \eqref{eq:Zwformula} and using \eqref{eq:cohomFMT-Phi-ch1} we have
\begin{align}
  Z_\omega (\Phi E [1]) &=  \ch_2(\Phi E)  -\tfrac{\omega^2}{2}\ch_0(\Phi E) - i\omega \ch_1(\Phi E) \notag\\
  &= (-c-de+\tfrac{e}{2}n) - ((m-\tfrac{e}{2})u^2+uv)d - i \left( u ( s-\tfrac{e}{2}d+(e-m)n) - vn \right)  \notag\\
  &= (-c + \tfrac{e}{2}n) - ( (m-\tfrac{e}{2})u^2 + uv +e)d + i \left( u (-( s-\tfrac{e}{2}d)+(m-e)n)+vn\right). \label{eq:ZwPhiEshift}
\end{align}

\paragraph[Heuristics and a volume section] Comparing \label{para:heuristics} the coefficients of the characteristic classes $(c-\tfrac{e}{2}n)$ and $d$ in the  expressions for $\mu_{\oo,B} (E)$ and $Z_\omega (\Phi E [1])$, we see that for fixed $m, a, b>0$, if $v \to \infty$ along the curve
\[
  \frac{am+b}{a} = (m-\tfrac{e}{2})u^2+uv+e,
\]
i.e.\
\begin{equation*}
  m + \tfrac{b}{a} = (m-\tfrac{e}{2})u^2 + uv +e,
\end{equation*}
then $\oo \ch_1^B(E)$ is a negative scalar multiple of $\Re Z_\omega (\Phi E [1])$, while $\Im Z_\omega (\Phi E [1])$ is dominated by a positive scalar multiple of $\ch_0(E)$.  This suggests that  for $v \gg 0$, $\mu_{\oo,B}$-stability for $E$ should be an `approximation' of  $Z_\omega$-stability up to the Fourier-Mukai transform $\Phi$, or that $Z_\omega$-stability is a `refinement' of $\mu_{\oo,B}$-stability for $E$ up to $\Phi$.  We will make this idea precise in Sections \ref{sec:limitBridgelandconstr} through \ref{sec:HNproperty}.  The computation above also motivates us to consider the change of variables
\begin{equation*}
  \beta = b, \,\,\alpha = \tfrac{b}{a},
\end{equation*}
so that $\oo$ can be written as
\begin{equation}\label{eq:oonotation}
  \oo = \tfrac{\beta}{\alpha} (\Theta + mf) + \beta f.
\end{equation}
Moreover, the $\mu_{\oo,B}$-stability depends only on $\alpha$ but not $\beta$. 
We can think of $\mu_{\oo,B}$-stability as being approximated by $Z_\omega$-stability as $v \to \infty$ along the curve
\begin{equation}\label{eq:vhyperbolaequation}
(m-\tfrac{e}{2})u^2 + uv =   \alpha+m - e,
\end{equation}\label{eq:vhyperbolaequation2}
which, by \eqref{eq:volume_omega_half}, is equivalent to 
\begin{equation} \label{eq:volume_omega}
    \omega^2= 2( \alpha+m - e).
\end{equation}
That is, by imposing the constraint \eqref{eq:vhyperbolaequation} we are fixing the volume of the polarisation $\omega$ while moving $\omega$ towards the fiber direction $f$ in the ample cone of $X$.  As a result, we refer to the plane curve \eqref{eq:vhyperbolaequation} as a \emph{volume section}.

In particular, the volume section \eqref{eq:vhyperbolaequation} is asymptotic to the curve
\begin{equation}\label{eq:vhyperbolaequation-asymptotic}
u=\frac{1}{v}\left(\alpha+m-e\right) \text{ as } v \rightarrow \infty.
\end{equation}
We will revisit \eqref{eq:vhyperbolaequation} in a new coordinate in \eqref{eq:curve-in-lambda-q-plane}. See also Figure~\ref{fig:q-lambda}.

\begin{figure}
\centering
\begin{tikzpicture}[thick]
        \begin{axis}[
            xmin=0, xmax=40, 
            ymin=0, ymax=1,
            axis x line = center, 
            axis y line = center,
            xtick = \empty,
            ytick = \empty,
            xlabel = {$v$},
            ylabel = {$u$},
            xlabel style={below right},
            ylabel style={above left},
            legend style = {nodes=right},
            legend pos = north east,
            clip mode = individual,
            axis line style=black!50,
            ]
            
            \node[below left] at (0,0) {$0$};
            
            \tikzmath{
            \m=3;
            \a=2;
            \e=2;
            \b=\m+ \a-\e;
            \c=-0.5*\e+\m;
            } 
  
            \addplot[black!30, samples=280, domain=2:30] {\b/x};
            \addplot[black!30, dotted, samples=100, domain=30:40] {\b/x};
            \addplot[black, samples=280, domain=2:30] {(sqrt(x*x+4*\b*\c)-x)/(2*\c)};
            \addplot[black, dotted, samples=100, domain=30:40] {(sqrt(x*x+4*\b*\c)-x)/(2*\c)};

        \end{axis}
\end{tikzpicture}
\caption{The volume section \eqref{eq:vhyperbolaequation} (black) and its asymptotic curve \eqref{eq:vhyperbolaequation-asymptotic} (gray) as $v\rightarrow\infty$.}
\label{fig:vhyperbolaequation}
\end{figure}
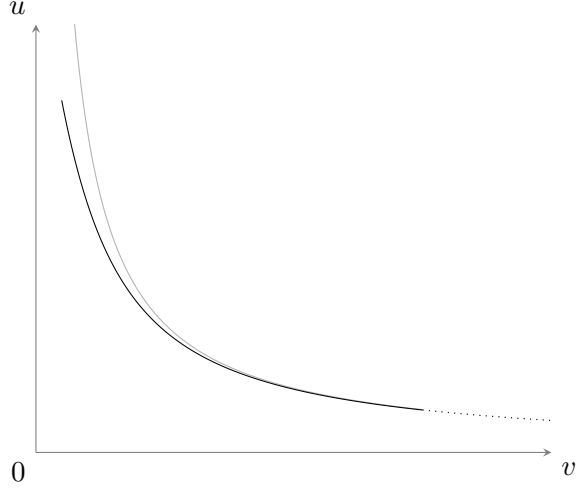

\paragraph[Decomposing $\mu_\omega$] Suppose $F$ is an object in $D^b(X)$.  With $\omega$ as in \eqref{eq:omeganotation},  we can rewrite $\mu_\omega (F)$ as
\begin{align}
  \mu_\omega (F) = \frac{\omega \ch_1(F)}{\ch_0(F)} &= u \frac{(\Theta + mf)\ch_1(F)}{\ch_0(F)}+ v \frac{f\ch_1(F)}{\ch_0(F)} \notag\\
  &= u \mu_{\Theta + mf}(F) + v \mu_f (F). \label{eq:muwdecomposition}
\end{align}
Recall that the divisor $\Theta + mf$ is ample on $X$ for $m \gg 0$ while $f$ is a nef divisor on $X$. Therefore,  both  $\mu_{\Theta + mf}$ and $\mu_f$ are slope functions with the Harder-Narasimhan property (see \ref{para:slopelikefunctions}).

\paragraph For \label{para:ch1realZwcomparison} fixed $\beta, \alpha >0$, with  $\oo$ as in \eqref{eq:oonotation}, $\omega$ as in \eqref{eq:omeganotation}, and $u,v>0$ under the constraint \eqref{eq:vhyperbolaequation}, we have the following observation that will be useful later on: with the same notation for $\ch(E)$ as in \ref{para:cohomFMTformulas}, for the $B$-field $B=\tfrac{e}{2}f$ we have
\begin{align}
  \oo \ch_1^B (E) &= \oo (\ch_1(E) - B \ch_0(E)) \notag\\
  &= \tfrac{\beta}{\alpha} ( (c-\tfrac{e}{2}n) + (m + \alpha)d) \notag\\
  &= - \tfrac{\beta}{\alpha} \Re Z_\omega (\Phi E [1]). \label{eq:ooch1twistReZePhiE1relation}
\end{align}
In particular, if $F$ is a $\whPhi$-WIT$_1$ sheaf on $X$ of nonzero rank  with $f\ch_1(F)=0$, then $\wh{F} = \whPhi F[1]$ is a sheaf supported in dimension 1, implying $\oo \ch_1^B(\wh{F}) = \oo \ch_1 (\wh{F})>0$.  Then
\[
  \Re Z_\omega (F) = \Re Z_\omega (\Phi \wh{F})=-\Re Z_\omega (\Phi \wh{F}[1])= \tfrac{\beta}{\alpha}\oo \ch_1 (\wh{F})>0.
\]

\begin{table}
\caption {A summary of notations for `before' and `after' the autoequivalence $\Phi [1]$. Let $m>0$ such that $\Theta+m f$ is ample. Fix $\alpha>0$. }
\begin{center}
\begin{tabular}{ c  c  c }
\hline
$E$ & $\underset{\whPhi}{\stackrel{\Phi[1]}{\rightleftharpoons}}$ & $F=\Phi E [1]$ \\
$B$-field $B=\frac{e}{2}f \equiv{p^\ast \bar{K}}/2$ &  & $B$-field $B=0$ \\
$\oo=\tfrac{\beta}{\alpha}(\Theta+mf)+ \beta f$  &  & $\omega=u(\Theta+mf)+vf$ \\
as \eqref{eq:oonotation} &    & $\omega=t H_\lambda$ via \eqref{eq-coordinate-change} \\
 & Volume section (\ref{eq:vhyperbolaequation}) or (\ref{eq:curve-in-lambda-q-plane}) & $Z_\omega=Z_{\omega,0}$ as (\ref{eq:Zwformula}) \\
$\mu_{\oo,B}$ as (\ref{para:muomegaBslopefctndefn}) & Limit along volume section  & $Z^l$ as (\ref{para:limit_Bridgeland_stability})\\
 & as $v\to \infty$ or $\lambda\to 0^+$. & \\
$\Coh(X)$ & & $\Phi(\Coh(X))[1]$\\
$\whPhi\Bc^l$ as (\ref{para:torsionfreeshtransf}) & & $\Bc^l$ as (\ref{para:TlFlproperties})\\
\hline
\end{tabular}
\end{center}
\end{table}

\section{Constructing a limit Bridgeland stability}\label{sec:limitBridgelandconstr}

Since the Bridgeland stability condition $(\Bc_\omega, Z_\omega)=\sigma_{\omega,0}$ \eqref{eq:Bridgeland stab} on $X$ depends on $\omega$, varying $\omega$ will change the stability condition accordingly (see \ref{para:Bridgestabonsurfaces}).  In this section, we will show that when $\omega$ is written in the form \eqref{eq:omeganotation}
\[
\omega = u(\Theta + mf) + vf
\]
with a fixed volume \eqref{eq:volume_omega} and $v \to \infty$ along to the  curve \eqref{eq:vhyperbolaequation}, we obtain a notion of stability with the Harder-Narasimhan property, which can be considered as a `limit Bridgeland stability'.

Due to the symmetry between $\Phi$ and $\whPhi$, all the results involving $\Phi$ and $\whPhi$ in this section and beyond still hold if we interchange $\Phi$ and $\whPhi$ (except for explicit computations involving Chern classes, since the cohomological Fourier-Mukai transforms corresponding to $\Phi$ and $\whPhi$ are different - see \ref{para:cohomFMTformulas}).

For the rest of this article, let us fix an $m >0$ so that $\Theta + mf$ is ample. We write $\omega$ in the form \eqref{eq:omeganotation} with $u,v>0$.


\begin{lem}\label{lem:TclFcldefinitions}
Suppose $u_0>0$ and $F \in \Coh (X)$.
\begin{itemize}
\item[(1)] The following are equivalent:
  \begin{itemize}
  \item[(a)] There exists $v_0>0$ such that $F \in \Fc_\omega$ for all $(v,u) \in (v_0,\infty) \times (0,u_0)$.
  \item[(b)] There exists $v_0>0$ such that, for every nonzero subsheaf $A \subseteq F$, we have $\mu_\omega (A) \leq 0$ for all $(v,u) \in (v_0,\infty)\times (0,u_0)$.
  \item[(c)] For every nonzero subsheaf $A \subseteq F$, either (i) $\mu_f (A) < 0$, or (ii) $\mu_f (A)=0$ and also $\mu_{\Theta + mf}(A) \leq 0$.
  \end{itemize}
\item[(2)] The following are equivalent:
  \begin{itemize}
  \item[(a)] There exists $v_0>0$ such that $F \in \Tc_\omega$ for all $(v,u) \in (v_0,\infty) \times (0,u_0)$.
  \item[(b)]  There exists $v_0>0$ such that, for every nonzero sheaf quotient $F \twoheadrightarrow A$, we have $\mu_\omega (A) > 0$ for all $(v,u) \in (v_0,\infty) \times (0,u_0)$.
  \item[(c)] For any nonzero sheaf quotient $F \twoheadrightarrow A$, either (i) $\mu_f (A)>0$, or (ii) $\mu_f(A)=0$ and $\mu_{\Theta + mf}(A) >0$.
  \end{itemize}
\end{itemize}
\end{lem}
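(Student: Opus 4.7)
The plan is to prove parts (1) and (2) in parallel (part (2) being the dual of part (1) with quotients in place of subsheaves), so I focus on (1). The core tool is the linear decomposition $\mu_\omega(A)=u\mu_{\Theta+mf}(A)+v\mu_f(A)$ of \eqref{eq:muwdecomposition}, combined with the Harder--Narasimhan property of $\mu_{\Theta+mf}$ (guaranteed since $\Theta+mf$ is ample) and the discreteness of $\mu_f$-slopes of subsheaves of a fixed $F$.

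The equivalence (a)$\Leftrightarrow$(b) is a direct consequence of the HN property of $\mu_\omega$-stability: $F\in\Fc_\omega$ if and only if its maximal $\mu_\omega$-destabilising subsheaf has $\mu_\omega\leq 0$, which is in turn equivalent to every nonzero subsheaf of $F$ having $\mu_\omega\leq 0$. For (b)$\Rightarrow$(c), a torsion subsheaf of $F$ would have $\mu_\omega=+\infty$, so (b) forces $F$ torsion-free; then for any nonzero subsheaf $A$, letting $v\to\infty$ in $u\mu_{\Theta+mf}(A)+v\mu_f(A)\leq 0$ forces $\mu_f(A)\leq 0$, and the residual inequality in the case $\mu_f(A)=0$ yields $\mu_{\Theta+mf}(A)\leq 0$ by varying $u$.

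The main obstacle is (c)$\Rightarrow$(b), where one must produce a single $v_0$ working uniformly for all subsheaves of $F$. Again (c) forces $F$ torsion-free. Set $r=\ch_0(F)$. Any torsion-free subsheaf $A\subseteq F$ has $\ch_0(A)\in\{1,\dots,r\}$ and $f\ch_1(A)\in\mathbb{Z}$, so if $\mu_f(A)<0$ then $\mu_f(A)\leq -1/r$; this discreteness separates negative $\mu_f$-slopes uniformly away from $0$. Since $\mu_{\Theta+mf}$ has HN property on $\Coh(X)$, the quantity $M:=\mu_{\Theta+mf}^{\max}(F)$ (the slope of the first $\mu_{\Theta+mf}$-HN factor of $F$) is finite and uniformly bounds $\mu_{\Theta+mf}(A)$ from above. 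Setting $v_0:=r u_0 \max(M,0)$ yields $\mu_\omega(A)\leq u_0 M-v/r\leq 0$ whenever $v>v_0$ and $\mu_f(A)<0$; the case $\mu_f(A)=0$ is immediate from (c).

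Part (2) is the dual argument: $F\in\Tc_\omega$ if and only if every nonzero quotient has $\mu_\omega>0$; the role of $\mu_{\Theta+mf}^{\max}$ is replaced by $\mu_{\Theta+mf}^{\min}(F)$ bounding $\mu_{\Theta+mf}(A)$ from below over quotients $A$; the discreteness bound becomes $\mu_f(A)\geq 1/r$ when $\mu_f(A)>0$; and $v_0$ is chosen analogously. The only step with real content is the uniform construction of $v_0$ in (c)$\Rightarrow$(b); all other implications reduce to the HN formalism and direct limit arguments.
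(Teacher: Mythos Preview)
Your proof is correct and follows essentially the same approach as the paper, which simply defers to \cite[Lemmas 4.1 and 4.3]{Lo14} (replacing their slope function $\mu^\ast$ by $\mu_{\Theta+mf}$). Your argument spells out what those lemmas do: the decomposition \eqref{eq:muwdecomposition}, the discreteness bound $|\mu_f(A)|\geq 1/r$ for nonzero $\mu_f$, and the uniform control of $\mu_{\Theta+mf}(A)$ via the extremal HN slope of $F$ are exactly the ingredients needed, and your choice of $v_0$ makes the uniformity in (c)$\Rightarrow$(b) explicit.
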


\begin{proof}
The proofs for parts (1) and (2) are essentially the same as those for \cite[Lemma 4.1]{Lo14} and \cite[Lemma 4.3]{Lo14}, respectively, if we replace the slope function $\mu^\ast$ in those proofs by $\mu_{\Theta + mf}$.
\end{proof}

\paragraph[A limit of the heart $\Bc_\omega$] We \label{para:TlFlproperties} now define the following subcategories of $\Coh (X)$:
\begin{itemize}
\item $\Tc^l$, the extension closure of all coherent sheaves satisfying condition (2)(c) in Lemma \ref{lem:TclFcldefinitions}.
\item $\Fc^l$,  the extension closure of all coherent sheaves satisfying condition (1)(c) in Lemma \ref{lem:TclFcldefinitions}.
\end{itemize}
We also define the extension closure in $D^b(X)$
\[
  \Bc^l = \langle \Fc^l [1], \Tc^l \rangle.
\]
Following an argument as in the proof of Lemma \ref{lem:TclFcldefinitions}, it is easy to check that the categories $\Tc^l, \Fc^l$ can equivalently be defined as
\begin{align*}
  \Tc^l &= \{ F \in \Coh (X): F \in \Tc_\omega \text{ for all $v \gg 0$ along \eqref{eq:vhyperbolaequation} } \} \\
  \Fc^l &= \{ F \in \Coh (X): F \in \Fc_\omega \text{ for all $v \gg 0$ along \eqref{eq:vhyperbolaequation} }\}.
\end{align*}
The following immediate properties  are analogous to those  in \cite[Remark 4.4]{Lo14}:
\begin{itemize}
\item[(i)] $\Coh^{\leq 1}(X) \subset \Tc^l$ since  all the torsion sheaves are contained in $\Tc_\omega$, for any ample divisor $\omega$.
\item[(ii)] $\Fc^l \subset \Coh^{=2}(X)$ since every object in $\Fc_\omega$ is a torsion-free sheaf, for any ample divisor $\omega$.
\item[(iii)] $W_{0,\whPhi} \subset \Tc^l$ by the same argument as in \cite[Remark 4.4(iii)]{Lo14}.
\item[(iv)] $f\ch_1(F) \geq 0$ for every $F \in \Bc^l$.  This is clear from the definition of $\Bc^l$ and Lemma \ref{lem:TclFcldefinitions}.  Lemma \ref{lem:TlFlaretorsionpair} below shows that $\Bc^l$ is the heart of a t-structure on $D^b(X)$, and hence an abelian category.  The subcategory
    \[
    \Bl_0 := \{ F \in \Bl : f\ch_1(F)=0 \}
    \]
    is then a Serre subcategory of $\Bl$.
\item[(v)] $\Fc^l \subset W_{1,\whPhi}$.  This follows from (iii) and Lemma \ref{lem:TlFlaretorsionpair} below.
\end{itemize}

\begin{lem}\label{lem:TlFlaretorsionpair}
The pair $(\Tc^l, \Fc^l)$ forms a torsion pair in $\Coh (X)$, and the category $\Bc^l$ is the heart of a bounded t-structure on $D^b(X)$.
\end{lem}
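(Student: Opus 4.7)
My plan is to verify the two axioms of a torsion pair for $(\Tc^l,\Fc^l)$ and then appeal to the Happel-Reiten-Smal{\o} tilting construction recalled in \ref{para:torsionpairtilting}, which automatically yields that $\Bc^l$ is the heart of a bounded t-structure on $D^b(X)$. The orthogonality $\Hom_{\Coh(X)}(\Tc^l,\Fc^l)=0$ is immediate from the alternative description given just before the lemma: for any $T\in\Tc^l$ and $F\in\Fc^l$, choose a single $v$ large enough on the volume section~\eqref{eq:vhyperbolaequation} so that $T\in\Tc_\omega$ and $F\in\Fc_\omega$ simultaneously, and invoke the Arcara-Bertram torsion pair $(\Tc_\omega,\Fc_\omega)$ at that $\omega$.

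For the existence of $(\Tc^l,\Fc^l)$-decompositions, the first step is to verify that $\Tc^l$ coincides with the class of coherent sheaves satisfying condition (2)(c) of Lemma~\ref{lem:TclFcldefinitions}, i.e.\ that this class is already closed under extensions in $\Coh(X)$. This reduces to a four-case analysis: for $0\to F_1\to F\to F_2\to 0$ with $F_1,F_2$ satisfying (2)(c) and a nonzero quotient $F\twoheadrightarrow G$, I decompose $G$ via the image $A$ of $F_1$ in $G$, and check in each of the four combinations of (i)/(ii) for $A$ and $G/A$ that additivity of $\ch_0$, $f\ch_1$ and $(\Theta+mf)\ch_1$ propagates the positive lex-slope condition onto $G$ itself. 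Since (2)(c) is tautologically closed under quotients, this makes $\Tc^l$ closed under both extensions and quotients, and Polishchuk's lemma (\ref{para:torsionpairtilting}) in the noetherian category $\Coh(X)$ then supplies a torsion pair $(\Tc^l,(\Tc^l)^\circ)$.

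It remains to identify $(\Tc^l)^\circ$ with $\Fc^l$. The inclusion $\Fc^l\subseteq(\Tc^l)^\circ$ is the orthogonality already shown. For the reverse I argue contrapositively: take $F\in\Coh(X)\setminus\Fc^l$, which is automatically torsion-free since $\Coh^{\leq 1}(X)\subseteq\Tc^l$ by property (i) of~\ref{para:TlFlproperties}, and produce a nonzero subsheaf of $F$ lying in $\Tc^l$. By definition there is a torsion-free subsheaf $A\subseteq F$ with either $\mu_f(A)>0$, or $\mu_f(A)=0$ and $\mu_{\Theta+mf}(A)>0$. In the first case the first $\mu_f$-HN factor of $A$ is $\mu_f$-semistable of strictly positive slope and already lies in $\Tc^l$. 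In the second case, either some $\mu_f$-HN factor of $A$ has positive slope (reducing to the first case), or else the $\mu_f$-HN slopes of $A$ are all nonpositive and average to zero, forcing them all to be zero and hence $A$ to be $\mu_f$-semistable of slope zero; I then induct on $\rank A$, branching on whether $A$ admits a proper subsheaf $K\subsetneq A$ with $\mu_f(K)=0$ and $\mu_{\Theta+mf}(K)>0$ (apply the inductive hypothesis to $K$) or not (in which case every proper $K$ has $\mu_f(K)<0$ or $(\mu_f(K)=0,\mu_{\Theta+mf}(K)\leq 0)$, and an additivity argument shows that every nonzero quotient of $A$ has positive lex slope, so $A$ itself satisfies (2)(c) and lies in $\Tc^l$).

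The main obstacle is this final inductive step. The natural first attempt — passing to the first $\mu_{\Theta+mf}$-HN factor of $A$ — can fail because that factor, being a subsheaf of a $\mu_f$-semistable sheaf of slope zero, may have strictly negative $\mu_f$-slope and thus lie outside $\Tc^l$. The dichotomy above circumvents this by searching instead for a proper subsheaf of $\mu_f$-slope zero with positive $\mu_{\Theta+mf}$, recursing on rank until either such a subsheaf is found or else the "no-such-subsheaf" scenario forces $A$ itself into $\Tc^l$ directly.
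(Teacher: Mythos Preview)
Your approach is correct and genuinely different from the paper's. The paper does not argue with slopes directly: it records the sign constraint $f\ch_1\geq 0$ on $W_{0,\whPhi}$ and $f\ch_1\leq 0$ on $W_{1,\whPhi}$ from \cite{Lo7}, and then imports the decomposition argument of \cite[Lemma~4.6]{Lo14} with $\mu^\ast$ replaced by $\mu_{\Theta+mf}$, so the existence of $(\Tc^l,\Fc^l)$-decompositions is ultimately extracted from the Fourier--Mukai torsion pair $(W_{0,\whPhi},W_{1,\whPhi})$. Your route --- showing that condition~(2)(c) of Lemma~\ref{lem:TclFcldefinitions} is itself closed under extensions and quotients, applying Polishchuk's lemma, and then identifying $(\Tc^l)^\circ=\Fc^l$ by producing a $\Tc^l$-subsheaf inside any $F\notin\Fc^l$ --- is more elementary, needs no Fourier--Mukai input, and would transplant to other lexicographic limits of slope stability; the paper's route is tied to the fibration but dovetails with the WIT machinery used throughout.

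One small repair in your inductive step for Case~2b: ``proper subsheaf $K\subsetneq A$'' permits $\rank K=\rank A$ (with $A/K$ torsion), and then the induction on rank does not progress. Restrict the dichotomy to subsheaves with $0<\rank K<\rank A$. The ``yes'' branch then terminates; in the ``no'' branch your additivity argument for a nonzero quotient $A\twoheadrightarrow Q$ still goes through, since if $\rank Q=0$ then $\mu_f(Q)=\infty$, while if $\rank Q>0$ the kernel has $0<\rank K<\rank A$ (it is nonzero because $A$ is torsion-free and $Q\neq A$ would force $\rank K>0$; and if $K=0$ then $Q=A$ is handled directly) so the hypothesis applies. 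The base case $\rank A=1$ lands automatically in the ``no'' branch.
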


\begin{proof}
By \cite[Lemma 2.5]{Lo7}, we have
\[
  \begin{cases}
  f\ch_1(F) \geq 0 &\text{ if $F \in W_{0,\whPhi}$} \\
  f\ch_1(F) \leq 0 &\text{ if $F \in W_{1,\whPhi}$}
  \end{cases}.
\]
Armed with this observation,  the  argument  in the proof of \cite[Lemma 4.6]{Lo14} applies if we replace  $\mu^\ast$ by $\mu_{\Theta + mf}$ in that proof.
\end{proof}

\begin{lem}\label{lem:ZwtakesBwupperplane}
Fix any $\alpha >0$.  For any nonzero $F \in \Bc^l$, we have $Z_\omega (F) \in \mathbb{H}$ as $v \to \infty$ along the curve \eqref{eq:vhyperbolaequation}.
\end{lem}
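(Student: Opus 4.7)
The overall approach is to exploit the torsion pair $(\Tc^l,\Fc^l)$: every nonzero $F\in\Bc^l$ admits a $\Bc^l$-short exact sequence $0\to T\to F\to F'[1]\to 0$ with $T\in\Tc^l$ and $F'\in\Fc^l$.  Since $Z_\omega$ is additive on short exact sequences and $\mathbb{H}\cup\{0\}$ is a convex cone closed under addition in $\mathbb{C}$, it suffices to prove $Z_\omega(T)\in\mathbb{H}$ for every nonzero $T\in\Tc^l$ and $-Z_\omega(F')\in\mathbb{H}$ for every nonzero $F'\in\Fc^l$, each for $v$ sufficiently large along the volume section \eqref{eq:vhyperbolaequation}.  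Since $\Tc^l$ and $\Fc^l$ are defined as extension closures of coherent sheaves satisfying the concrete slope conditions (2)(c) and (1)(c) of Lemma \ref{lem:TclFcldefinitions}, I would further reduce to checking these two claims on such generating sheaves.

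Next I would handle the imaginary part by decomposing
\[
\omega\ch_1(\cdot)=u(\Theta+mf)\ch_1(\cdot)+v\,f\ch_1(\cdot),
\]
noting from the volume constraint \eqref{eq:vhyperbolaequation} that $u\to 0^+$ while $uv$ stays bounded as $v\to\infty$.  For a generator $T$ of $\Tc^l$: when $T$ is $0$-dimensional we simply have $Z_\omega(T)=-\ch_2(T)<0$ on the negative real axis; when $T$ is purely $1$-dimensional, $\omega\ch_1(T)>0$ because $\omega$ is ample and $\ch_1(T)$ effective; when $\ch_0(T)>0$, condition (2)(c) applied to $A=T$ forces either $f\ch_1(T)>0$ (so the $v\,f\ch_1(T)$ term dominates for $v\gg 0$) or $f\ch_1(T)=0$ together with $(\Theta+mf)\ch_1(T)>0$ (so $\omega\ch_1(T)=u(\Theta+mf)\ch_1(T)>0$).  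A parallel case analysis for a generator $F'$ of $\Fc^l$ using (1)(c) shows that $\omega\ch_1(F')<0$ for $v\gg 0$ whenever $f\ch_1(F')<0$, or whenever $f\ch_1(F')=0$ and $(\Theta+mf)\ch_1(F')<0$; in either case $\Im(-Z_\omega(F'))>0$.

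The main obstacle is the remaining degenerate subcase where $F'\in\Fc^l$ satisfies $f\ch_1(F')=0$ and $(\Theta+mf)\ch_1(F')=0$, so that $\omega\ch_1(F')=0$ and the imaginary part test is inconclusive.  To close this case I would invoke the Fourier-Mukai identity \eqref{eq:ooch1twistReZePhiE1relation}: by properties (ii) and (v) of \ref{para:TlFlproperties}, $F'$ is $\whPhi$-WIT$_1$ of positive rank, and the cohomological formulas in \ref{para:cohomFMTformulas} then force $\ch_0(\wh{F'})=0$ and $f\ch_1(\wh{F'})=\ch_0(F')>0$, so $\wh{F'}$ is a genuine $1$-dimensional sheaf with effective $\ch_1$; ampleness of $\oo$ gives $\oo\ch_1(\wh{F'})>0$, and \eqref{eq:ooch1twistReZePhiE1relation} yields $\Re Z_\omega(F')=\tfrac{\alpha}{\beta}\oo\ch_1(\wh{F'})>0$, placing $-Z_\omega(F')$ on the negative real axis of $\mathbb{H}$.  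This subcase is the crucial one because slope data alone cannot detect the sign of $\Re Z_\omega(F')$, and the argument must pass through the Fourier-Mukai dual, where geometric positivity from the fiber-positive support of $\wh{F'}$ supplies the missing input.
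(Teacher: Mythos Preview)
Your argument is correct, but it takes a substantially different and longer route than the paper's proof.  The paper simply observes that by the alternative description of $\Tc^l$ and $\Fc^l$ in \ref{para:TlFlproperties} (namely $\Tc^l=\{F:F\in\Tc_\omega\text{ for all }v\gg 0\}$ and likewise for $\Fc^l$), every nonzero $F\in\Bc^l$ actually lies in $\Bc_\omega$ once $v$ is large enough along \eqref{eq:vhyperbolaequation}; since Arcara--Bertram have already shown that $(\Bc_\omega,Z_\omega)$ is a Bridgeland stability condition for any ample $\omega$, the conclusion $Z_\omega(F)\in\mathbb{H}$ is immediate.  No case analysis, no Fourier--Mukai input.

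Your approach, by contrast, bypasses the Arcara--Bertram black box and verifies the stability-function property by hand.  The imaginary-part analysis via the decomposition $\omega\ch_1=u(\Theta+mf)\ch_1+v\,f\ch_1$ is straightforward, and the interesting part is your treatment of the degenerate $\Fc^l$-case where $\omega\ch_1(F')=0$: here the Bogomolov--Gieseker inequality (which is what underlies Arcara--Bertram's argument) is replaced by the Fourier--Mukai identity \eqref{eq:ooch1twistReZePhiE1relation} together with the observation that $\wh{F'}$ is an honest $1$-dimensional sheaf with positive fiber degree.  This is exactly the mechanism the paper exploits later in \ref{para:phasesofobjects}(4) and (5), so your proof nicely anticipates that computation.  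Two minor remarks: the $\Bc^l$-short exact sequence should read $0\to F'[1]\to F\to T\to 0$ (with $\Fc^l[1]$ as the subobject), though this is immaterial since you only use additivity of $Z_\omega$; and the reduction to ``generators'' is in fact unnecessary, since Lemma~\ref{lem:TclFcldefinitions} shows every object of $\Tc^l$ (resp.\ $\Fc^l$) already satisfies condition (2)(c) (resp.\ (1)(c)).
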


\begin{proof}
Part of the proof of $(\Bc_\omega, Z_\omega)$ being a Bridgeland stability condition on $D^b(X)$ \cite[Corollary 2.1]{ABL} asserts that $Z_\omega (F) \in \mathbb{H}$ for any nonzero object $F \in \Bc_\omega$.  This lemma thus follows from the characterisations of $\Tc^l, \Fc^l$ in Lemma \ref{lem:TclFcldefinitions}.
\end{proof}

\paragraph[$Z^l$-stability] \label{para:limit_Bridgeland_stability} We can now define a `limit Bridgeland stability' as follows.  By Lemma \ref{lem:ZwtakesBwupperplane}, for any nonzero object $F \in \Bl$ we know that $Z_\omega (F)$ lies in the upper half plane $\mathbb{H}$ for $v \gg 0$ when $v,u$ lie on the  curve \eqref{eq:vhyperbolaequation}, i.e.\
\[
   (m-\tfrac{e}{2})u^2 + uv =  \alpha+m - e.
\]
We can then define a function germ $\phi (F) : \RR \to (0,1]$ for $v \gg 0$ via the relation
\[
  Z_\omega (F) \in \mathbb{R}_{>0} e^{i \pi \phi (F)(v)} \text{\quad for $v \gg 0$}.
\]
Although $u$ is only an implicit function in $v$ under the constraint \eqref{eq:vhyperbolaequation}, by requiring $u>0$ we can write $u$ as a function in $v$ for $v \gg 0$, in which case $O(u)=O(\tfrac{1}{v})$ as $v \to \infty$.  In fact, we can solve for $u$ as a Laurent series in $\tfrac{1}{v}$ (see \cite[10.4]{Lo20}); alternatively, we can rewrite $Z_\omega (F)$ as  a Laurent polynomial in an indeterminate $v'$ that has the same order of magnitude as $v \to \infty$ (see Remark \ref{rem:changeofvar} below).  When we express $u$ as a Laurent series in $\tfrac{1}{v}$, a further change of variable converts the series to another series whose coefficients have a closed-form formula involving Catalan numbers, allowing us to compute the radius of convergence of the Laurent series \cite{LoWong}.

In any case, we can define a notion of stability as in the case of Bayer's polynomial stability \cite{BayerPBSC}: We say $F$ is $Z^l$-stable (resp.\ $Z^l$-semistable) if, for every $\Bl$-short exact sequence
\[
  0 \to M \to F \to N \to 0
\]
where $M,N \neq 0$, we have
\[
  \phi (M) < \phi (N) \text{ for $v \gg 0$}
\]
(resp.\ $\phi (M) \leq \phi (N)$ for $v \gg 0$).  We will usually write $\phi (M) \prec \phi (N)$ (resp.\ $\phi (M) \preceq \phi (N)$) to mean $\phi (M) < \phi (N)$ for $v \gg 0$ (resp.\ $\phi (M) \leq \phi (N)$ for $v \gg 0$).

\begin{lem}\label{lem:Briminiwallboundedness}
Suppose that there is an object $F \in D^b(X)$ and some  $v_0>0$ such that for all $v >v_0$ along the curve \eqref{eq:vhyperbolaequation} we have that $F\in\Bc_\omega$ and is $Z_\omega$-(semi)stable. Then  $F\in\Bc^l$ and is $Z^l$-(semi)stable.
\end{lem}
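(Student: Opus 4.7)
My plan has two stages: first show $F \in \Bc^l$, then compare $Z^l$-phases via $Z_\omega$-phases for large $v$.

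First, I would unpack the hypothesis sheaf-theoretically. Since $F \in \Bc_\omega$ for every $v > v_0$ along the volume section, the cohomology sheaves of $F$ with respect to the standard t-structure satisfy $\mathcal{H}^0(F) \in \Tc_\omega$ and $\mathcal{H}^{-1}(F) \in \Fc_\omega$ for every such $v$, and vanish in all other degrees. Using the equivalent description of $\Tc^l$ and $\Fc^l$ given in \ref{para:TlFlproperties} (coherent sheaves that lie in $\Tc_\omega$, respectively $\Fc_\omega$, for all $v \gg 0$ along \eqref{eq:vhyperbolaequation}), we immediately conclude $\mathcal{H}^0(F) \in \Tc^l$ and $\mathcal{H}^{-1}(F) \in \Fc^l$, hence $F \in \Bc^l = \langle \Fc^l[1], \Tc^l\rangle$.

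Second, I would check $Z^l$-(semi)stability. Take a $\Bc^l$-short exact sequence
\[
0 \to M \to F \to N \to 0
\]
with $M, N \neq 0$, and view it as a distinguished triangle $M \to F \to N \to M[1]$ in $D^b(X)$. Applying the same argument as in the first stage to $M$ and $N$ separately: the cohomology sheaves $\mathcal{H}^0(M), \mathcal{H}^0(N)$ lie in $\Tc^l$ and $\mathcal{H}^{-1}(M), \mathcal{H}^{-1}(N)$ lie in $\Fc^l$, so by the same equivalent description there is some $v_1 \geq v_0$ such that $M, N \in \Bc_\omega$ for all $v > v_1$. Since $\Bc_\omega$ is the heart of a bounded t-structure on $D^b(X)$ and all three vertices $M, F, N$ of the triangle lie in $\Bc_\omega$ for such $v$, the triangle is also a short exact sequence in $\Bc_\omega$.

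Finally, by the $Z_\omega$-(semi)stability of $F$ in $\Bc_\omega$ for all $v > v_0$, we have $\phi_\omega(M) < \phi_\omega(N)$ (respectively $\phi_\omega(M) \leq \phi_\omega(N)$) for every $v > v_1$. Since the phase function used in \ref{para:limit_Bridgeland_stability} to define $Z^l$-stability is literally the same phase function $\phi$ computed from $Z_\omega$ (for $v$ large enough that $Z_\omega(M), Z_\omega(N) \in \mathbb{H}$, which holds by Lemma \ref{lem:ZwtakesBwupperplane}), this translates directly to $\phi(M) \prec \phi(N)$ (respectively $\phi(M) \preceq \phi(N)$), giving $Z^l$-(semi)stability of $F$. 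The only potential obstacle I foresee is the verification that a $\Bc^l$-SES with all terms also in $\Bc_\omega$ is automatically a $\Bc_\omega$-SES, but this is immediate from the general fact that a distinguished triangle whose three vertices lie in the heart of a bounded t-structure is a short exact sequence in that heart.
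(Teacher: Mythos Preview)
Your proposal is correct and is exactly what the paper has in mind: the paper's own proof consists of the single sentence ``This follows easily from the definitions of $\Tc^l$ and $\Fc^l$,'' and you have faithfully unpacked those definitions via the equivalent descriptions in \ref{para:TlFlproperties}. The only minor quibble is that the phrase ``applying the same argument as in the first stage'' slightly obscures that in the second stage you are using the equivalence in \ref{para:TlFlproperties} in the reverse direction (from $\Bc^l$ to $\Bc_\omega$ for $v\gg 0$), but the substance is right.
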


\begin{proof}
This follows easily from the definitions of $\Tc^l$ and $\Fc^l$.  (See also \cite[Lemma 7.1]{Lo14})
\end{proof}

\begin{rem}\label{rem:changeofvar}
If we make a change of variables via the `shear matrix'
\[
 \begin{pmatrix} v' \\ u' \end{pmatrix} = \begin{pmatrix} 1 & m-\tfrac{e}{2} \\ 0 & 1 \end{pmatrix} \begin{pmatrix} v \\ u \end{pmatrix}
\]
then the relation \eqref{eq:vhyperbolaequation} can be rewritten as
\[
  m + \alpha = u'v' + e
\]
while $\omega$ can be rewritten as $\omega = u'(\Theta + \tfrac{e}{2}f) + v'f$.  Then $Z_\omega (F)$ is a Laurent polynomial in $v'$, and $Z^l$-stability can equivalently be defined by letting $v' \to \infty$, in which case $Z^l$-stability is indeed a polynomial stability in the sense of Bayer.  Nonetheless, we will use the coordinates $(v,u)$ instead of $(v',u')$ in the rest of this article.
\end{rem}

In the computations that follow, it will be convenient to introduce the following subcategories of $\Tc^l, \Fc^l$:
\begin{align}
  \Tc^{l,+} &= \langle F \in \Coh^{=2}(X) : F \text{ is $\mu_f$-semistable}, \mu_f (F) > 0 \rangle, \notag\\
  \Tc^{l,0} &= \{ F \in \Tc^l : F \text{ is $\mu_f$-semistable}, \mu_f (F)=0 \}, \notag\\
  \Fc^{l,0} &= \{ F \in \Fc^l : F \text{ is $\mu_f$-semistable}, \mu_f (F) =0 \}, \notag\\
  \Fc^{l,-} &= \langle F \in \Coh^{=2}(X) : F \text{ is $\mu_f$-semistable}, \mu_f (F) < 0 \rangle.\label{para:Bltorsiontriplequintuple}
\end{align}
For the same reason as in \cite[Remark 4.8(iii)]{Lo14}, we have the inclusion $\Tc^{l,0} \subset W_{1,\whPhi}$.  Since $W_{0,\whPhi} \subset \Tc^l$ from \ref{para:TlFlproperties}(iii), we have the torsion triple in $\Bl$
\begin{equation}\label{eq:Bltorsiontriple}
  (\Fc^l [1], \,\, W_{0,\whPhi},\,\, W_{1,\whPhi} \cap \Tc^l ),
\end{equation}
which is an analogue of \cite[(4.12)]{Lo14}.  Also, by considering the $\mu_f$-HN filtrations of objects in $\Fc^l$ and $\Tc^l$, we obtain the torsion quintuple in $\Bc^l$
\begin{equation}\label{eq:Bltorsionquintuple}
  (\Fc^{l,0}[1],\,\, \Fc^{l,-}[1],\,\, \Coh^{\leq 1}(X),\,\, \Tc^{l,+},\,\, \Tc^{l,0} ),
\end{equation}
which is an analogue of \cite[(4.13)]{Lo14}.

\paragraph[The category $W_{1,\whPhi} \cap \Tc^l$] From \label{para:W1xTcldecomposition} the torsion  quintuple \eqref{eq:Bltorsionquintuple}, we see that for every object $F \in W_{1,\whPhi} \cap \Tc^l$, the $\Tc^{l,+}$-component must be zero, or else  such a component would contribute a positive intersection number $f\ch_1$; this implies that  $F$ has a two-step filtration $F_0 \subseteq F_1 = F$ in $\Coh (X)$ where $F_0 \in W_{1,\whPhi} \cap \Coh^{\leq 1}(X)$ and is thus a $\whPhi$-WIT$_1$ fiber sheaf, while $F_1/F_0 \in \Tc^{l,0}$.  Since $f\ch_1$ is zero for both $F_0$ and $F_1/F_0$, the transform $\whPhi F[1]$ must be a torsion sheaf.

\paragraph[Transforms of torsion-free sheaves] The \label{para:torsionfreeshtransf}  torsion triple \eqref{eq:Bltorsiontriple} in $\Bl$ is taken by $\whPhi$ to the torsion  triple
\[
  (\wh{\Phi} \Fc^l [1], W_{1,\Phi}, \wh{\Phi} (W_{1,\whPhi} \cap \Tc^l) )
\]
in the abelian category $\wh{\Phi} \Bl$.  This implies that the heart $\wh{\Phi} \Bl [1]$ is a tilt of $\Coh (X)$ with respect to the torsion pair $(\mathcal T, \mathcal F)$ where
\begin{align*}
  \mathcal T &= \wh{\Phi} (W_{1,\whPhi} \cap \Tc^l) [1], \\
  \mathcal F &= \langle \wh{\Phi} \Fc^l [1], W_{1,\Phi} \rangle.
\end{align*}
By \ref{para:W1xTcldecomposition}, we know $\mathcal T \subseteq \Coh^{\leq 1}(X)$.  Consequently, for every torsion-free sheaf $E$ on $X$ we have $E \in \mathcal F \subset \wh{\Phi} \Bl$, which implies ${\Phi} E [1]\in \Bl$.

\paragraph[Phases of objects] We \label{para:phasesofobjects} analyse the phases of various objects in $\Bl$ with respect to $Z^l$-stability.   Note that if $F \in D^b(X)$ satisfies
\begin{align}
    \tilde{n} &= \ch_0(F), \notag\\
  \tilde{d} = f\ch_1(F)&, \,\,\text{\qquad}  \tilde{c} = \Theta\ch_1(F), \notag\\
  \tilde{s} &= \ch_2(F) \label{eq:chFnotation}
\end{align}
then
\begin{align*}
  Z_\omega (F) &= -\ch_2(F) + \tfrac{\omega^2}{2}\ch_0(F) + i \omega \ch_1(F) \\
  &= -\tilde{s} + \left( (m-\tfrac{e}{2})u^2 + uv \right) \tilde{n} + i  ( u(\tilde{c} + m\tilde{d}) + v\tilde{d}) \\
  &= -\tilde{s} + (\alpha + (m-e))\tilde{n} + i ( u(\tilde{c} + m\tilde{d}) + v\tilde{d}) \text{ under the constraint \eqref{eq:vhyperbolaequation}}.
\end{align*}
Now further assume $F$ is a nonzero object of $\Bc^l$.  Consider the following scenarios:
\begin{itemize}
\item[(1)] $F \in \Coh^{\leq 0}(X)$.  Then $\ch_2(F)>0$, and so $Z_\omega (F) \in \mathbb{R}_{<0}$, giving $\phi (F)=1$.
\item[(2)] $F \in \Coh^{\leq 1}(X)$ and $\dimension F = 1$.  Then $\tilde{n}=0$.  We have $\tilde{d} =f\ch_1(F) \geq 0$ in this case.
    \begin{itemize}
    \item[(2.1)] If $\tilde{d}>0$, then $\phi (F) \to \tfrac{1}{2}$.
    \item[(2.2)] If $\tilde{d}=0$, then the effective divisor $\ch_1(F)$  is a positive multiple of the fiber class $f$, and so $(\Theta + mf)\ch_1(F)=\Theta \ch_1(F) = \tilde{c} > 0$, i.e.\ $\Im Z_\omega (F) = u\tilde{c}>0$.
        \begin{itemize}
        \item[(2.2.1)] If $\tilde{s} > 0$ then $\phi (F) \to 1$.
        \item[(2.2.2)] If $\tilde{s} = 0$ then $\phi (F)=\tfrac{1}{2}$.
        \item[(2.2.3)] If $\tilde{s}<0$ then $\phi (F) \to 0$.
        \end{itemize}
    \end{itemize}
\item[(3)] $F \in \Coh^{=2}(X)$ and $f\ch_1(F)=\tilde{d}>0$.  Then $\phi (F) \to \tfrac{1}{2}$.
\item[(4)] $F \in \Tc^{l,0}$.  From the definition of $\Tc^{l,0}$, we have $\tilde{d}=f\ch_1(F)=0$ while $(\Theta + mf)\ch_1(F) >0$; we also know $F$ is $\wh{\Phi}$-WIT$_1$ because $F \in \Fc^l \subset W_{1,\wh{\Phi}}$.  Thus $\wh{F}=\wh{\Phi} F [1]$ is a sheaf of rank zero, and so $\omega \ch_1 (\wh{F})$ must be strictly positive (if $\omega \ch_1(\wh{F})=0$, then $\wh{F}$ would be supported in dimension 0, implying $F$ itself is a fiber sheaf, a contradiction).  Thus from the discussion in \ref{para:ch1realZwcomparison} we know
    \[
    0 < -\Re Z_\omega ({\Phi} \wh{F} [1]) =  \Re Z_\omega (F)
    \]
    and hence $\phi (F) \to 0$.
\item[(5)] $F = A[1]$ where $A \in \Fc^{l,0}$.  Then $f\ch_1(A)=0$ and $(\Theta + mf)\ch_1(A) \leq 0$.  In this case, $A$ is $\wh{\Phi}$-WIT$_1$ by \ref{para:TlFlproperties}(v).  By a similar computation as in (4), we have
    \[
      0 < -\Re Z_\omega ({\Phi} \wh{A} [1])=-\Re Z_\omega (A[1]) = -\Re Z_\omega (F)
    \]
    and so $\phi (F) \to 1$.
\item[(6)] $F = A[1]$ where $A \in \Fc^{l,-}$.  Then $f\ch_1(A)<0$, i.e.\ $f\ch_1(F)>0$.  Hence $\phi (F) \to \tfrac{1}{2}$.
\end{itemize}

\paragraph[Summary] We  \label{para:summarization-diagram}  summarise the constructions in  this section in the following diagram, where a wave type arrow with a pair $(\Tc,\Fc)$ means that (i) such pair is a torsion pair in the source heart and (ii)  the target heart is the tilt at such torsion pair, i.e. the target heart is $\langle \Fc[1], \Tc\rangle$.
\begin{displaymath}
    \xymatrix{ \Coh(X) \ar@<1ex>[rrr]^{\Phi[1]\quad \cong} & & & \Phi(\Coh(X))[1] \ar@<1ex>[lll]^{\wh{\Phi}\quad \cong} & & \\
    & & & & &\Coh(X) \ar@{~>}[llu]|{(W_{0,\whPhi},W_{1,\whPhi})}   \ar@{~>}[lld]|{(\Tc^l,\Fc^l)}
   \ar@{~>}[d]|{(\Tc_\omega,\Fc_\omega)}
    \\
              \wh{\Phi}\Bl \ar@{~>}[uu]|{(\langle  \wh{\Phi} \Fc^l [1], W_{1,\Phi} \rangle, \wh{\Phi} (W_{1,\whPhi} \cap \Tc^l) )}  \ar@<1ex>[rrr]^{\Phi[1]\quad \cong} & & &\Bl  \ar@<1ex>[lll]^{\wh{\Phi}\quad \cong} \ar@{~>}[uu]|{(\langle \Fc^l [1], W_{0,\whPhi}\rangle, W_{1,\whPhi} \cap \Tc^l )} & &\Bc_\omega \ar@{-->}[ll]^{\txt{\tiny limit along curve \\ \tiny (\ref{eq:vhyperbolaequation}) as $v\to\infty$\\
             \tiny  or (\ref{eq:curve-in-lambda-q-plane}) as $\lambda\to 0^+$}}
              }
\end{displaymath}

\section{Slope stability vs limit Bridgeland stability}\label{sec:maintheorem}

Given any torsion-free sheaf $E$ on $X$, we saw in  \ref{para:torsionfreeshtransf} that $\Phi E [1]$ lies in the heart $\Bl$.  In this section, we establish a comparison between   $\mu_\oo$-stability on $E$ and  $Z^l$-stability on the shifted transform $\Phi E [1]$ in the form of Theorem \ref{thm:Lo14Thm5-analogue}, where $\oo$ is taken as \eqref{eq:oonotation}, and $\omega$ is taken as \eqref{eq:omeganotation}.  This theorem is the surface analogue of  \cite[Theorem 5.1]{Lo14}:

\begin{thm}\label{thm:Lo14Thm5-analogue}
Let $p : X \to B$ be a Weierstra{\ss} elliptic surface with base curve $B$.
\begin{itemize}
\item[(A)]  Take B-field $B = \tfrac{e}{2}f$. Suppose $E$ is a $\mu_\oo$-stable torsion-free sheaf on $X$.
\begin{itemize}
\item[(A1)] If $\oo \ch_1^B(E)>0$, then $\Phi E[1]$ is a $Z^l$-stable object in $\Bl$.
\item[(A2)] If $\oo \ch_1^B(E)=0$, then $\Phi E[1]$ is a $Z^l$-semistable object in $\Bl$, and the only $\Bl$-subobjects $G$ of $\Phi E[1]$ where $\phi (G)=\phi (\Phi E [1])$ are objects in  $\Phi (\Coh^{\leq 0}(X))$.
\item[(A3)] If $E$ is locally free, then $\Phi E[1]$ is a $Z^l$-stable object in $\Bl$.
\end{itemize}
\item[(B)] Suppose $F \in \Bl$ is a $Z^l$-semistable object with $f\ch_1(F) \neq 0$, and $F$ fits in the $\Bl$-short exact sequence (which exists by  \eqref{eq:Bltorsiontriple})
\[
 0 \to F' \to F \to F'' \to 0
\]
where $F' \in \langle \Fc^l[1], W_{0,\whPhi} \rangle$ and $F'' \in \langle W_{1,\whPhi} \cap \Tc^l\rangle$.  Then $\whPhi F'$ is a $\mu_\oo$-semistable torsion-free sheaf on $X$.
\end{itemize}
\end{thm}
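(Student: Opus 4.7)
The plan is to prove part (A) by contradiction. Assume $\Phi E[1]$, which lies in $\Bc^l$ by \ref{para:torsionfreeshtransf}, admits a destabilising $\Bc^l$-subobject $G \hookrightarrow \Phi E[1]$ with $\phi(G) \succeq \phi(\Phi E[1])$ (taking the inequality strict for the stability assertions in (A1) and (A3), and non-strict for semistability in (A2)). Form the short exact sequence $0 \to G \to \Phi E[1] \to H \to 0$ in $\Bc^l$ and apply the equivalence $\whPhi$, landing in the tilted heart $\wh\Phi \Bc^l$ described in \ref{para:torsionfreeshtransf}. Since $\whPhi(\Phi E[1]) = E$, this yields a short exact sequence $0 \to \whPhi G \to E \to \whPhi H \to 0$ in $\wh\Phi \Bc^l$. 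Taking $\Coh(X)$-cohomology of this triangle, and using that $E$ is a sheaf lying in the torsion-free part $\mathcal F$ of $\wh\Phi\Bc^l$, one extracts a four-term exact sequence whose first three terms identify $E' := \mathcal{H}^0(\whPhi G)$ as an honest subsheaf of $E$, with $\mathcal{H}^1(\whPhi H)=0$.

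The next step is a phase-to-slope translation. The identity \eqref{eq:ooch1twistReZePhiE1relation} rewrites $\bar\omega \ch_1^B$ of any sheaf as a negative scalar multiple of $\Re Z_\omega$ of its $\Phi[1]$-transform, so the phase ordering $\phi(G) \succeq \phi(\Phi E[1])$ together with the asymptotic computations of $Z_\omega$ for the torsion pieces of $\Bc^l$ (\ref{para:phasesofobjects}) forces the inequality $\mu_{\bar\omega}(E') \geq \mu_{\bar\omega}(E)$ on the sheaf side. The $\mu_{\bar\omega}$-stability of $E$ then leaves only the endpoint cases $E' = 0$ or $E' = E$. In the first, $\whPhi G$ is concentrated in degree one and, via the torsion triple \eqref{eq:Bltorsiontriple}, corresponds to an object of $\whPhi(W_{1,\whPhi} \cap \Tc^l)[1]$; working back through the torsion quintuple \eqref{eq:Bltorsionquintuple} and the phases listed in \ref{para:phasesofobjects}, any such $G$ with $\phi(G) = \phi(\Phi E[1])$ must in fact be a fiber sheaf of the form $\Phi K$ for $K \in \Coh^{\leq 0}(X)$, giving (A2). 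For (A1), the hypothesis $\bar\omega \ch_1^B(E)>0$ shifts the reference phase off the critical value so that the equal-phase subobjects identified above cease to satisfy the phase ordering, yielding $Z^l$-semistability. For (A3), local freeness of $E$ gives $\Hom(\Phi K, \Phi E[1]) = \Ext^1(K, E) = 0$ for any $K \in \Coh^{\leq 0}(X)$, so the only source of equal-phase subobjects identified in (A2) disappears.

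For part (B), the plan reverses this direction. Given the decomposition $0 \to F' \to F \to F'' \to 0$ in $\Bc^l$ from \eqref{eq:Bltorsiontriple}, apply $\whPhi$; since $F' \in \langle \Fc^l[1], W_{0,\whPhi}\rangle$ and each of these subcategories is sent by $\whPhi$ into $\Coh(X)$, we obtain that $\whPhi F'$ is a genuine coherent sheaf, and the hypothesis $f\ch_1(F)\neq 0$ guarantees positive rank. Torsion-freeness of $\whPhi F'$ follows by observing that any torsion subsheaf would, after applying $\Phi[1]$, produce a $\Bc^l$-subobject of $F'$ with phase strictly exceeding that of $F'$, and hence via the filtration of $F$, contradicting $Z^l$-semistability of $F$. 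Finally, $\mu_{\bar\omega}$-semistability of $\whPhi F'$ is established by the same contrapositive argument combined with \eqref{eq:ooch1twistReZePhiE1relation}: a hypothetical subsheaf of strictly larger $\mu_{\bar\omega}$ transforms into a $\Bc^l$-subobject of $F'$ of strictly larger phase along the volume section, again contradicting the hypothesis on $F$.

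The main obstacle I anticipate is the bookkeeping around asymptotic phase comparisons. While \eqref{eq:ooch1twistReZePhiE1relation} cleanly handles the real part of $Z_\omega$, determining the imaginary part as $v\to\infty$ along the volume section \eqref{eq:vhyperbolaequation} requires separating the leading-order term in $v$ (dominated by the rank on the $\Phi[1]$-transformed side) from the subleading $O(u)$ terms; converting the formal inequality $\phi(G) \succeq \phi(\Phi E[1])$ into an honest slope inequality on the $E$-side, with the correct strictness, is the key technical point. Handling objects whose components straddle multiple pieces of the torsion quintuple \eqref{eq:Bltorsionquintuple} simultaneously — in particular, the interplay between $\Tc^{l,0}$ and $\Fc^{l,0}$, whose phases approach $0$ and $1$ respectively — will require the filtrations to be peeled off in the correct order so that each piece is compared against the $1/2$-phase of $\Phi E[1]$ unambiguously.
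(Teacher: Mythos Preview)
Your overall architecture matches the paper's: apply $\whPhi$ to a $\Bc^l$-short exact sequence $0\to G\to \Phi E[1]\to H\to 0$, extract the sheaf $E':=\whPhi^0 G\hookrightarrow E$, and compare phases via \eqref{eq:ooch1twistReZePhiE1relation}. The gap is precisely where you flag it, but it is not merely bookkeeping---it is the substance of the proof.

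The implication ``$\phi(G)\succeq\phi(\Phi E[1])\ \Rightarrow\ \mu_{\oo}(E')\geq\mu_{\oo}(E)$'' does not follow from \eqref{eq:ooch1twistReZePhiE1relation} alone. That identity relates $\mu_{\oo,B}(E')$ to $\Re Z_\omega(\Phi E'[1])=\Re Z_\omega\bigl(\Phi(\whPhi^0 G)[1]\bigr)$, \emph{not} to $\Re Z_\omega(G)$. The object $G$ sits in the $\Bc^l$-exact sequence
\[
0\to \Phi(\whPhi^0 G)[1]\to G\to \Phi(\whPhi^1 G)\to 0,
\]
and $Z_\omega(G)$ is the sum of the two outer terms. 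The $\whPhi$-WIT$_1$ piece $\Phi(\whPhi^1 G)\in W_{1,\whPhi}\cap\Tc^l$ can have $\Re Z_\omega$ of either sign a priori and contributes nothing to $E'$. You therefore cannot read $\mu_{\oo}(E')$ off from $\phi(G)$ without first controlling this piece. The paper handles this by a forward case analysis: it shows directly, using \ref{para:W1xTcldecomposition} and \ref{para:phasesofobjects}, that $\Phi(\whPhi^1 G)$ always has $\phi\to 0$ (or contributes $O(\tfrac{1}{v})$), so it can only \emph{lower} the phase of $G$. Only after establishing this can one conclude that $\mu_{\oo,B}(E')<\mu_{\oo,B}(E)$ forces $\phi(G)\prec\phi(F)$. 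Your contrapositive formulation needs exactly the same case analysis; it cannot be bypassed.

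Your dichotomy ``$E'=0$ or $E'=E$'' is also too coarse. Slope stability of $E$ only rules out $0<\rank E'<\rank E$; the case $\rank E'=\rank E$ with $E'\neq E$ (i.e.\ $\image\alpha$ a nonzero torsion sheaf) survives and is treated separately in the paper (its case~(b)), where one must further split on whether $\ch_1(\image\alpha)=0$. Similarly, when $E'=0$ one needs a further argument to force $G\in\Phi(\Coh^{\leq 0}(X))$ rather than merely $G\in W_{1,\whPhi}\cap\Tc^l$; this is where the paper splits on the $\Tc^{l,0}$-component $A''$ and on $\ch_2$ of the fiber part $A'$.

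For part~(B), your outline is correct in spirit but again underestimates what is needed. Torsion-freeness of $\whPhi F'$ requires the vanishing $\Hom(\Phi\Coh^{\leq 1}(X)[1],F')=0$, and a torsion subsheaf of $\whPhi F'$ need not transform to something with phase strictly exceeding $\tfrac{1}{2}$ without the auxiliary result (Lemma~\ref{lem:Lo14Lem5-7analogue}) that the relevant category $\Ac_1$ is closed under $\Bc^l$-quotients and has $\phi\to 1$. Likewise, the semistability step needs $F''\in\Phi(\Coh^{\leq 0}(X))$ (Lemma~\ref{lem:Lo14Lem5-9analogue}) so that $\mu_{\oo}(\whPhi F')=\mu_{\oo}(\whPhi F)$; otherwise the comparison $\phi(\Phi B[1])\preceq\phi(F)$ does not yield $\mu_{\oo}(B)\leq\mu_{\oo}(\whPhi F')$.
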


Note that the objects of $\Phi (\Coh^{\leq 0}(X))$ are precisely direct sums of semistable fiber sheaves of degree 0.

Even though the proof of Theorem \ref{thm:Lo14Thm5-analogue} is analogous to that of \cite[Theorem 5.1(A)]{Lo14}, we include most of the details for ease of reference, and also to lay out explicitly the necessary changes to the proof of \cite[Theorem 5.1]{Lo14}.

\begin{proof}[Proof of Theorem \ref{thm:Lo14Thm5-analogue}(A)]
Let us write $F = \Phi E [1]$  throughout the proof.  Since $\rank (E) \neq 0$, we have $\phi (F) \to \tfrac{1}{2}$.  Take any $\Bl$-short exact sequence
\begin{equation}
0 \to G \to F \to F/G \to 0
\end{equation}
where $G \neq 0$.  This yields a long exact sequence of sheaves
\begin{equation}\label{eq:lem:Lo14Thm5-1A-analogue-eq2}
0 \to \wh{\Phi}^0 G \to E \overset{\alpha}{\to} \wh{\Phi}^0 (F/G) \to \wh{\Phi}^1 G \to 0
\end{equation}
and we see $\whPhi^1 (F/G)=0$.  From the torsion triple \eqref{eq:Bltorsiontriple} in $\Bl$, we know $G$ fits in  the exact triangle
\[
  \Phi (\whPhi^0 G)[1] \to G \to \Phi (\whPhi^1 G) \to \Phi (\whPhi^0 G)[2]
\]
where $\Phi (\whPhi^0 G)[1] \in \langle \Fc^l [1], W_{0,\whPhi}\rangle$ is precisely the $\whPhi$-WIT$_0$ component of $G$, and $\Phi (\whPhi^1 G) \in W_{1,\whPhi} \cap \Tc^l$ the $\whPhi$-WIT$_1$ component of $G$.

\textbf{Suppose $\rank (\image \alpha) = 0$.} Then $\rank (\whPhi^0 G) = \rank E > 0$, and so $f\ch_1 (\Phi (\whPhi^0 G)[1])>0$.  Now we break into two cases:
\begin{itemize}
\item[(a)] $\ch_1 (\image \alpha) \neq 0$.  Then $\mu_{\oo,B} (\whPhi^0 G) < \mu_{\oo,B} (E)$, which implies $\phi (\Phi (\whPhi^0 G)[1]) \prec \phi (F)$.
    \begin{itemize}
    \item[(i)] If $\dimension \Phi (\whPhi^1 G)=2$:  from \ref{para:W1xTcldecomposition} we know $\Phi (\whPhi^1 G)$ fits in a short exact sequence of sheaves
        \begin{equation}\label{eq:PhiwhPhi1G-decomp}
          0 \to A' \to \Phi (\whPhi^1 G) \to A'' \to 0
        \end{equation}
        where $A' \in W_{1,\whPhi} \cap \Coh^{\leq 1}(X) \subset \Coh (\pi)_0$ and $A'' \in \Tc^{l,0}$.  Thus $f\ch_1(\Phi (\whPhi^1 G))=0$, and $Z_\omega (F)$ is dominated by its real part.  From the computation in \ref{para:ch1realZwcomparison}, we know $\Re Z_\omega (\Phi (\whPhi^1 G)) > 0$, and so $\phi (\Phi (\whPhi^1 G)) \to 0$, giving us $\phi (G) \prec \phi (F)$ overall.

    \item[(ii)] If $\dimension \Phi (\whPhi^1 G) \leq 1$: then the component $A''$ in (i) vanishes, and $\Phi (\whPhi^1 G) = A'$ is a $\whPhi$-WIT$_1$ fiber sheaf.  Then
        \[
          Z_\omega (\Phi (\whPhi^1 G))= -\bar{s} + i\bar{c}u
        \]
        where $\bar{s} = \ch_2(A') \leq 0$ while $\bar{c} = \Theta\ch_1(A')\geq 0$.

        If $\bar{s}<0$, then again we  have $\phi (G) \prec \phi (F)$.  On the other hand, if $\bar{s}=0$ then the order of magnitude of $Z_\omega (\Phi (\whPhi^1 G))$ as $v \to \infty$ is  $O(\tfrac{1}{v})$, and so we still have $\phi (G) \prec \phi (F)$ overall.
    \end{itemize}
\item[(b)] $\ch_1 (\image \alpha) =0$.  Then $\image \alpha \in \Coh^{\leq 0}(X)$, in which case $\ch_i (\whPhi^0 G) = \ch_i (E)$ for $i=0,1$.  From the cohomological Fourier-Mukai transform \eqref{eq:cohomFMT-Phi}, it follows that $\ch_0, f\ch_1$ and $\ch_2$ of
 $\Phi (\whPhi^0 G)[1]$ and $F$ agree; from \eqref{eq:ZwPhiEshift} we also see that  all the terms of  $Z_\omega ( \Phi (\whPhi^0 G)[1])$ and $Z_\omega(F)$ agree except the terms involving $u$.  As in (a)(i), we have a decomposition of $\Phi (\whPhi^1 G)$ of the form \eqref{eq:PhiwhPhi1G-decomp}.
    \begin{itemize}
    \item[(i)] If $\dimension \Phi (\whPhi^1 G)=2$: then $A'' \neq 0$, and we have $\Re Z_\omega (A'') > 0$ by \ref{para:ch1realZwcomparison} while $\Im Z_\omega (A'')$ has order of magnitude $O(\tfrac{1}{v})$.  On the other hand, $A'$ is a $\whPhi$-WIT$_1$ fiber sheaf and so $\Re Z_\omega (A') \geq 0$ while $\Im Z_\omega (A')$ also has order of magnitude $O(\tfrac{1}{v})$.  Overall, we have $\phi (G) \prec \phi (F)$.
    \item[(ii)] If $\dimension \Phi (\whPhi^1 G)\leq 1$: then $A''=0$ and $\Phi (\whPhi^1 G)=A'$ is a $\whPhi$-WIT$_1$ fiber sheaf with $\ch_2 (A') \leq 0$. With $\bar{s}, \bar{c}$ as in (a)(ii) above, we observe:
    \begin{itemize}
    \item If $\bar{s}<0$, then $\Re Z_\omega (\Phi (\whPhi^1 G))>0$ while $\Im Z_\omega (\Phi (\whPhi^1 G))$ has magnitude $O(\tfrac{1}{v})$, giving us  $\phi (G) \prec \phi (F)$ overall.
    \item If $\bar{s}=0$, then $\bar{c} \geq 0$ (with $\bar{c}=0$ iff $A'=0$) and $\whPhi^1 G \in \Coh^{\leq 0}(X)$.  Thus $\whPhi^0 (F/G)$ also lies in $\Coh^{\leq 0}(X)$ from the exact sequence \eqref{eq:lem:Lo14Thm5-1A-analogue-eq2}.  Since $F/G \in \Bl$, from the torsion triple \eqref{eq:Bltorsiontriple} in $\Bl$ we know $\whPhi^0 (F/G) \in \langle \whPhi \Fc^l [1], W_{1,\whPhi} \rangle$, i.e.\ $\whPhi^0 (F/G)$ is the extension of a sheaf in $W_{1,\whPhi}$ by a sheaf in $\whPhi \Fc^l [1]$.  However, every nonzero coherent sheaf in $\whPhi \Fc^l [1]$ has $f\ch_1 \neq 0$, and so must be supported in dimension at least 1.  Thus the $\whPhi \Fc^l [1]$-component of $\whPhi^0 (F/G)$ must  vanish, i.e.\ $\whPhi^0 (F/G)$ lies in $W_{1,\whPhi} \cap \Coh^{\leq 0}(X)$, which forces $\whPhi^0 (F/G)$ to be zero.  Then $F/G$ itself is zero, i.e.\ $G=F$.
    \end{itemize}
    \end{itemize}
\end{itemize}

\textbf{Suppose $\rank (\image \alpha) > 0$.} If $\whPhi^0 G \neq 0$ then $0 < \rank (\whPhi^0 G) < \rank (E)$ and so $\mu_{\oo,B} (\whPhi^0 G) < \mu_{\oo,B} (E)$, and so same argument as in part (a) above shows that $\phi (G) \prec \phi (F)$.  From now on, let us assume $\whPhi^0 G =0$, in which case we have the exact sequence of sheaves
\[
  0 \to E \to \whPhi^0 (F/G) \to \whPhi^1 G \to 0.
\]
Thus $G$ is a $\whPhi$-WIT$_1$ object, and from the torsion triple \eqref{eq:Bltorsiontriple} in $\Bl$ we see that $G$ must lie in $W_{1,\whPhi} \cap \Tc^l$.  As in case (a)(i) above, $G$ fits in a short exact sequence in $\Coh (X)$
\[
  0 \to A' \to G \to A'' \to 0
\]
where $A'$ is a $\whPhi$-WIT$_1$ fiber sheaf and $A'' \in \Tc^{l,0}$. We now divide into the following cases:
\begin{itemize}
\item $A'' \neq 0$: then we know $\Re Z_\omega (A'')$ is positive from \ref{para:ch1realZwcomparison} and is $O(1)$, while $\Im Z_\omega (A'')$ is $O(\tfrac{1}{v})$.  On the other hand, since $\ch_2(A') \leq 0$ we know $\Re Z_\omega (A')$ is nonnegative and $O(1)$,  while $\Im Z_\omega (A')$ is  $O(\tfrac{1}{v})$.  Overall, we have $\phi (G) \to 0$, giving us $\phi (G) \prec \phi (F)$.
\item $A''=0$ and $\ch_2 (A')<0$:  then $\phi (G) \to 0$  and we still have $\phi (G) \prec \phi (F)$.
\item $A''=0$ and $\ch_2 (A') =0$:  in this case $A' \in \Phi (\Coh^{\leq 0}(X))$ and so $\phi (G) = \tfrac{1}{2}$.  This is the most intricate of all the cases in this proof to treat, and we single out the following two  scenarios:
    \begin{itemize}
    \item[(S1)] If $\oo \ch_1^B(E)>0$: then $\Re Z_\omega (F)<0$ by \eqref{eq:ooch1twistReZePhiE1relation}, which gives $\phi (F) \succ \tfrac{1}{2} = \phi (G)$.  (Note that this is despite  $\phi (F) \to \tfrac{1}{2}$.)  Therefore, if $\oo \ch_1^B(E)>0$ then $\Phi E[1]$ is always $Z^l$-stable.  This proves statement (A1).
    \item[(S2)] If $\oo \ch_1^B(E)=0$: then $\Re Z_\omega (F)=0$, and $\phi (F) =\tfrac{1}{2} = \phi (G)$.  In this case, $\Phi E[1]$ is $Z^l$-semistable, and it would be strictly $Z^l$-semistable if and only if there exists a $\Bl$-subobject $G$ of $\Phi E[1]$ as in this case. This proves statement (A2).
    \end{itemize}

    Of course,   scenarios (S1) and (S2) above can be ruled out if we impose the vanishing $\Hom (\Phi (\Coh^{\leq 0}(X)),F)=0$, i.e.\ $\Hom (\Phi Q,F)=0$ for every $Q \in \Coh^{\leq 0}(X)$.  Note that for any $Q \in \Coh^{\leq 0}(X)$,
    \[
    \Hom (\Phi Q,F)=\Hom (Q, \whPhi F [1])=\Hom (Q,E[1])=\Ext^1 (Q,E).
    \]
    Hence $\Hom (\Phi (\Coh^{\leq 0}(X)),F)=0$ if and only if $\Ext^1 (Q,E)=0$ for every $Q \in \Coh^{\leq 0}(X)$, which in turn is equivalent to $E$ being a locally free sheaf by Lemma \ref{lem:surfaceshlocallyfreechar} below.  This proves statement (A3), and completes the proof of part (A).
\end{itemize}
\end{proof}

\begin{lem}\label{lem:surfaceshlocallyfreechar}
Suppose $E$ is a torsion-free sheaf $E$ on a smooth projective surface $X$.  Then $E$ is locally free if and only if $\Ext^1 (T,E)=0$ for every $T \in \Coh^{\leq 0}(X)$.
\end{lem}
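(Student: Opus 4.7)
The plan is to exploit the fact that on a smooth projective surface, a reflexive sheaf is locally free, so every torsion-free sheaf $E$ fits into a canonical short exact sequence
\[
0 \to E \to E^{\vee\vee} \to Q \to 0
\]
where $E^{\vee\vee}$ is locally free and $Q \in \Coh^{\leq 0}(X)$ is supported in dimension zero. Both implications will be extracted from this sequence together with standard homological input.

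For the forward direction ($E$ locally free $\Rightarrow \Ext^1(T,E)=0$), I would apply Serre duality on the smooth projective surface $X$, which gives
\[
\Ext^1(T,E) \cong \Ext^1(E, T\otimes \omega_X)^\ast.
\]
Since $E$ is locally free, $\EExt^q(E,-)=0$ for $q\geq 1$, so the local-to-global spectral sequence collapses to yield $\Ext^1(E, T\otimes \omega_X) \cong H^1(X, E^\vee \otimes T \otimes \omega_X)$. The sheaf $E^\vee \otimes T \otimes \omega_X$ has the same support as $T$ and is thus $0$-dimensional, so its $H^1$ vanishes. Alternatively, one could use the spectral sequence $H^p(\EExt^q(T,E)) \Rightarrow \Ext^{p+q}(T,E)$ directly together with the fact that $\EExt^q(T,\OO_X)=0$ for $q<2$ when $T$ has codimension $2$ on a smooth surface.

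For the converse direction, apply $\Hom(Q,-)$ to the sequence above to obtain the long exact sequence
\[
0 \to \Hom(Q,E) \to \Hom(Q, E^{\vee\vee}) \to \Hom(Q,Q) \to \Ext^1(Q,E) \to \cdots.
\]
Since $E^{\vee\vee}$ is torsion-free while $Q$ is torsion, $\Hom(Q,E^{\vee\vee})=0$, and the connecting map gives an injection $\Hom(Q,Q)\hookrightarrow \Ext^1(Q,E)$. Taking $T=Q\in \Coh^{\leq 0}(X)$ in the hypothesis forces $\Ext^1(Q,E)=0$, whence $\Hom(Q,Q)=0$ and hence $Q=0$. Therefore $E\cong E^{\vee\vee}$ is reflexive, and hence locally free on the smooth surface $X$.

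There is no real obstacle in this argument; it is entirely standard homological algebra on a surface. The only ingredients requiring care are the equivalence between reflexivity and local freeness on a smooth surface (which follows by a codimension argument applied to the non-locally-free locus of a reflexive sheaf) and the vanishing of $\EExt^q(E,-)$ for $E$ locally free that reduces the forward direction to cohomology of a $0$-dimensional sheaf.
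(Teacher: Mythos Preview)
Your proof is correct and follows essentially the same approach as the paper: both use the canonical sequence $0 \to E \to E^{\ast\ast} \to Q \to 0$ with $Q \in \Coh^{\leq 0}(X)$ for the converse (you spell out the connecting-map argument that the paper leaves implicit), and both use Serre duality plus vanishing of higher cohomology of a $0$-dimensional sheaf for the forward direction.
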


\begin{proof}
Consider the short exact sequence of sheaves
\[
0 \to E \to E^{\ast \ast} \to Q \to 0
\]
where $Q$ is necessarily a sheaf in $\Coh^{\leq 0}(X)$.  If $E$ is not locally free, then $Q \neq 0$ and we have  $\Ext^1 (Q,E) \neq 0$.  On the other hand, if $E$ is locally free then for any $T \in \Coh^{\leq 0}(X)$ we have $\Ext^1 (T,E)\cong \Ext^1 (E,T\otimes \omega_X) \cong H^1 (X,E^\ast \otimes T)=0$.
\end{proof}

\begin{proof}[Proof of Theorem \ref{thm:Lo14Thm5-analogue}(B)]
Let $F', F, F''$ be as in the statement of the theorem.  We begin by showing that $\whPhi F'$ is a torsion-free sheaf, i.e.\ $\Hom (\Coh^{\leq 1}(X), \whPhi F')=0$, i.e.\
\begin{equation}\label{lem:Lo14Thm5-1B-analogue-eq1}
\Hom (\Phi \Coh^{\leq 1}(X)[1], F')=0.
\end{equation}

Proceeding as in the proof of \cite[Lemma 5.8]{Lo14}, we observe
\begin{align*}
  \Phi \Coh^{\leq 1}(X)[1]  &\subset \langle \{ E \in W_{1,\whPhi} : f\ch_1(E)=0\}, \scalea{\gyoung(;;+,;;+)}[-1], \Coh^{\leq 0}(X)[-1]\rangle [1] \\
  &\subset \langle \Coh (X)[1], \scalea{\gyoung(;;+,;;+)}, \Coh^{\leq 0}(X) \rangle \\
  &\subset \langle \Bl[1], \Bl\rangle.
\end{align*}
Therefore, in order to prove the vanishing \eqref{lem:Lo14Thm5-1B-analogue-eq1}, it suffices to show the following two things:
\begin{itemize}
\item[(i)] For any $G \in W_{1,\whPhi}$ with $f\ch_1(G)=0$, we have $\Hom_{\Bl} (\mathcal{H}^0_{\Bl} (G[1]),F')=0$.
\item[(ii)] $\Hom (\langle\, \scalea{\gyoung(;;+,;;+)}, \Coh^{\leq 0}(X) \rangle, F' )=0$.
\end{itemize}

For (i), let us consider the $(\Tc^l,\Fc^l)$-decomposition of $G$ in $\Coh (X)$
\[
0 \to G' \to G \to G'' \to 0.
\]
This shows $\mathcal{H}^0_{\Bl}(G[1]) = G''[1]$.  Since $G$ is a $\whPhi$-WIT$_1$ sheaf, so is its subsheaf $G'$; thus $G' \in W_{1,\whPhi} \cap \Tc^l$, and  from \ref{para:W1xTcldecomposition} we have  $f\ch_1(G')=0$.  Since $f\ch_1(G)=0$, we also have $f\ch_1(G'')=0$.  By considering the $\mu_f$-HN filtration of $G''$, we obtain  $G'' \in \Fc^{l,0}$.

For any $\Bl$-morphism $\alpha : G''[1] \to F'$ and with $\Ac_1$ defined as in \eqref{eq:A1definition} below, we now have $\image \alpha \in \Ac_1$  and $\phi (\image \alpha) \to 1$ by Lemma \ref{lem:Lo14Lem5-7analogue} below.  However, this gives a composition of $\Bl$-injections
\[
\image \alpha \hookrightarrow F' \hookrightarrow F.
\]
  Hence $\alpha$ must be zero, or else $F$ would be destabilised, proving (i).  A similar argument as above proves (ii).  Hence $\whPhi F'$ is a torsion-free sheaf on $X$.

Next, we show that $\whPhi F'$ is $\mu_\oo$-semistable.  Take any short exact sequence of coherent sheaves on $X$
\[
  0 \to B \to \whPhi F' \to C \to 0
\]
where $B, C$ are both torsion-free sheaves.  Then $\Phi [1]$ takes this short exact sequence to a $\Bl$-short exact sequence
\[
0 \to \Phi B[1] \to F' \to \Phi C [1] \to 0
\]
by \ref{para:torsionfreeshtransf}.  The $Z^l$-semistability of $F$ gives $\phi (\Phi B[1]) \preceq \phi (F)$, which implies $\mu_\oo (B) \leq \mu_\oo (\whPhi F)$.  On the other hand, since $F''$ is precisely the $\whPhi$-WIT$_1$ component of $H^0(F)$, by Lemma \ref{lem:Lo14Lem5-9analogue} below we have $F'' \in \Phi \Coh^{\leq 0}(X)$, i.e.\ $\whPhi F'' \in \Coh^{\leq 0}(X)[-1]$.  This gives
\[
  \mu_\oo (\whPhi F') = \mu_\oo (\whPhi F) \geq \mu_\oo (B).
\]
Hence $\whPhi F'$ is a $\mu_\oo$-semistable torsion-free sheaf.
\end{proof}

Let us define
\begin{equation}\label{eq:A1definition}
  \Ac_1 = \langle \Coh^{\leq 0}(X), \scalea{\gyoung(;;+,;;+)}, \, \Fc^{l,0}[1]\rangle.
\end{equation}

\begin{lem}\label{lem:Lo14Lem5-7analogue}
The category $\Ac_1$ is closed under quotient in $\Bl$, and every object in this category satisfies $\phi \to 1$.
\end{lem}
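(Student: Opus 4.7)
My plan is to establish the two assertions separately. For the phase claim, I would invoke the case analysis in \ref{para:phasesofobjects}: objects of $\Coh^{\leq 0}(X)$ (case (1)) have $Z_\omega\in \RR_{<0}$, so $\phi=1$ identically; for $E\in \scalea{\gyoung(;;+,;;+)}$ (case (2.2.1)), fiberwise positivity of slope forces $\ch_2(E)>0$, so $\Re Z_\omega(E)=-\ch_2(E)<0$ is a nonzero constant in $v$ while $\Im Z_\omega(E)=u\,\Theta\ch_1(E)=O(u)\to 0$ along the volume section \eqref{eq:vhyperbolaequation}; for $A[1]$ with $A\in \Fc^{l,0}$ (case (5)), the identity \eqref{eq:ooch1twistReZePhiE1relation} gives $\Re Z_\omega(A[1])=-\tfrac{\alpha}{\beta}\,\oo\,\ch_1(\wh{A})$, which is a strictly negative constant in $v$ since $\wh{A}=\whPhi A[1]$ is a nonzero purely $1$-dimensional sheaf with effective $\ch_1$, while $\Im Z_\omega(A[1])=O(u)\to 0$. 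By additivity of $Z_\omega$ along any filtration of a nonzero $F\in \Ac_1$ with factors in these three generators, $\Re Z_\omega(F)$ is bounded above by a strictly negative constant and $\Im Z_\omega(F)=O(u)\to 0$, whence $\phi(F)\to 1$.

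For quotient-closure, I would use the torsion pair $(\Fc^{l,0}[1],\Ac_2)$ in $\Bl$ obtained from the torsion quintuple \eqref{eq:Bltorsionquintuple} by grouping the last four entries, where $\Ac_2=\langle \Fc^{l,-}[1],\Coh^{\leq 1}(X),\Tc^{l,+},\Tc^{l,0}\rangle$. Given a $\Bl$-surjection $F\twoheadrightarrow Q$ with $F\in \Ac_1$, take the $(\Fc^{l,0}[1],\Ac_2)$-torsion decompositions $0\to F_1\to F\to F_2\to 0$ and $0\to Q_1\to Q\to Q_2\to 0$; the vanishing $\Hom(F_1,Q_2)=0$ forces $F\to Q_2$ to factor through a $\Bl$-surjection $F_2\twoheadrightarrow Q_2$. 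Since each of the three generators of $\Ac_1$ lies in one of the two sides of this torsion pair, uniqueness of the torsion decomposition yields $F_2\in \langle \Coh^{\leq 0}(X),\scalea{\gyoung(;;+,;;+)}\rangle\subseteq \Coh^{\leq 1}(X)$. Passing to the $\Bl$-cohomology long exact sequence of $0\to K\to F_2\to Q_2\to 0$ and using that $F_2$ is a sheaf, the subsheaf $H^{-1}(Q_2)\hookrightarrow F_2$ lies in $\Coh^{\leq 1}(X)$; combined with $H^{-1}(Q_2)\in \Fc^l\subseteq \Coh^{=2}(X)$ from \ref{para:TlFlproperties}(ii), this forces $H^{-1}(Q_2)=0$. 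Hence $Q_2$ is a $\Coh(X)$-quotient of the fiber sheaf $F_2$, whose $\mu$-HN factors all have slopes in $(0,\infty]$; since the $\mu$-HN factors of any $\Coh(X)$-quotient of $F_2$ are themselves quotients of those of $F_2$, we obtain $Q_2\in \langle \Coh^{\leq 0}(X),\scalea{\gyoung(;;+,;;+)}\rangle$. Combined with $Q_1\in \Fc^{l,0}[1]$, this gives $Q\in \Ac_1$.

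The main obstacle I anticipate is the very last reduction, namely verifying rigorously that $\langle \Coh^{\leq 0}(X),\scalea{\gyoung(;;+,;;+)}\rangle$ is closed under $\Coh(X)$-quotient among fiber sheaves. The cleanest way to see this is to check that it is the torsion class of a torsion pair in the category of fiber sheaves, using the Harder--Narasimhan property of the fiberwise slope $\mu$ together with the Serre property of $\Coh^{\leq 0}(X)$, so that Polishchuk's lemma (cited in \ref{para:torsionpairtilting}) applies; this ensures that $\mu$-HN factors of a quotient are quotients of $\mu$-HN factors of the original, keeping the slope inequality $\mu>0$ intact.
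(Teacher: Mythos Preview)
Your phase argument is correct and matches the paper's invocation of \ref{para:phasesofobjects}; the only imprecision is calling $\wh{A}$ ``purely $1$-dimensional'' when you really only need (and only know) that it is supported in dimension $1$, which suffices for $\oo\ch_1(\wh{A})>0$.

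Your quotient-closure argument has a genuine error.  In the $\Bl$-short exact sequence $0\to K\to F_2\to Q_2\to 0$ with $F_2$ a sheaf, the long exact sequence of standard cohomology reads
\[
0\to H^{-1}(K)\to H^{-1}(F_2)\to H^{-1}(Q_2)\to H^0(K)\to H^0(F_2)\to H^0(Q_2)\to 0,
\]
so from $H^{-1}(F_2)=0$ you obtain $H^{-1}(Q_2)\hookrightarrow H^0(K)$, \emph{not} $H^{-1}(Q_2)\hookrightarrow F_2$.  Since you have no control over $H^0(K)$, the conclusion $H^{-1}(Q_2)\in\Coh^{\leq 1}(X)$ does not follow, and the contradiction with $\Fc^l\subset\Coh^{=2}(X)$ fails to materialise.

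The paper's argument bypasses your torsion-pair splitting entirely and is considerably shorter.  It uses that $\Bl_0=\{F\in\Bl:f\ch_1(F)=0\}$ is a Serre subcategory of $\Bl$ (see \ref{para:TlFlproperties}(iv)) and that $\Ac_1\subset\Bl_0$.  Given a $\Bl$-quotient $A\twoheadrightarrow A''$ with $A\in\Ac_1$, Serre-closedness forces $A''\in\Bl_0$; then $H^{-1}(A'')\in\Fc^l$ with $f\ch_1=0$ lies in $\Fc^{l,0}$, while $H^0(A'')$ is a $\Coh(X)$-quotient of $H^0(A)\in\langle\Coh^{\leq 0}(X),\scalea{\gyoung(;;+,;;+)}\rangle$ and hence lies in the same class.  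This last step is exactly the ``main obstacle'' you identified, so that part of your analysis is on target; but the Serre-subcategory trick replaces your entire torsion-pair machinery and would also have been the cleanest way to repair your broken step (since $Q_2\in\Bl_0$ combined with $H^{-1}(Q_2)\in\Fc^{l,-}$ forces $H^{-1}(Q_2)=0$).
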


\begin{proof}
The second part of the lemma follows from the computations in \ref{para:phasesofobjects}.  For the first part, take any $A \in \Ac_1$ and consider any $\Bl$-short exact sequence of the form
\[
0 \to A' \to A \to A'' \to 0.
\]
We need to show that $A'' \in \Ac_1$.  Recall that $\Bl_0 = \{ F \in \Bl : f\ch_1(F)=0\}$ is a Serre subcategory of $\Bl$; also note that $\Ac_1$ is contained in $\Bl_0$.  Hence $A''$ lies in $\Bl_0$, meaning $H^{-1}(A'') \in \Fc^{l,0}[1]$.  On the other hand, since $H^0(A)\in \langle \Coh^{\leq 0}(X), \scalea{\gyoung(;;+,;;+)} \,\rangle$ from the definition of $\Ac_1$, we also have $H^0(A'') \in \langle \Coh^{\leq 0}(X), \scalea{\gyoung(;;+,;;+)}\,\rangle$. Thus $A'' \in \Ac_1$, and we are done.
\end{proof}

\begin{lem}\label{lem:Lo14Lem5-9analogue}
Suppose $F \in \Bl$ is a $Z^l$-semistable object with $f\ch_1(F) \neq 0$.  Then the $\whPhi$-WIT$_1$ component of $H^0(F)$ lies in $\Phi \Coh^{\leq 0}(X)$.
\end{lem}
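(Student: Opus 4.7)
My plan is to show that $T$, the $\whPhi$-WIT$_1$ component of $H^0(F)$, is either zero or a $\whPhi$-WIT$_1$ fiber sheaf with $\ch_2(T)=0$; in the latter case the cohomological formula \eqref{eq::cohomFMT-hatPhi} will force $\wh{T}\in\Coh^{\leq 0}(X)$, and $T=\Phi\wh{T}$ via $\Phi\whPhi=\mathrm{id}[-1]$, placing $T$ inside $\Phi(\Coh^{\leq 0}(X))$. I assume below that $T\neq 0$.

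First I would locate $T$ inside $\Bl$. Since $H^0(F)\in\Tc^l$ and $\Tc^l$ is closed under quotients, $T\in W_{1,\whPhi}\cap\Tc^l$. By \ref{para:W1xTcldecomposition}, $T$ sits in a $\Coh(X)$-short exact sequence
\[
0\to A'\to T\to A''\to 0
\]
with $A'\in W_{1,\whPhi}\cap\Coh^{\leq 1}(X)$ a $\whPhi$-WIT$_1$ fiber sheaf and $A''\in\Tc^{l,0}$. Both terms lie in $\Tc^l\subset\Bl$, so the sequence is also exact in $\Bl$; composing with the canonical $\Bl$-surjection $F\twoheadrightarrow T$ supplied by the torsion triple \eqref{eq:Bltorsiontriple} exhibits $A''$ as a $\Bl$-quotient of $F$.

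The core of the argument is a phase comparison. By hypothesis and \ref{para:TlFlproperties}(iv) we have $f\ch_1(F)>0$, so $\phi(F)\to\tfrac{1}{2}$ by \ref{para:phasesofobjects}(3). Any nonzero object of $\Tc^{l,0}$ has $\phi\to 0$ by \ref{para:phasesofobjects}(4), so $Z^l$-semistability of $F$ applied to the quotient $F\twoheadrightarrow A''$ forces $A''=0$, and hence $T=A'$ is a $\whPhi$-WIT$_1$ fiber sheaf. Write $\ch_1(T)=\rho f$ (so $\rho=\Theta\ch_1(T)\geq 0$) and $s=\ch_2(T)$. Plugging $n=d=0$ and $c=\rho$ into \eqref{eq::cohomFMT-hatPhi} gives $\ch_1(\wh{T})=-sf$; effectivity of the $\ch_1$ of the coherent sheaf $\wh{T}$ forces $s\leq 0$. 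On the other hand, \ref{para:phasesofobjects}(2.2) shows that $s<0$ would give $\phi(T)\to 0\prec\phi(F)$, again contradicting semistability, so $s=0$. The same formula then yields $\ch_0(\wh{T})=0$, so $\wh{T}\in\Coh^{\leq 0}(X)$ and the plan concludes.

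The main obstacle is coordinating the two vanishings $A''=0$ and $s=0$ within a single semistability argument. Both share the template that a quotient with limiting phase $0$ contradicts $\phi(F)\to\tfrac{1}{2}$, but the case $s>0$ cannot be excluded by phase analysis alone, since then $\phi(T)\to 1$ is \emph{compatible} with $\phi(T)\succeq\phi(F)$; what rescues the argument is the one-sided effectivity bound $s\leq 0$ extracted from the cohomological Fourier-Mukai formula, which is independent of the phase asymptotics.
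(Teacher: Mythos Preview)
Your proof is correct and follows the same overall architecture as the paper's: use the torsion triple \eqref{eq:Bltorsiontriple} to realise $T$ as a $\Bl$-quotient of $F$, invoke \ref{para:W1xTcldecomposition} to split off the $\Tc^{l,0}$-piece $A''$, and kill $A''$ by the phase comparison $\phi(F)\to\tfrac12$ versus $\phi(A'')\to 0$.

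Where you diverge is the endgame. The paper, having reduced to a $\whPhi$-WIT$_1$ fiber sheaf $G$, argues that \emph{every} sheaf quotient of $G$ must have $\ch_2\geq 0$ (else it would be a $\Bl$-quotient of $F$ with $\phi\to 0$); combined with the fact that $\whPhi$-WIT$_1$ fiber sheaves have all HN factors of nonpositive slope, this forces $G$ to be slope semistable of degree zero, and then the citation \cite[Proposition 6.38]{FMNT} identifies such sheaves with $\Phi\Coh^{\leq 0}(X)$. You instead apply the phase argument only to $T$ itself to get $\ch_2(T)=0$, and then read off directly from \eqref{eq::cohomFMT-hatPhi} that $\wh T$ has $\ch_0=\ch_1=0$, hence lies in $\Coh^{\leq 0}(X)$, so $T=\Phi\wh T\in\Phi\Coh^{\leq 0}(X)$. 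Your route is more self-contained (no external citation, no need to test all quotients of $T$); the paper's route makes the structural reason---that $T$ is fiberwise semistable of degree zero---more visible. One cosmetic point: your reference to \ref{para:phasesofobjects}(3) for $\phi(F)\to\tfrac12$ is literally stated there only for sheaves, but the computation in \ref{para:phasesofobjects} shows it holds for any $F\in\Bl$ with $f\ch_1(F)>0$, which is what you need.
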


\begin{proof}
Let $G$ denote the $\whPhi$-WIT$_1$ component of $H^0(F)$.  With respect to the torsion triple \eqref{eq:Bltorsiontriple} in $\Bl$, this is precisely the $W_{1,\whPhi} \cap \Tc^l$ component of $F$.  Hence by \ref{para:W1xTcldecomposition}, $G$ has a two-step filtration $G_0 \subseteq G_1 = G$ in $\Coh (X)$ such that $G_1/G_0 \in \Tc^{l,0}$ and $G_0$ is a $\whPhi$-WIT$_1$ fiber sheaf (and so $\ch_2(G_0) \leq 0$).  Now we have a composition of $\Bl$-surjections
\[
  F \twoheadrightarrow G \twoheadrightarrow G_1/G_0
\]
with $\phi (F) \to \tfrac{1}{2}$ while $\phi (G_1/G_0) \to 0$ from \ref{para:phasesofobjects}(4).  Since $F$ is assumed to be $Z^l$-semistable, this forces $G_1/G_0=0$, and so $G=G_0$.

Suppose now that $\bar{c} = \Theta \ch_1(G)$ and $\bar{s} = \ch_2(G)$.  Then
\[
  Z_\omega (G) = -\bar{s} + i\bar{c}u.
\]
By the $Z^l$-semistability of $F$, the fiber sheaf $G$ cannot have any quotient sheaf with $\ch_2<0$ (such a quotient would have $\phi \to 0$ by \ref{para:phasesofobjects}(2.2.3), destabilising $F$).  Hence $G$ is a slope semistable fiber sheaf with $\ch_2=0$, implying $G \in \Phi \Coh^{\leq 0}(X)$ \cite[Proposition 6.38]{FMNT}.
\end{proof}

\section{The Harder-Narasimhan property of limit Bridgeland stability}\label{sec:HNproperty}

There are two different approaches to proving the Harder-Narasimhan (HN) property of $Z^l$-stability.  The first is a more direct approach, where we decompose the heart $\Ac^l$ using a torsion triple, and then prove that objects in each part of the torsion triple admits a finite filtration.  The second is an indirect approach that relies on a comparison between the large volume limit (as a polynomial stability condition - see \cite[Section 4]{BayerPBSC}) and $Z^l$-stability, and borrowing the HN property of the former stability; this approach is taken in \cite{Lo20}.  In this article, we present the first approach with some of the more routine arguments omitted, namely the proof of Proposition \ref{prop:Alowerstarfinitenessproperties}.  In particular, the first approach follows  the line of thought in \cite[Section 6]{Lo14}.

\begin{lem}\label{lem:Lo14Lem6-1analogue}
The category 
\[
  \Ac_1 = \langle \Coh^{\leq 0}(X), \scalea{\gyoung(;;+,;;+)}, \, \Fc^{l,0}[1]\rangle.
\]
as defined in \eqref{eq:A1definition} is a torsion class in $\Bl$.
\end{lem}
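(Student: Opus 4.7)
The strategy is to apply Polishchuk's lemma \cite[Lemma 1.1.3]{Pol}, recalled in \ref{para:torsionpairtilting}: in a noetherian abelian category, every full subcategory that is closed under extensions and quotients is the torsion class of a torsion pair. Thus the plan reduces to verifying three things for $\Ac_1 \subset \Bl$: closure under extensions, closure under quotients, and noetherianness of the ambient heart $\Bl$.

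Closure of $\Ac_1$ under extensions in $\Bl$ is immediate from the definition, since $\Ac_1$ is by construction an extension closure of its three generating subcategories $\Coh^{\leq 0}(X)$, $\scalea{\gyoung(;;+,;;+)}$ and $\Fc^{l,0}[1]$ inside $\Bl$. Closure of $\Ac_1$ under quotients in $\Bl$ is exactly the content of the previously established Lemma \ref{lem:Lo14Lem5-7analogue}, which already isolated this as the key quotient-stability statement using the Serre subcategory $\Bl_0$.

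The remaining, and principal, obstacle is verifying that $\Bl$ is a noetherian abelian category, since tilts of noetherian hearts need not automatically inherit noetherianness. I would approach this by exploiting the torsion quintuple \eqref{eq:Bltorsionquintuple}
$$(\Fc^{l,0}[1],\,\, \Fc^{l,-}[1],\,\, \Coh^{\leq 1}(X),\,\, \Tc^{l,+},\,\, \Tc^{l,0})$$
in $\Bl$, and establishing noetherianness of each of the five constituent subcategories individually. The middle term $\Coh^{\leq 1}(X)$ is noetherian because $\Coh (X)$ is noetherian on the smooth projective surface $X$. The flanking pieces $\Tc^{l,+}, \Tc^{l,0}, \Fc^{l,-}, \Fc^{l,0}$ are cut out by $\mu_f$-semistability bounds on (shifts of) torsion-free sheaves, and standard slope-theoretic arguments using the Harder-Narasimhan property of $\mu_f$ (see \ref{para:slopelikefunctions}) reduce their noetherianness to that of $\Coh (X)$. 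A routine extension-by-extension induction up the quintuple filtration then propagates noetherianness to all of $\Bl$, at which point Polishchuk's lemma concludes the proof.
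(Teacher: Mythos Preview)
Your reduction to Polishchuk's lemma is correct in principle, and closure of $\Ac_1$ under extensions and quotients is properly handled via Lemma \ref{lem:Lo14Lem5-7analogue}.  The gap is in the noetherianness of $\Bl$, which you flag as the principal obstacle but then dispatch too quickly.

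Two concrete problems.  First, ``noetherianness of each piece'' does not reduce to noetherianness of $\Coh(X)$ in the way you suggest, because $\Bl$-subobjects and $\Coh(X)$-subobjects do not coincide on the pieces.  For example, if $E \in \Tc^{l,0}$ and $M \hookrightarrow E$ is a $\Bl$-monomorphism, the long exact sequence shows $M$ is a sheaf in $\Tc^l$, but the sheaf map $M \to E$ can have a nonzero kernel lying in $\Fc^l$; so $M$ is not a subsheaf of $E$, and ascending chains of such $M$ are not bounded by $E$ inside $\Coh(X)$.  Second, the ``routine extension-by-extension induction'' up a torsion filtration is not routine: once the torsion parts $T(M_i)$ of an ascending chain stabilise at some $T$, the quotients $M_i/T$ do lie in the torsion-free class, but there is no reason they embed into a fixed object of that class (the ambient $E/T$ need not lie there), so the induction does not close up without further input.

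The paper sidesteps noetherianness of $\Bl$ altogether and verifies the torsion-pair decomposition directly.  Given $F \in \Bl$, one first strips off the $\Fc^{l,0}[1]$-component using the quintuple \eqref{eq:Bltorsionquintuple}, leaving a quotient $F'$ with $\Hom(\Fc^{l,0}[1],F')=0$.  Any $\Ac_1$-subobject of $F'$ is then forced to be a sheaf in $\langle \Coh^{\leq 0}(X), \scalea{\gyoung(;;+,;;+)}\rangle$, and the key point is that such sheaves are $\whPhi$-WIT$_0$: applying $\whPhi^0$ converts an ascending chain $U_1 \subseteq U_2 \subseteq \cdots \subseteq F'$ of this type into an ascending chain of coherent subsheaves of $\whPhi^0 F'$, which terminates by noetherianness of $\Coh(X)$.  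An octahedral-axiom argument then reassembles the maximal $\Ac_1$-subobject of $F$.  It is the Fourier--Mukai transform --- entirely absent from your sketch --- that supplies the bounded ambient object your induction lacks.
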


\begin{proof}
We already showed in Lemma \ref{lem:Lo14Lem5-7analogue} that $\Ac_1$ is closed under quotient in $\Bl$.  It remains to show that every object $F \in\Bl$ is the extension of an object in $\Ac_1^\circ$ by an object in $\Ac_1$.

For any $F \in \Bl$, consider the $\Bl$-short exact sequence
\[
0 \to G[1] \to F \to F' \to 0
\]
where $G[1]$ is the $\Fc^{l,0}[1]$-component of $F$ with respect to the torsion quintuple \ref{eq:Bltorsionquintuple}; equivalently, $G$ is the $\Fc^{l,0}$-component of $H^{-1}(F)$.  Note that $\Hom (\Fc^{l,0}[1],F')=0$ by construction.

Suppose $F' \notin \Ac_1^\circ$.  Then there exists a nonzero morphism $\beta : U \to F'$ where $U \in \Ac_1$.  Since $\Ac_1$ is closed under quotient in $\Bl$, we can replace $U$ by $\image \beta$ and assume $\beta$ is a $\Bl$-injection.  The vanishing  $\Hom (\Fc^{l,0}[1],F')=0$ then implies $H^{-1}(U)=0$ and so $U = H^0(U) \in \langle \Coh^{\leq 0}(X), \scalea{\gyoung(;;+,;;+)} \,\rangle$.

Suppose we have an ascending chain in $\Bl$
\[
U_1 \subseteq U_2 \subseteq \cdots \subseteq U_m \subseteq \cdots \subseteq F'
\]
where $U_i \in \langle \Coh^{\leq 0}(X), \scalea{\gyoung(;;+,;;+)}  \,\rangle$ for all $i$.  This induces an ascending chain of coherent sheaves
\[
  \whPhi^0 U_1 \subseteq \whPhi^0 U_2 \subseteq \cdots \subseteq \whPhi^0 F'.
\]
Thus the $U_i$ must stabilise, i.e.\ there exists a maximal $\Bl$-subobject $U$ of $F'$ lying in the extension closure $\langle \Coh^{\leq 0}(X), \scalea{\gyoung(;;+,;;+)}  \,\rangle$.  Applying the octahedral axiom to the $\Bl$-surjections $F \twoheadrightarrow F' \twoheadrightarrow F'/U$ gives the diagram
\[
\scalebox{0.8}{
\xymatrix{
  & & & G[2] \ar[ddd] \\
  & & & \\
  & F' \ar[dr] \ar[uurr] & & \\
  F \ar[ur] \ar[rr] & & F'/U \ar[r] \ar[dr] & M [1] \ar[d] \\
  & & & U [1]
}
}
\]
in which every straight line is an exact triangle, and for some $M \in \Bl$.  The vertical exact triangle gives $H^{-1}(M) \cong G$ and $H^0(M) \cong U$, and so $M \in \Ac_1$.  A similar argument as in the proof of \cite[Lemma 6.1(b)]{Lo14} then shows that $F'/U \in \Ac_1^\circ$, thus  finishing the proof.
\end{proof}

We now define
\begin{align}
\Ac_{1,1/2} &:= \langle \Ac_1, \Fc^{l,-}[1], \scalea{\gyoung(;;+,;;0)},  \scalea{\gyoung(;;*,;+;*)}, \scalea{\gyoung(;+;*,;+;*)} \, \rangle \notag \\
&= \langle \Fc^l[1], \xymatrix @-2.3pc{
\scalea{\gyoung(;;,;;+)} &  \scalea{\gyoung(;;+,;;+)} &  \scalea{\gyoung(;;*,;+;*)} & \scalea{\gyoung(;+;*,;+;*)} \\
& \scalea{\gyoung(;;+,;;0)} & &
}
\rangle.   \label{eq:A1and1over2definition}
\end{align}

\begin{lem}\label{lem:Lo14Lem6-2analogue}
$\Ac_{1,1/2}$ is a torsion class in $\Bl$.
\end{lem}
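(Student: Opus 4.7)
The argument parallels the proof of Lemma~\ref{lem:Lo14Lem6-1analogue} and proceeds in two steps: (a) showing that $\Ac_{1,1/2}$ is closed under $\Bl$-quotients, and (b) constructing an $(\Ac_{1,1/2},\Ac_{1,1/2}^\circ)$-decomposition of every object of $\Bl$. For (a), given $F\in\Ac_{1,1/2}$ and a $\Bl$-surjection $F\twoheadrightarrow F''$, I apply the torsion quintuple \eqref{eq:Bltorsionquintuple} to $F''$; it then suffices to show the $\Tc^{l,0}$-part of $F''$ vanishes and that the $\Coh^{\leq 1}(X)$-part carries no $\scalea{\gyoung(;;+,;;-)}$-constituent. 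This reduces to checking, for each generator of $\Ac_{1,1/2}$, that $\Hom_{\Bl}$ into $\Tc^{l,0}$ or into $\scalea{\gyoung(;;+,;;-)}$ either vanishes or factors through a subobject already lying in $\Ac_{1,1/2}$.

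Against $\Tc^{l,0}$ the verification is clean: morphisms out of $\Fc^l[1]$ vanish by shift degree $\Ext^{-1}=0$; since an object of $\Tc^{l,0}$ is a torsion-free sheaf (its $\mu_f$-semistability with $\mu_f=0$ rules out any torsion subsheaf), morphisms from the torsion generators $\Coh^{\leq 0}(X)$, $\scalea{\gyoung(;;+,;;+)}$, $\scalea{\gyoung(;;+,;;0)}$, $\scalea{\gyoung(;;*,;+;*)}$ all vanish in $\Coh(X)$; and morphisms from $\scalea{\gyoung(;+;*,;+;*)}\subset W_{0,\whPhi}$ to $\Tc^{l,0}\subset W_{1,\whPhi}$ vanish by the $\Coh(X)$-torsion pair $(W_{0,\whPhi},W_{1,\whPhi})$. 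The analogous check against $\scalea{\gyoung(;;+,;;-)}$ disposes of the $\Fc^l[1]$ and the $2$-dimensional generators by shift degree and slope comparison. The only genuinely delicate case is that of $\Coh^{\leq 0}(X)\to\scalea{\gyoung(;;+,;;-)}$: a $0$-dimensional sheaf may map nontrivially into a dimension-$1$ fiber sheaf of negative slope, so one cannot argue by pure Hom-vanishing. Instead, one tracks how such a map sits inside the HN-filtration within the $\Coh^{\leq 1}(X)$-slot of the quintuple, showing the image is absorbed into an $\Ac_{1,1/2}$-subobject of the target—so no unwanted $\scalea{\gyoung(;;+,;;-)}$-factor persists in $F''$.

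For (b), I first apply Lemma~\ref{lem:Lo14Lem6-1analogue} to extract the maximal $\Ac_1$-subobject of $F$, reducing to the case $F\in\Ac_1^\circ$. I then iteratively enlarge the subobject by appending, one at a time, the generators $\Fc^{l,-}[1]$, $\scalea{\gyoung(;;+,;;0)}$, $\scalea{\gyoung(;;*,;+;*)}$ and $\scalea{\gyoung(;+;*,;+;*)}$, taking at each step the maximal $\Bl$-subobject of $F$ lying in the successively enlarged extension closure. At each stage the existence of a maximal subobject follows from a noetherian argument: an ascending chain of such $\Bl$-subobjects yields, via $H^0$ (or via $\whPhi^0$ for the $\whPhi$-WIT$_0$ generators), an ascending chain of coherent sheaves on $X$, which must stabilise since $\Coh(X)$ is noetherian. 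Assembling these enlargements via the octahedral axiom, exactly as in the proof of Lemma~\ref{lem:Lo14Lem6-1analogue}, produces the required $(\Ac_{1,1/2},\Ac_{1,1/2}^\circ)$-decomposition.

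The main obstacle is the case analysis for step (a), and concretely the $\Coh^{\leq 0}(X)\to\scalea{\gyoung(;;+,;;-)}$ case described above. It is here that the proof cannot be handed off to generic $\Hom$-vanishings and instead requires a direct structural argument inside $\Coh^{\leq 1}(X)$. The structural result in \ref{para:W1xTcldecomposition} and the $f\cdot\ch_1$-characterisation of $W_{0,\whPhi}$- versus $W_{1,\whPhi}$-sheaves from \cite[Lemma 2.5]{Lo7} are what make all the Fourier-Mukai-crossing cases go through uniformly.
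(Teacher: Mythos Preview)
Your approach misses the key simplification that makes this lemma substantially easier than Lemma~\ref{lem:Lo14Lem6-1analogue}. Because $\Ac_{1,1/2}$ contains \emph{all} of $\Fc^l[1]$ (not merely $\Fc^{l,0}[1]$ as $\Ac_1$ does), membership of an object $F\in\Bl$ in $\Ac_{1,1/2}$ is decided entirely by $H^0(F)$: one has $\Ac_{1,1/2}=\{F\in\Bl : H^0(F)\in\mathcal{E}\}$ where
\[
\mathcal{E}=\langle\, \Coh^{\leq 0}(X),\ \scalea{\gyoung(;;+,;;+)},\ \scalea{\gyoung(;;+,;;0)},\ \scalea{\gyoung(;;*,;+;*)},\ \scalea{\gyoung(;+;*,;+;*)}\,\rangle\subset\Coh(X).
\]
The paper checks directly that $\mathcal{E}$ is a torsion class in $\Coh(X)$; closure under quotients and the existence of decompositions in $\Bl$ then follow at once by applying $H^0$, with no ascending-chain arguments or octahedral gymnastics required. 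This is the route of \cite[Lemma 6.2]{Lo14}, and it is genuinely different from (and much shorter than) the iterative strategy you propose.

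Your step (a) also has a concrete gap. You assert that, via the quintuple \eqref{eq:Bltorsionquintuple}, it suffices to kill the $\Tc^{l,0}$-component of $F''$ and the $\scalea{\gyoung(;;+,;;-)}$-constituent of its $\Coh^{\leq 1}(X)$-component. But this tacitly assumes $\Tc^{l,+}\subset\Ac_{1,1/2}$, which is false: a torsion-free $\mu_f$-semistable sheaf $F$ with $\mu_f(F)>0$ can sit in a nonsplit extension $0\to E\to F\to G\to 0$ with $E\in\scalea{\gyoung(;+;*,;+;*)}$ and $G\in\scalea{\gyoung(;;+,;;-)}$; one computes $\whPhi^1 F\cong\whPhi^1 G\neq 0$, so $F\notin W_{0,\whPhi}$, and since $G\notin\mathcal{E}$ while $\mathcal{E}$ is closed under quotients, $F\notin\mathcal{E}$. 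Thus the quintuple is the wrong filtration for testing membership in $\Ac_{1,1/2}$; the correct test is the $(\mathcal{E},\mathcal{E}^\circ)$-decomposition of $H^0(F'')$ in $\Coh(X)$, which is exactly what the paper uses.
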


\begin{proof}
For the purpose of this proof, let us write
\[
  \mathcal{E} =  \xymatrix @-2.3pc{
\scalea{\gyoung(;;,;;+)} &  \scalea{\gyoung(;;+,;;+)} &  \scalea{\gyoung(;;*,;+;*)} & \scalea{\gyoung(;+;*,;+;*)} \\
& \scalea{\gyoung(;;+,;;0)} & &
}.
\]
(Recall that concatenation of 2 by 2 boxes of the form $\scalea{\gyoung(;;,;;)}$ means their extension closure.) It is easy to check that $\mathcal{E}$ is a torsion class in $\Coh (X)$ and that
\[
  \mathcal{E} = \{ H^0(F) : F \in \Ac_{1,1/2}\}.
\]
The same argument as in \cite[Lemma 6.2]{Lo14} then shows that every object in $\Bc^l$ can be written as the extension of an object in $\mathcal{E}$ by an object in $\Ac_{1,1/2}$, proving the lemma.
\end{proof}

Since $\Fc^l[1]$ is contained in $\Ac_{1,1/2}$, any object  $M \in \Bc^l\cap \Ac_{1,1/2}^\circ$ must have $H^{-1}(M)=0$, i.e.\ $M = H^0(M)\in \Tc^l$.  On the other hand, the categories $\Coh^{\leq 1}(X)$ and $\scalea{\gyoung(;+;*,;+;*)}$ are both contained in $\Ac_{1,1/2}$, and so  $W_{0,\whPhi} \subset \Ac_{1,1/2}$.  It follows that
 \begin{equation}\label{eq:A112cCohinW1Tl}
 \Ac_{1,1/2}^\circ \cap \Coh (X) \subset W_{1,\whPhi} \cap \Tc^l.
\end{equation}

Now that we know $\Ac_1, \Ac_{1,1/2}$ are both torsion classes in $\Bl$ with the inclusion $\Ac_1 \subseteq \Ac_{1,1/2}$, we can construct the torsion triple in $\Bl$
\begin{equation}\label{eq:torsiontripleAlowerstar}
  (\Ac_1, \,\Ac_{1,1/2} \cap \Ac_1^\circ, \,\Ac_{1,1/2}^\circ ).
\end{equation}

We have the following finiteness properties for the components of this torsion triple.  We omit the proof of this proposition, since it is modelled after  the proof of the HN property of limit tilt stability on a product elliptic threefold in \cite[Proposition 6.3]{Lo14}:

\begin{prop}\label{prop:Alowerstarfinitenessproperties}
The following finiteness properties hold:
\begin{itemize}
\item[(1)] For $\Ac = \Ac_1$:
\begin{itemize}
\item[(a)] There is no infinite sequence of strict monomorphisms in $\Ac$
\begin{equation}\label{eq:prop-finiteness-eq1}
  \cdots \hookrightarrow E_n \hookrightarrow \cdots \hookrightarrow E_1 \hookrightarrow E_0.
\end{equation}
\item[(b)] There is no infinite sequence of strict epimorphisms in $\Ac$
\begin{equation}\label{eq:prop-finiteness-eq2}
  E_0 \twoheadrightarrow E_1 \twoheadrightarrow \cdots \twoheadrightarrow E_n \twoheadrightarrow \cdots.
\end{equation}
\end{itemize}
\item[(2)] For $\Ac = \Ac_{1,1/2} \cap \Ac_1^\circ$:
    \begin{itemize}
    \item[(a)] There is no infinite sequence of strict monomorphisms \eqref{eq:prop-finiteness-eq1} in $\Ac$.
    \item[(b)] There is no infinite sequence of strict epimorphisms \eqref{eq:prop-finiteness-eq2} in $\Ac$.
    \end{itemize}
\item[(3)] For $\Ac = \Ac_{1,1/2}^\circ$:
  \begin{itemize}
  \item[(a)] There is no infinite sequence of strict monomorphisms \eqref{eq:prop-finiteness-eq1} in $\Ac$.
  \item[(b)] There is no infinite sequence of strict epimorphisms \eqref{eq:prop-finiteness-eq2} in $\Ac$.
  \end{itemize}
\end{itemize}
\end{prop}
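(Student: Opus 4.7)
My plan is to adapt the strategy used to prove the analogous statement on a product elliptic threefold in \cite[Proposition 6.3]{Lo14}. For each of the three cases, I would (i) express any $E \in \Ac$ in terms of its cohomology sheaves $H^{-1}(E)$ and $H^0(E)$, which lie in the explicit subcategories of $\Coh(X)$ appearing in the definition of $\Ac$; (ii) transport a chain of strict monos or epis in $\Bl$ to chains at the level of $H^{-1}$ and $H^0$ via the long exact sequence in $\Coh(X)$; (iii) exhibit a sequence of monotone numerical invariants, drawn from $\ch_0$, $f\ch_1$, $(\Theta + mf)\ch_1$ and $\ch_2$, that is bounded; and (iv) once these invariants stabilize, reduce the remaining chain to noetherianity of $\Coh(X)$.

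Case (3) is the easiest. Since $\Fc^l[1] \subset \Ac_{1,1/2}$, any $E \in \Ac_{1,1/2}^\circ$ has $H^{-1}(E) = 0$, so $\Ac_{1,1/2}^\circ \subset \Coh(X)$; combined with \eqref{eq:A112cCohinW1Tl}, this forces $\Ac_{1,1/2}^\circ \subset W_{1,\whPhi}\cap \Tc^l$. A direct computation with the long exact sequence shows that, among coherent sheaves, strict $\Bl$-monos and $\Bl$-epis coincide with strict $\Coh(X)$-monos and $\Coh(X)$-epis. Noetherianity of $\Coh(X)$ finishes the chain of monos, while a chain of strict epis $E_0 \twoheadrightarrow E_1 \twoheadrightarrow \cdots$ in $\Coh(X)$ corresponds to an ascending chain of kernels inside $E_0$ and also stabilizes.

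For case (1), every $E \in \Ac_1$ fits in a triangle $H^{-1}(E)[1] \to E \to H^0(E)$ with $H^{-1}(E) \in \Fc^{l,0}$ and $H^0(E) \in \langle \Coh^{\leq 0}(X), \scalea{\gyoung(;;+,;;+)}\rangle$. Along a chain, the long exact sequence gives a descending chain of the $H^{-1}$'s (for monos) and an ascending/descending chain of the $H^0$'s (for epis). On $\Fc^{l,0}$ the rank $\ch_0$ together with $\mu_f$-semistability of slope $0$ bounds the situation, with Bogomolov applied to the $\mu_{\Theta+mf}$-HN factors controlling $\ch_2$; on $\langle \Coh^{\leq 0}(X), \scalea{\gyoung(;;+,;;+)}\rangle$ the fiber support plus $\ch_2$ yield two nonnegative monotone invariants. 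After these stabilize, noetherianity in $\Coh(X)$ closes the argument. Case (2) is treated analogously, using the torsion quintuple \eqref{eq:Bltorsionquintuple} to decompose $E \in \Ac_{1,1/2}\cap \Ac_1^\circ$ into its $\Fc^{l,-}[1]$, $\scalea{\gyoung(;;+,;;0)}$, $\scalea{\gyoung(;;*,;+;*)}$ and $\scalea{\gyoung(;+;*,;+;*)}$ components, with $f\ch_1$ and $(\Theta+mf)\ch_1$ serving as primary monotone invariants.

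The hard part will be the bookkeeping in case (2). Subobjects and quotients in $\Bl$ can reshuffle components of the quintuple through the long exact sequence, so it is not immediately clear which invariant is monotone along a given chain. The key will be to first stabilize $\ch_0$ and $f\ch_1$ using the torsion triple \eqref{eq:Bltorsiontriple} together with noetherianity of $\Coh(X)$, then exploit the vanishing $\Hom(\Ac_1, -) = 0$ imposed by lying in $\Ac_1^\circ$ to rule out spurious trading between the $\Fc^{l,-}[1]$ part and the sheaf parts, and finally invoke Bogomolov on the residual torsion-free contributions to bound $\ch_2$. Once this is in place, the same noetherian reduction as in the other cases concludes.
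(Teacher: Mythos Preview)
Your overall framework---following \cite[Proposition 6.3]{Lo14}, passing to cohomology sheaves via the long exact sequence, and tracking monotone numerical invariants---matches the paper. But there is a genuine gap in case (3)(a). You claim that since $\Ac_{1,1/2}^\circ \subset \Coh(X)$, noetherianity of $\Coh(X)$ terminates the chain $\cdots \hookrightarrow E_1 \hookrightarrow E_0$. Noetherianity gives only the \emph{ascending} chain condition; a descending chain of subsheaves of $E_0$ need not terminate, and the extra constraint $G_i := E_i/E_{i+1} \in \Ac_{1,1/2}^\circ$ does not by itself supply a bounded monotone invariant. The paper's argument (as in \cite{Lo14}) instead routes through $\whPhi[1]$: after stabilising $\ch_0(E_i)$ one reduces to $G_i$ being a $\whPhi$-WIT$_1$ fiber sheaf in $\scalea{\gyoung(;;+,;;-)}$, and then $\whPhi[1]$ takes the $\Bl$-short exact sequences to short exact sequences in $\Coh^{\leq 1}(X)$, where $\omega'\ch_1(\wh{E_i})$ is a decreasing sequence of nonnegative integers for any ample $\omega'$. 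Once this stabilises, $\wh{G_i}$ is $0$-dimensional, which is incompatible with $G_i \in \scalea{\gyoung(;;+,;;-)}$ unless $G_i=0$. Your treatment of (3)(b), via the ascending chain of kernels, is correct.

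Your sketch for case (2) likewise omits the essential use of $\whPhi$. The paper does not use a Bogomolov inequality here; instead, after using $f\ch_1=0$ together with $\Hom(\Ac_1,-)=0$ to force the successive cokernels (resp.\ kernels) into $\scalea{\gyoung(;;+,;;0)}$, it applies $\whPhi$ and invokes two auxiliary lemmas: that $\whPhi^1(H^0(A)) \in \Coh^{\leq 0}(X)$ for every $A \in \Ac_{1,1/2}$ (for (2)(a)), and that $\wh{H^{-1}(E)}$ is torsion-free for every $E \in \Ac_1^\circ$ (for (2)(b)). These then feed into a $0$-dimensionality or double-dual argument that forces stabilisation. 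It is not clear that a Bogomolov-type bound on $\ch_2$ alone would suffice, since the relevant objects in $\Ac_{1/2}$ are not themselves $\mu$-semistable torsion-free sheaves.
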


Let us now set
\begin{align*}
  \Ac_{1/2} &:= \Ac_{1,1/2} \cap \Ac_1^\circ \\
  \Ac_0 &:= \Ac_{1,1/2}^\circ,
\end{align*}
so that the torsion triple \eqref{eq:torsiontripleAlowerstar} can be rewritten as
\begin{equation}\label{eq:torsiontripleAlowerstar-v2}
(\Ac_1, \, \Ac_{1/2}, \, \Ac_0).
\end{equation}

The following is an analogue of \cite[Lemma 6.5]{Lo14}:

\begin{lem}\label{lem:Lo14Lem6-5analogue}
For $i=1,\tfrac{1}{2}, 0$ and any $F \in \Ac_i$, we have $\phi (F) \to i$.
\end{lem}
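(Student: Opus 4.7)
The plan is to treat the three cases $i = 1, \tfrac{1}{2}, 0$ separately, each time reducing to the case-by-case phase computations carried out in \ref{para:phasesofobjects} together with the observation that asymptotic phase is preserved under $\Bl$-extensions within each stratum. The unifying principle is that on each of $\Ac_1$, $\Ac_{1/2}$, $\Ac_0$ the dominant component of $Z_\omega$ as $v \to \infty$ along \eqref{eq:vhyperbolaequation} has a uniform sign, so that no cancellation can occur when summing central charges of two objects in the same stratum.

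First I would dispose of the case $i = 1$: the generators of $\Ac_1$ are $\Coh^{\leq 0}(X)$, $\scalea{\gyoung(;;+,;;+)}$ and $\Fc^{l,0}[1]$, which satisfy $\phi \to 1$ by cases (1), (2.2.1) and (5) of \ref{para:phasesofobjects} respectively. In fact this statement is already recorded in Lemma \ref{lem:Lo14Lem5-7analogue}, so what remains is only to note that, on each such generator, $\Re Z_\omega$ is strictly negative and of order $O(1)$ whereas $\Im Z_\omega$ is nonnegative and of strictly smaller order in $v$, so the asymptotic direction is preserved under extensions in $\Bl$.

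For $i = \tfrac{1}{2}$, I would first check that each of the additional generators of $\Ac_{1,1/2}$ beyond $\Ac_1$ satisfies $\phi \to \tfrac{1}{2}$: $\Fc^{l,-}[1]$ by case (6), $\scalea{\gyoung(;;+,;;0)}$ by case (2.2.2), $\scalea{\gyoung(;;*,;+;*)}$ by case (2.1), and $\scalea{\gyoung(;+;*,;+;*)}$ by case (3). Then for any $F \in \Ac_{1/2} = \Ac_{1,1/2} \cap \Ac_1^\circ$, I would use that $\Ac_1$ is a torsion class in $\Bl$ (Lemma \ref{lem:Lo14Lem6-1analogue}) to conclude that the $\Ac_1$-part of $F$ is a $\Bl$-subobject of $F$, which must vanish by $\Ac_1$-orthogonality. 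Hence $F$ lies in the extension closure of the four phase-$\tfrac{1}{2}$ generators above. On each of these $\Im Z_\omega$ is strictly positive and of leading order in $v$, so the extension $F$ also has $\phi \to \tfrac{1}{2}$.

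For $i = 0$, the inclusion \eqref{eq:A112cCohinW1Tl} gives $F \in W_{1,\whPhi} \cap \Tc^l$, so by \ref{para:W1xTcldecomposition} there is a two-step $\Coh(X)$-filtration $F_0 \subseteq F$ with $F_0$ a $\whPhi$-WIT$_1$ fiber sheaf and $F/F_0 \in \Tc^{l,0}$. The quotient already has $\phi \to 0$ by case (4). For $F_0$, the key observation is that $\Coh^{\leq 0}(X)$, $\scalea{\gyoung(;;+,;;+)}$ and $\scalea{\gyoung(;;+,;;0)}$ are all contained in $\Ac_{1,1/2}$ and their objects, viewed as subsheaves of the torsion sheaf $F_0 \subseteq F \in \Tc^l$, give rise to $\Bl$-subobjects of $F$; since $F \in \Ac_{1,1/2}^\circ$, these must vanish, forcing $F_0 \in \scalea{\gyoung(;;+,;;-)}$. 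By case (2.2.3) such $F_0$ have $\phi \to 0$, and the extension argument closes as before, with $\Re Z_\omega > 0$ of order $O(1)$ dominating.

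The main obstacle I anticipate is the last verification in the $i=0$ case: one must argue carefully that a sheaf subobject of $F_0$ lying in one of the prohibited classes actually embeds into $F$ in the abelian category $\Bl$, not merely in $\Coh(X)$. Because $F$ is concentrated in degree zero (as $H^{-1}(F)=0$ from $\Fc^l[1] \subset \Ac_{1,1/2}$), and because these prohibited classes are contained in $\Tc^l$, the $\Coh(X)$-inclusions do lift to $\Bl$-inclusions; once this is made precise the rest is bookkeeping with the stratification \eqref{eq:Bltorsionquintuple}.
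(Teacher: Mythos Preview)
Your arguments for $i=1$ and $i=0$ are essentially the paper's, including the observation you flag as the ``main obstacle'': since $F \in \Ac_0$ is a sheaf in $\Tc^l$ and $F_0 \hookrightarrow F$ is a $\Coh(X)$-injection between objects of $\Tc^l$, it is a $\Bl$-injection, so $\Hom(\scalea{\gyoung(;;+,;;0)},F_0) \hookrightarrow \Hom(\scalea{\gyoung(;;+,;;0)},F)=0$. That part is fine.

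The genuine gap is in the case $i=\tfrac{1}{2}$. Your inference ``the $\Ac_1$-part of $F$ vanishes, \emph{hence} $F$ lies in the extension closure of the four phase-$\tfrac{1}{2}$ generators'' does not follow. Knowing $F \in \Ac_{1,1/2}$ only says $F$ is built by iterated extensions involving $\Ac_1$ and the four other classes; having zero $\Ac_1$-torsion subobject does not preclude $\Ac_1$-pieces appearing as \emph{quotients} in such a filtration. Indeed, the four generators are not even all contained in $\Ac_1^\circ$: for a non--locally-free $A \in \Fc^{l,-}$ one has $\Ext^1(\Coh^{\leq 0}(X),A)\neq 0$ by Lemma~\ref{lem:surfaceshlocallyfreechar}, so $A[1] \notin \Ac_1^\circ$. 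Thus $\Ac_{1/2}$ cannot coincide with that extension closure. A secondary issue: your claim that $\Im Z_\omega$ is ``of leading order in $v$'' on all four generators is false for $\scalea{\gyoung(;;+,;;0)}$, where $Z_\omega = i\bar{c}u$ is $O(1/v)$.

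The paper's fix is to abandon the extension-closure route entirely and split on $f\ch_1(F)$. If $f\ch_1(F)>0$ then $\Im Z_\omega(F)$ is genuinely $O(v)$ and dominates, giving $\phi(F)\to\tfrac{1}{2}$. If $f\ch_1(F)=0$, then $H^{-1}(F)\in\Fc^{l,0}$ must vanish (since $\Fc^{l,0}[1]\subset\Ac_1$ and $F\in\Ac_1^\circ$), so $F$ is a sheaf; the constraint $f\ch_1=0$ kills the $\scalea{\gyoung(;;*,;+;*)}$ and $\scalea{\gyoung(;+;*,;+;*)}$ subfactors, leaving $F$ a fiber sheaf with all HN factors of slope $\geq 0$, and then $F\in\Ac_1^\circ$ forces $F\in\scalea{\gyoung(;;+,;;0)}$, whence $\phi(F)=\tfrac{1}{2}$ exactly.
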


\begin{proof}
The case of $i=1$ follows from the definition of $\Ac_1$ and the computation in \ref{para:phasesofobjects}.

For $i=\tfrac{1}{2}$: take any $F \in \Ac_{1/2}$.  If $f\ch_1(F)>0$, then clearly $\phi (F) \to \tfrac{1}{2}$ and we are done.  Let us assume $f\ch_1(F)=0$ from now on.  Then $f\ch_1(H^{-1}(F))=0$, meaning $H^{-1}(F) \in \Fc^{l,0}$; however, $F \in \Ac_1^\circ$ and so $H^{-1}(F)$ must be zero, i.e.\ $F = H^0(F)$.

That $F \in \Ac_{1,1/2} \cap \Coh (X)$ with $f\ch_1(F)=0$ implies $F$ cannot have any subfactors in $\scalea{\gyoung(;;*,;+;*)}$ or $\scalea{\gyoung(;+*,;+*)}$.  Hence $F$ is a fiber sheaf where all the HN factors with respect to slope stability have $\ch_2 \geq 0$.  That  $F \in \Ac_1^\circ$ then forces $F \in \scalea{\gyoung(;;+,;;0)}$, giving us  $\phi (F) = \tfrac{1}{2}$ by  \ref{para:phasesofobjects}(2.2.2).

For $i=0$: take any $F \in \Ac_0$.  From \eqref{eq:A112cCohinW1Tl} we know $F \in W_{1,\whPhi} \cap \Tc^l$.  By \ref{para:W1xTcldecomposition}, we have a two-step filtration $F_0 \subseteq F_1 = F$ in $\Coh (X)$ where $F_0$ is a $\whPhi$-WIT$_1$ fiber sheaf while $F_1/F_0 \in \Tc^{l,0}$.  From \ref{para:phasesofobjects}-(4) we know $\phi (F_1/F_0) \to 0$, so it suffices to show $\phi (F_0) \to 0$.  Since $F \in \Ac_{1,1/2}^\circ$, we have $\Hom (\scalea{\gyoung(;;+,;;0)}, F_0)=0$, implying $F_0 \in \scalea{\gyoung(;;+,;;-)}$.  By \ref{para:phasesofobjects}(2.2.3) we have $\phi (F_0)\to 0$ as desired.
\end{proof}

\begin{lem}\label{lem:Lo14Lem6-6analogue}
An object $F \in \Bl$ is $Z^l$-semistable iff, for some $i=1, \tfrac{1}{2}, 0$, we have:
\begin{itemize}
\item $F \in \Ac_i$;
\item for any strict monomorphism $0 \neq F' \hookrightarrow F$ in $\Ac_i$, we have $\phi (F') \preceq \phi (F)$.
\end{itemize}
\end{lem}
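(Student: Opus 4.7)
The plan is a two-sided argument based on the torsion triple $(\Ac_1, \Ac_{1/2}, \Ac_0)$ constructed in \eqref{eq:torsiontripleAlowerstar-v2}, together with the asymptotic phase information from Lemma \ref{lem:Lo14Lem6-5analogue}, namely that every nonzero object of $\Ac_i$ has phase tending to $i$ as $v\to\infty$ along \eqref{eq:vhyperbolaequation}.

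For the forward direction, I would take the torsion-triple filtration
\[
0 \subseteq F^{(1)} \subseteq F^{(2)} \subseteq F
\]
with $F^{(1)} \in \Ac_1$, $F^{(2)}/F^{(1)} \in \Ac_{1/2}$ and $F/F^{(2)} \in \Ac_0$. If two of these subquotients are nonzero, then either $F^{(1)}$ (if nonzero alongside a lower factor) or $F^{(2)}$ provides a $\Bl$-subobject whose phase tends to a strictly larger value than the phase of the corresponding quotient (by Lemma \ref{lem:Lo14Lem6-5analogue}), contradicting $Z^l$-semistability of $F$. Hence exactly one of the three subquotients is nonzero, so $F \in \Ac_i$ for a unique $i$. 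The phase condition for strict monos in $\Ac_i$ is then immediate from $Z^l$-semistability, since any such strict mono is in particular a $\Bl$-monomorphism.

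For the backward direction, let $F \in \Ac_i$ satisfy the stated phase condition. Given any $\Bl$-mono $F' \hookrightarrow F$ with $F'\neq 0$, I would decompose $F'$ via the torsion triple as $0 \subseteq F'^{(1)} \subseteq F'^{(2)} \subseteq F'$ and treat the three cases separately. When $i=0$, the category $\Ac_0 = \Ac_{1,1/2}^\circ$ is a torsion-free class in $\Bl$, so $F' \in \Ac_0$ and the hypothesis applied to $F' \hookrightarrow F$ gives $\phi(F') \preceq \phi(F)$. When $i=1/2$, the vanishing $\Hom(\Ac_1, \Ac_{1/2})=0$ (from the torsion-tuple property) applied to the composite $F'^{(1)} \hookrightarrow F'\hookrightarrow F$ forces $F'^{(1)}=0$; thus $F'^{(2)}\in\Ac_{1/2}$ is a strict mono into $F$ in $\Ac_{1/2}$, giving $\phi(F'^{(2)})\preceq \phi(F)$ by hypothesis, while the quotient $F'/F'^{(2)}\in\Ac_0$ has $\phi \to 0 \prec \tfrac{1}{2}\leftarrow \phi(F)$, and the seesaw property applied to $0\to F'^{(2)}\to F'\to F'/F'^{(2)}\to 0$ yields $\phi(F')\preceq \phi(F)$. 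When $i=1$, the hypothesis applied to $F'^{(1)}\hookrightarrow F$ gives $\phi(F'^{(1)})\preceq \phi(F)$, while the other two factors $F'^{(2)}/F'^{(1)}\in\Ac_{1/2}$ and $F'/F'^{(2)}\in\Ac_0$ have phases tending to $\tfrac{1}{2}$ and $0$, both strictly less than $\phi(F)\to 1$, so two applications of the seesaw property again give $\phi(F')\preceq\phi(F)$.

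The main technical subtlety will be the case $i=1$, where one must carefully handle the seesaw property in the polynomial (asymptotic) setting, since $\phi$ is a germ of a function in $v$ as $v\to\infty$ along the volume section \eqref{eq:vhyperbolaequation} rather than a single number. The relevant seesaw statement is exactly the one underlying Bayer's polynomial stability formalism in \cite[Section 4]{BayerPBSC}, applied via the change of coordinates in Remark \ref{rem:changeofvar} that recasts $Z_\omega$ as a Laurent polynomial in $v'$; once phrased in these coordinates, the usual comparison of phases for a two-term filtration carries over verbatim. A secondary point to verify is that the quotient $F/F'$ used implicitly in the seesaw lies in $\Bl$, which is automatic since $\Bl$ is the heart of a t-structure and $F'\hookrightarrow F$ is a $\Bl$-mono.
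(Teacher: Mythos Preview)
Your overall strategy is the one the paper invokes (the argument of \cite[Lemma 6.6]{Lo14}), and the forward direction together with the backward direction for $i=1$ are correct.  In the case $i=1$ the key point you use implicitly is that $\Ac_1$ is a torsion class in $\Bl$, hence closed under quotients, so $F/F'^{(1)}\in\Ac_1$ and $F'^{(1)}\hookrightarrow F$ is genuinely a \emph{strict} monomorphism in $\Ac_1$; thus the hypothesis applies.

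There is, however, a gap in the backward direction for $i=\tfrac12$ and $i=0$: you apply the hypothesis to monomorphisms that are not known to be strict in $\Ac_i$.  For $i=\tfrac12$, while $F'^{(2)},F\in\Ac_{1/2}$, the cokernel $F/F'^{(2)}$ lies in the torsion class $\Ac_{1,1/2}$ but need not lie in $\Ac_1^\circ$ (the torsion-free class $\Ac_1^\circ$ is not closed under quotients), so the inclusion may fail to be strict.  For $i=0$, although $F'\in\Ac_0$ because $\Ac_0$ is a torsion-free class, the cokernel $F/F'$ can have a nonzero $\Ac_{1,1/2}$-component.  The standard fix is to saturate: enlarge $F'^{(2)}$ (resp.\ $F'$) to the preimage $F''\subseteq F$ of the maximal $\Ac_1$-subobject (resp.\ $\Ac_{1,1/2}$-subobject) of the cokernel.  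Then $F''\subseteq F$ still lies in $\Ac_i$ (since $\Ac_1^\circ$, resp.\ $\Ac_0$, is closed under subobjects) and now $F/F''\in\Ac_i$, so the hypothesis gives $\phi(F'')\preceq\phi(F)$.  The added piece $F''/F'^{(2)}\in\Ac_1$ (resp.\ $F''/F'\in\Ac_{1,1/2}$) has phase tending to a value $\geq\tfrac12$, strictly larger than $\phi(F'')\to i$, so seesaw yields $\phi(F'^{(2)})\preceq\phi(F'')$ (resp.\ $\phi(F')\preceq\phi(F'')$), and the rest of your argument goes through.
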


\begin{proof}
Given Lemma \ref{lem:Lo14Lem6-5analogue}, the argument in the proof of \cite[Lemma 6.6]{Lo14}  applies.
\end{proof}

\begin{thm}\label{thm:main1}
The Harder-Narasimhan property holds for $Z^l$-stability on $\Bl$.  That is, every object $F \in \Bl$ admits a filtration in $\Bl$
\[
  F_0 \subseteq F_1 \subseteq \cdots \subseteq F_n = F
\]
where each $F_i/F_{i+1}$ is $Z^l$-semistable, and $\phi (F_i/F_{i-1}) \succ \phi (F_{i+1}/F_i)$ for each $i$.
\end{thm}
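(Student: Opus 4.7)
The plan is to combine the torsion triple $(\Ac_1, \Ac_{1/2}, \Ac_0)$ of \eqref{eq:torsiontripleAlowerstar-v2} with the finiteness properties in Proposition \ref{prop:Alowerstarfinitenessproperties} and the characterisation of $Z^l$-semistability in Lemma \ref{lem:Lo14Lem6-6analogue} to produce HN filtrations inside each $\Ac_i$, then concatenate them.

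First, by the torsion triple, every $F \in \Bl$ admits a three-step filtration in $\Bl$
\[
  0 = G_0 \subseteq G_1 \subseteq G_2 \subseteq G_3 = F
\]
with $G_1 \in \Ac_1$, $G_2/G_1 \in \Ac_{1/2}$, and $G_3/G_2 \in \Ac_0$. By Lemma \ref{lem:Lo14Lem6-5analogue}, the successive quotients have phases tending to $1, \tfrac12, 0$, so they already satisfy the strict decrease $\phi(G_1) \succ \phi(G_2/G_1) \succ \phi(G_3/G_2)$ required by the HN property. Thus it suffices to refine each $G_i/G_{i-1}$ into a filtration of $Z^l$-semistable objects inside its ambient category $\Ac_i$, since extensions in $\Bl$ by objects with strictly separated $\phi$-values do not interfere: a destabiliser of the whole would either restrict to $\Ac_i$ (handled internally) or span across two components, which is ruled out by the phase gap together with Lemma \ref{lem:Lo14Lem6-6analogue}.

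The core step is therefore to establish the HN property internally in each $\Ac_i$ for $i \in \{1, \tfrac12, 0\}$. Fix such an $i$ and a nonzero $E \in \Ac_i$. I would follow the standard scheme of Bridgeland, adapted to the polynomial phase $\phi$ (where comparisons are made in the asymptotic sense $\prec, \preceq$ for $v \gg 0$ along \eqref{eq:vhyperbolaequation}). Call a strict $\Ac_i$-subobject $E' \subseteq E$ \emph{destabilising} if $\phi(E') \succ \phi(E/E')$, equivalently $\phi(E') \succ \phi(E)$. Among all destabilising subobjects, pick a maximal one $E_1 \subseteq E$ as follows: any ascending chain of destabilising subobjects stabilises by Proposition \ref{prop:Alowerstarfinitenessproperties}(a) applied to $\Ac_i$, so Zorn's lemma yields a maximal destabiliser $E_1$. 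A routine argument then shows $E_1$ is $Z^l$-semistable (any strict $\Ac_i$-subobject $E_1' \subsetneq E_1$ with $\phi(E_1') \succ \phi(E_1)$ would have $\phi(E_1') \succ \phi(E)$, contradicting maximality after comparing $E_1'$ and $E_1 + E_1'$ in the usual way) and that $\phi(E_1) \succ \phi(E/E_1)$. Iterating on the quotient $E/E_1$ gives a descending chain of strict $\Ac_i$-epimorphisms, which must terminate by Proposition \ref{prop:Alowerstarfinitenessproperties}(b). Concatenating the three internal HN filtrations with the outer three-step filtration yields the desired HN filtration of $F$.

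The main obstacle is the usual one in this framework: the phase $\phi$ is not a number but a germ of a function in $v$, so the comparisons $\prec, \preceq$ must be handled carefully to ensure both the existence of a maximal destabiliser and the strictness of the phase inequalities in the final filtration. Concretely, one must check that the $\prec$-relation is transitive on the finite collections of objects produced in each step (which holds because $Z_\omega(\,\cdot\,)$ is a Laurent polynomial in the shear variable $v'$ of Remark \ref{rem:changeofvar}, so only finitely many sign changes can occur), and that an ascending chain of strict $\Ac_i$-subobjects with strictly increasing phases really is forbidden by Proposition \ref{prop:Alowerstarfinitenessproperties}. Once these verifications are in place, the remainder of the argument is the standard Bridgeland/Bayer HN construction, essentially as carried out for the threefold case in \cite[Theorem 6.7]{Lo14}.
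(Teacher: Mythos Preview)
Your proposal is correct and follows essentially the same approach as the paper: the paper's own proof simply invokes the torsion triple \eqref{eq:torsiontripleAlowerstar-v2}, Proposition \ref{prop:Alowerstarfinitenessproperties}, and Lemma \ref{lem:Lo14Lem6-6analogue}, then defers to the argument of \cite[Theorem 6.7]{Lo14}, which is exactly the construction you have sketched. Your write-up is in fact more detailed than the paper's, spelling out the internal HN construction in each $\Ac_i$ and the concatenation step that the paper leaves implicit in its citation.
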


\begin{proof}
Using the torsion triple \eqref{eq:torsiontripleAlowerstar-v2},  the finiteness properties in Proposition \ref{prop:Alowerstarfinitenessproperties}, along with Lemma  \ref{lem:Lo14Lem6-6analogue}, the  argument  in the proof of \cite[Theorem 6.7]{Lo14} applies.
\end{proof}

\section{Transforms of 1-dimensional sheaves}\label{sec:trans1dimsh}
In this section,  we study the stability of the Fourier-Mukai transforms of 1-dimensional sheaves. Heuristically, we will need to impose some type of stability on our 1-dimensional sheaves to deduce the $Z^l$-stability of their transforms as in Section \ref{sec:maintheorem}. Luckily, we have another type of stability at our disposal, $Z_{\oo,\frac{e}{2}f}$-semistability, where $\oo$ is as \eqref{eq:oonotation}. Since the Bridgeland slope function for 1-dimensional sheaves becomes
$$
\frac{\ch_2-\frac{e}{2}\ch_1\cdot f}{\beta\ch_1\cdot\widetilde{\omega}},
$$
where $\widetilde{\omega}=\frac{1}{\alpha}(\Theta+mf)+f$, then this type of stability when tested on the subsheaves of a 1-dimensional sheaf does not depend on $\beta$. If $\ch$ is the Chern character of a 1-dimensional sheaf then by \cite[Theorem 1.1]{LQ} we know that the only Bridgeland semistable objects with Chern character $\ch$ for $\beta\gg 0$ are 1-dimensional sheaves and moreover the condition for semistability only needs to be checked on subsheaves. The following definition is in place:

\begin{defn} Consider the $\QQ$-line bundle $L=p^\ast \omega_B /2$. We say a pure 1-dimensional sheaf $\mathcal{E}$ in $\Coh(X)$ is  $L$-twisted $\oo$-Gieseker semistable, or simply twisted Gieseker semistable,  if for every subsheaf $A\hookrightarrow \mathcal{E}$ we have
$$
\frac{\chi_{L}(A)}{\ch_1(A)\cdot \oo}\leq \frac{\chi_{L}(\mathcal{E})}{\ch_1(\mathcal{E})\cdot \oo},
$$
where the $L$-twisted Euler characteristic is defined by
$$
\chi_{L}(E):=\chi(E\otimes L)=\ch_2(E)-\frac{e}{2}\ch_1(E)\cdot f+\ch_0(E)\chi(\mathcal{O}_X)
$$
for every $E\in\Coh(X)$.
\end{defn}
\begin{prop} Let $\mathcal{E}$ be a twisted Gieseker semistable 1-dimensional sheaf with $\chi_L(\mathcal{E})\geq 0$ and $\ch_1(\mathcal{E})\cdot f>0$. Then $\mathcal{E}$ is $\Phi$-WIT$_0$. Moreover, $\Phi(\mathcal{E})$ is torsion-free for $\alpha+m\gg 0$.
\end{prop}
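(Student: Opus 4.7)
The strategy rests on two main ingredients: the torsion pair $(W_{0,\Phi}, W_{1,\Phi})$ on $\Coh(X)$ (the symmetric counterpart of the one used in \ref{para:TlFlproperties}), and the twisted Gieseker semistability of $\mathcal{E}$ combined with the sign constraints $\chi_L(\mathcal{E})\geq 0$ and $\ch_1(\mathcal{E})\cdot f>0$. For the first claim, let $0\to A\to\mathcal{E}\to B\to 0$ be the decomposition arising from this torsion pair, with $A\in W_{0,\Phi}$ and $B\in W_{1,\Phi}$; the aim is to show $B=0$. First, $B$ is pure 1-dimensional, because any 0-dimensional subsheaf of $B$ would be $\Phi$-WIT$_0$, contradicting $\Hom(W_{0,\Phi},W_{1,\Phi})=0$. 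Second, $B$ is a fiber sheaf: from \eqref{eq:cohomFMT-Phi}, $\ch_0(\Phi B)=f\ch_1(B)$, and since $\Phi B=\wh{B}[-1]$, we have $f\ch_1(B)=-\ch_0(\wh{B})\leq 0$; combined with $f\ch_1(B)\geq 0$ (as $B$ is effective and $f$ is nef), this forces $f\ch_1(B)=0$. Third, $B|_{X_b}$ on smooth fibers is $\Phi_b$-WIT$_1$, hence fiberwise semistable of slope $\leq 0$, giving $\chi_L(B)=\ch_2(B)\leq 0$. Applying twisted Gieseker semistability of $\mathcal{E}$ to the surjection $\mathcal{E}\twoheadrightarrow B$ yields
\[
\frac{\chi_L(\mathcal{E})}{\ch_1(\mathcal{E})\cdot\oo} \leq \frac{\chi_L(B)}{\ch_1(B)\cdot\oo} \leq 0,
\]
which together with $\chi_L(\mathcal{E})\geq 0$ forces both sides to vanish, and in the principal case $\chi_L(\mathcal{E})>0$ this is an outright contradiction giving $B=0$.

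For the torsion-freeness of $\Phi\mathcal{E}$, take any torsion subsheaf $T\hookrightarrow\Phi\mathcal{E}$ and decompose $0\to T_0\to T\to T_1\to 0$ with $T_i\in W_{i,\whPhi}$. Using $\whPhi\Phi=\mathrm{id}[-1]$, adjunction yields $\Hom(T_0,\Phi\mathcal{E})\cong\Ext^{-1}(\wh{T_0},\mathcal{E})=0$, while $\Hom(T_1,\Phi\mathcal{E})\cong\Hom(\wh{T_1},\mathcal{E})$, reducing the problem to showing $\Hom(\wh{T_1},\mathcal{E})=0$ for every nonzero $T_1\in W_{1,\whPhi}\cap\Coh^{\leq 1}(X)$. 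By the symmetric version of the Step~1 analysis, such $T_1$ is pure 1-dimensional fiber, fiberwise semistable of slope $\mu\leq 0$, and its transform $\wh{T_1}$ is either 0-dimensional (if $\mu=0$) or a fiber sheaf of fiberwise slope $-1/\mu>0$ (if $\mu<0$). The first case forces $\Hom(\wh{T_1},\mathcal{E})=0$ since $\mathcal{E}$ is pure 1-dimensional. In the second case, the image $F$ of any nonzero map $\wh{T_1}\to\mathcal{E}$ would be a fiber subsheaf of $\mathcal{E}$ with $\chi_L(F)>0$ (being a quotient of a positive-slope fiber-semistable sheaf). Writing $\oo=\beta(\tfrac{1}{\alpha}(\Theta+mf)+f)$, one computes that $\ch_1(F)\cdot\oo=O(1/\alpha)$ while $\ch_1(\mathcal{E})\cdot\oo$ remains $O(1)$ as $\alpha\to\infty$, so the inequality
\[
\frac{\chi_L(F)}{\ch_1(F)\cdot\oo}\leq\frac{\chi_L(\mathcal{E})}{\ch_1(\mathcal{E})\cdot\oo}
\]
required by twisted Gieseker semistability fails for $\alpha+m$ large enough, forcing $F=0$.

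The principal difficulty lies in the borderline case $\chi_L(\mathcal{E})=0$ of the first step, where the Gieseker inequality becomes an equality and does not by itself eliminate a fiber-semistable slope-$0$ $\Phi$-WIT$_1$ factor of $\mathcal{E}$; closing this edge case requires a finer analysis of the Jordan--H\"older filtration of $\mathcal{E}$ (noting that any such factor would contribute a direct summand of the form $\iota_{b,\ast}V$ with $V$ slope-zero semistable on $X_b$, which can often be excluded by purity or connectedness of the support). The torsion-freeness step, by contrast, is conceptually more robust: it relies on the asymptotic behavior of $\oo$ along the volume section \eqref{eq:vhyperbolaequation} as $\alpha+m\to\infty$, echoing the limit-of-stability mechanism developed in Sections \ref{sec:maintheorem} and \ref{sec:HNproperty}.
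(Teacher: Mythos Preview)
Your overall strategy matches the paper's, but there is a genuine gap in the first part and an unjustified step in the second.

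\textbf{First part.} Your argument for $B=0$ via the quotient inequality
\[
\frac{\chi_L(\mathcal{E})}{\ch_1(\mathcal{E})\cdot\oo}\leq\frac{\chi_L(B)}{\ch_1(B)\cdot\oo}\leq 0
\]
leaves the case $\chi_L(\mathcal{E})=0$ open, as you yourself note; the suggestion to close it by ``finer analysis of the Jordan--H\"older filtration'' is vague. The paper's fix is simple and you are missing it: since $W_{0,\Phi}$ is closed under extensions, one first passes via the Jordan--H\"older filtration to the case where $\mathcal{E}$ is twisted Gieseker \emph{stable}. Then, instead of testing on the quotient $E_1$, one tests on the \emph{subsheaf} $E_0$ (which is nonzero since $f\ch_1(\mathcal{E})>0=f\ch_1(E_1)$): stability gives the strict inequality
\[
\frac{\chi_L(E_0)}{\ch_1(E_0)\cdot\oo}<\frac{\chi_L(\mathcal{E})}{\ch_1(\mathcal{E})\cdot\oo},
\]
which rearranges to $\chi_L(E_0)(\ch_1(E_1)\cdot\oo)<\ch_2(E_1)(\ch_1(E_0)\cdot\oo)$. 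The left side is $\geq 0$ (since $\chi_L(E_0)=\chi_L(\mathcal{E})-\ch_2(E_1)\geq 0$) and the right side is $\leq 0$, a clean contradiction that works uniformly in $\chi_L(\mathcal{E})\geq 0$.

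\textbf{Second part.} Your reduction to $T\in W_{1,\whPhi}$ being a pure fiber sheaf is correct, and the asymptotic contradiction for $\alpha+m\gg 0$ is the right endgame. But two steps are loose. First, the claim that $\wh{T_1}$ is ``fiberwise semistable'' is unjustified (global $\whPhi$-WIT$_1$ only says that the $\mu$-HN factors on each fiber have slope $\leq 0$), and the deduction that the image $F$ has $\chi_L(F)>0$ from ``$F$ is a quotient of a positive-slope fiber-semistable sheaf'' does not follow as stated. The paper secures $\ch_2(F)>0$ by a different route: since $W_{0,\Phi}$ is a torsion class, the image $F=\mathrm{Im}(g)$ is itself $\Phi$-WIT$_0$; a short cohomology computation then shows that $\Phi(F)$ is a nonzero $\whPhi$-WIT$_1$ torsion subsheaf of $\Phi\mathcal{E}$, hence a fiber sheaf with $\Theta\ch_1(\Phi F)>0$, which by the cohomological FMT equals $\ch_2(F)$. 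Second, for the asymptotic inequality to yield a threshold depending only on $\ch(\mathcal{E})$ (and not on the particular torsion subsheaf), you need a uniform bound on $r=\Theta\ch_1(F)$; the paper obtains this via $r\leq (\Theta+m_0f)\ch_1(\mathcal{E})$ for a fixed nef boundary class $\Theta+m_0f$.
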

\begin{proof}
Since $W_{0,\Phi}$ is closed under extensions then by using the Jordan-Holder filtration of $\mathcal{E}$ with respect to twisted Gieseker semistability we may assume that $\mathcal{E}$ is twisted Gieseker stable.

Using that $(W_{0,\Phi},W_{1,\Phi})$ is a torsion pair in $\Coh(X)$ we can write a short exact sequence
$$
0\rightarrow E_0\rightarrow \mathcal{E}\rightarrow E_1\rightarrow 0
$$
where $E_i$ is a $\Phi$-WIT$_i$ sheaf for $i=0,1$.

Notice that $E_1\in \Coh^{\leq 1}(X)$ and so $\ch_1(E_1)\cdot f=0$ by \cite[Lemma 6.3]{FMTes}. This implies that $E_1$ is either $0$ or $\ch_1(E_1)=\alpha' f$ for some $\alpha'>0$ since otherwise $E_1$ would be 0-dimensional and therefore $\Phi$-WIT$_0$.

Assume that $E_1\neq 0$. Since $\ch_1(E_1)\cdot f=0$ then $\Phi(E_1)[1]\in \Coh^{\leq 1}(X)$ is a sheaf supported on fibers and therefore
$$
\ch_1(\Phi(E_1)[1])\cdot\Theta=-\ch_2(E_1)\geq 0.
$$
The twisted Gieseker stability of $\mathcal{E}$ implies that
$$
\frac{\chi_L(E_0)}{\ch_1(E_0)\cdot\oo}<\frac{\chi_L(\mathcal{E})}{\ch_1(\mathcal{E})\cdot\oo}=\frac{\chi_{L}(E_0)+\ch_2(E_1)}{\ch_1(\mathcal{E})\cdot\oo}
$$
and therefore
$$
\chi_L(E_0)(\ch_1(E_1)\cdot\oo)<\ch_2(E_1)(\ch_1(E_0)\cdot\oo),
$$
a contradiction since $\chi_L(E_0)\geq 0$. Thus $E_1=0$ and $\mathcal{E}$ is $\Phi$-WIT$_0$.

Now, suppose that $\Phi(\mathcal{E})$ is not torsion-free and let $T$ be its torsion subsheaf so that we have a short exact sequence
$$
0\rightarrow T\rightarrow\Phi(\mathcal{E})\rightarrow F\rightarrow 0
$$
in $\Coh(X)$. Applying ${\whPhi}[1]$ we obtain the distinguished triangle
$$
{\whPhi}(T)[1]\rightarrow \mathcal{E}\rightarrow {\whPhi}(F)[1]\rightarrow {\whPhi}(T)[2].
$$
Since $\mathcal{E}$ is a sheaf then $T$ is ${\whPhi}$-WIT$_1$ and so 1-dimensional. Moreover, $T$ must be supported on fibers, i.e.,
$$
\ch_1(T)=af,\ \ \text{for some}\ \ a>0.
$$
Consider the morphism $g\colon {\whPhi}(T)[1]\rightarrow \mathcal{E}$. The subsheaf $\mathrm{Im}(g)\subseteq \mathcal{E}$ is also 1-dimensional and supported on fibers, i.e.,
$$
\ch_1(\mathrm{Im}(g))=rf\ \ \text{with}\ \ r>0.
$$
Since ${\whPhi}(T)[1]$ is $\Phi$-WIT$_0$ then so is $\mathrm{Im}(g)$. A simple cohomology computation then shows that $\Phi(\mathrm{Im}(g))$ is a subsheaf of $\Phi(\mathcal{E})$ and so must be 1-dimensional and supported on fibers, i.e.,
$$
\ch_1(\Phi(\mathrm{Im}(g)))\cdot\Theta=\ch_2(\mathrm{Im}(g))>0.
$$
Now, from the twisted Gieseker semistability of $\mathcal{E}$ it follows that
$$
\frac{\chi_L(\mathrm{Im}(g))}{\ch_1(\mathrm{Im}(g))\cdot\oo}=\frac{\alpha\ch_2(\mathrm{Im}(g))}{\beta r}\leq \frac{\chi_L(\mathcal{E})}{\ch_1(\mathcal{E})\cdot \oo}.
$$
Fix $m_0>0$ such that $\Theta+m_0 f$ is in the boundary of the nef cone, then $r\leq \ch_1(\mathcal{E})\cdot \Theta+m_0 \ch_1(\mathcal{E})\cdot f$ and so
$$
\frac{\alpha}{\beta \ch_1(\mathcal{E})\cdot(m_0f+\Theta)}\leq \frac{\chi_L(\mathcal{E})}{ \ch_1(\mathcal{E})\cdot\oo },
$$
which is impossible if
$$
\ch_1(\mathcal{E})\cdot\frac{\oo}{\beta}>\frac{\chi_L(\mathcal{E})\ch_1(\mathcal{E})\cdot(\Theta+m_0f)}{\alpha}.
$$
This last inequality is equivalent to
$$
\alpha+m>\frac{\ch_1(\mathcal{E})\cdot\Theta }{\ch_1(\mathcal{E})\cdot f}(\chi_L(\mathcal{E})-1)+m_0\cdot \chi_L(\mathcal{E}).
$$
\end{proof}

\begin{rem} Let $\mathcal{E}$ be a twisted Gieseker semistable 1-dimensional sheaf with $\chi_L(\mathcal{E})\geq 0$ and $\ch_1(\mathcal{E})\cdot f>0$. Notice that if $\alpha+m\gg 0$ then the torsion-free sheaf $\Phi(\mathcal{E})$ is $\mu_f$-semistable. Indeed, if
$$
0\rightarrow E''\rightarrow \Phi(\mathcal{E})\rightarrow E'\rightarrow 0
$$
is a short exact sequence in $\Coh(X)$ then $E''$ is ${\whPhi}$-WIT$_1$ since $\Phi(\mathcal{E})$ is ${\whPhi}$-WIT$_1$. Therefore, by \cite[Lemma 6.2]{FMTes} $\mu_f(E'')\leq 0$.
\end{rem}
\begin{prop}\label{E is in Tau ell}
Let $\mathcal{E}$ be a twisted Gieseker semistable 1-dimensional sheaf with $\chi_L(\mathcal{E})\geq 0$ and $\ch_1(\mathcal{E})\cdot f>0$, and assume that $\alpha+m\gg 0$. Then $\Phi(\mathcal{E})\in \mathcal{T}^{l}$.
\end{prop}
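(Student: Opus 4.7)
The plan is to verify condition (2)(c) of Lemma~\ref{lem:TclFcldefinitions}: every nonzero sheaf quotient $F := \Phi(\mathcal{E}) \twoheadrightarrow A$ satisfies either $\mu_f(A) > 0$, or $\mu_f(A) = 0$ and $\mu_{\Theta+mf}(A) > 0$. From the preceding proposition and remark, for $\alpha + m \gg 0$ the sheaf $F$ is torsion-free and $\mu_f$-semistable, while \eqref{eq:cohomFMT-Phi} gives $f\ch_1(F) = -\ch_0(\mathcal{E}) = 0$, so $\mu_f(F) = 0$. Thus any nonzero quotient $A$ automatically has $\mu_f(A) \geq 0$, and $\mu_f(A) = +\infty$ whenever $\ch_0(A) = 0$. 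The only case that is not immediate is
\[
\ch_0(A) > 0, \qquad f\ch_1(A) = 0,
\]
in which the condition to verify reduces to $c := \Theta\ch_1(A) > 0$.

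In this remaining case, set $E'' := \ker(F \to A)$. Since $F$ is $\whPhi$-WIT$_1$ and $W_{1,\whPhi}$ is closed under subsheaves (immediate from the long exact sequence for $\whPhi$), the subsheaf $E''$ is also $\whPhi$-WIT$_1$. Applying $\whPhi$ to the short exact sequence $0 \to E'' \to F \to A \to 0$ then produces the four-term sequence of coherent sheaves
\[
0 \to \whPhi^0 A \to \whPhi^1 E'' \to \mathcal{E} \to \whPhi^1 A \to 0.
\]
A direct computation with \eqref{eq::cohomFMT-hatPhi} yields $\ch_0(\whPhi^1 A) = 0$ and $f\ch_1(\whPhi^1 A) = \ch_0(A) + f\ch_1(\whPhi^0 A) \geq \ch_0(A) > 0$, so $\whPhi^1 A$ is a 1-dimensional quotient sheaf of $\mathcal{E}$ with $\ch_1(\whPhi^1 A)\cdot \oo > 0$. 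The same computation gives the key identity
\[
\chi_L(\whPhi^1 A) = c + \chi_L(\whPhi^0 A).
\]

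The closing step uses the twisted Gieseker semistability of $\mathcal{E}$ applied to the quotient $\mathcal{E} \twoheadrightarrow \whPhi^1 A$: combined with $\chi_L(\mathcal{E}) \geq 0$ this yields
\[
\chi_L(\whPhi^1 A) \;\geq\; \frac{\chi_L(\mathcal{E})}{\ch_1(\mathcal{E})\cdot \oo}\,\ch_1(\whPhi^1 A)\cdot \oo \;\geq\; 0,
\]
and by the identity above $c \geq -\chi_L(\whPhi^0 A)$. To upgrade this to the strict inequality $c > 0$, I would exploit the inclusion $\whPhi^0 A \hookrightarrow \whPhi^1 E''$, together with the dual computation $\chi_L(\whPhi^1 E'') = \Theta\ch_1(E'')$, and apply the twisted Gieseker inequality to the subsheaf $\image(\whPhi^1 E'' \to \mathcal{E}) \subseteq \mathcal{E}$ to bound $\chi_L(\whPhi^0 A)$ from above. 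The main obstacle is precisely this last strict-inequality step: the hypothesis $\alpha + m \gg 0$ must enter through the volume-section asymptotics so that the Gieseker bounds applied simultaneously to the subsheaf and the quotient of $\mathcal{E}$ are sharp enough to force $c > 0$ rather than merely $c \geq 0$.
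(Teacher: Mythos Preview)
Your overall setup is correct, and the identity $\chi_L(\whPhi^1 A)=c+\chi_L(\whPhi^0 A)$ is right.  But the final step is a genuine gap, and your proposed way out---invoking $\alpha+m\gg 0$ via the volume-section asymptotics---is not how the paper closes it; in fact the hypothesis $\alpha+m\gg 0$ is not used again in the proof of this proposition (it is only needed earlier, to make $\Phi(\mathcal{E})$ torsion-free).

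The paper supplies two ideas you are missing.  First, it reduces at the outset to the case where $\mathcal{E}$ is twisted Gieseker \emph{stable}, using the Jordan--H\"older filtration together with the fact that $\mathcal{T}^l$ is closed under extensions; this is what turns the Gieseker inequality for a proper quotient of $\mathcal{E}$ into a \emph{strict} inequality.  Second, rather than applying $\whPhi$ to the whole sequence and wrestling with both $\whPhi^0A$ and $\whPhi^1A$ at once, the paper decomposes the quotient $A$ itself via the torsion pair $(W_{0,\whPhi},W_{1,\whPhi})$, writing $0\to A_0\to A\to A_1\to 0$.  The $\whPhi$-WIT$_1$ piece $A_1$ is again a quotient of $\Phi(\mathcal{E})$, and $\whPhi A_1[1]$ is a genuine quotient sheaf of $\mathcal{E}$; stability then gives $\chi_L(\whPhi A_1[1])>0$, and the same Chern-character computation you did shows this equals $\Theta\ch_1(A_1)$.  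For the $\whPhi$-WIT$_0$ piece $A_0$, one observes that $\whPhi A_0$ has rank $f\ch_1(A_0)=0$, hence $\ch_1(\whPhi A_0)$ is effective, and $f\ch_1(\whPhi A_0)=-\ch_0(A_0)\geq 0$ forces $\ch_0(A_0)=0$; thus $A_0$ is a torsion sheaf with $f\ch_1(A_0)=0$, i.e.\ a fiber sheaf, so $\Theta\ch_1(A_0)\geq 0$.  Adding the two contributions gives $\Theta\ch_1(A)>0$.

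In short: your four-term sequence mixes the WIT$_0$ and WIT$_1$ behaviour of $A$, and the term $\chi_L(\whPhi^0A)$ you cannot control is precisely the obstruction that the torsion-pair decomposition removes.
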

\begin{proof}
Assume for the moment that $\mathcal{E}$ is stable. Since $\Phi(\mathcal{E})$ is $\mu_f$-semistable then by Lemma 3.1 we only need to prove that for every short exact sequence
$$
0\rightarrow E''\rightarrow \Phi(\mathcal{E})\rightarrow E'\rightarrow 0
$$
in $\Coh(X)$ with $\mu_f(E'')=\mu_f(E')=0$ we have $\ch_1(E')\cdot\Theta>0$.

If $E'$ is ${\whPhi}$-WIT$_1$ then ${\whPhi}(E')[1]$ is a quotient of $\mathcal{E}$ and therefore
$$
\frac{\chi_L({\whPhi}(E')[1])}{\ch_1({\whPhi}(E')[1])\cdot \oo}> \frac{\chi_L(\mathcal{E})}{\ch_1(\mathcal{E})\cdot\oo}\geq 0.
$$
This implies that
\begin{align*}
\chi_L({\whPhi}(E')[1])&=-\ch_2({\whPhi}(E'))+\frac{e}{2}\ch_1({\whPhi}(E'))\cdot f\\
&=\ch_1(E')\cdot \Theta+\frac{e}{2}\ch_0(E')+\frac{e}{2}(-\ch_0(E'))\\
&=\ch_1(E')\cdot \Theta>0.
\end{align*}
If $E'$ is not ${\whPhi}$-WIT$_1$ then we know that there is a short exact sequence in $\Coh(X)$
$$
0\rightarrow E_0\rightarrow E'\rightarrow E_1\rightarrow 0
$$
with $E_i$ a ${\whPhi}$-WIT$_i$ sheaf for $i=0,1$. Thus $\ch_1(E_1)\cdot \Theta>0$.

From the $\mu_f$-semistability of $\Phi(\mathcal{E})$ we know that $E'$ is also $\mu_f$-semistable and so by \cite[Lemma 6.2]{FMTes} we conclude that $\ch_1(E_0)\cdot f=\ch_1(E_1)\cdot f=0$. Thus $\ch_0({\whPhi}(E_0))=0$ and so $\ch_1({\whPhi}(E_0))$ is effective. This implies that
$$
\ch_1({\whPhi}(E_0))\cdot f=-\ch_0(E_0)\geq 0.
$$
Therefore $E_0$ is torsion and $\ch_1(E_0)\cdot\Theta\geq 0$ implying that $\ch_1(E')\cdot\Theta>0$.

To conclude the proof, notice that if $\mathcal{E}$ is strictly semistable then $\mathcal{E}$ is in the extension closure of finitely many 1-dimensional stable sheaves each of which is sent via $\Phi$ to an object in $\mathcal{T}^{l}$. Thus $\Phi(\mathcal{E})\in\mathcal{T}^{l}$.
\end{proof}
\begin{thm}\label{assymp_one_dim} Let $\mathcal{E}$ be a twisted Gieseker semistable 1-dimensional sheaf with $\chi_L(\mathcal{E})\geq 0$ and $\ch_1(\mathcal{E})\cdot f>0$, and assume that $\alpha+m\gg 0$. Then $\Phi(\mathcal{E})$ is $Z^{l}$-semistable.
\end{thm}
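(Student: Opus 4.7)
The plan is to verify $Z^l$-semistability of $\Phi(\mathcal{E})$ by placing it in an appropriate block of the torsion triple \eqref{eq:torsiontripleAlowerstar-v2} and then invoking Lemma \ref{lem:Lo14Lem6-6analogue}. First, by passing to the Jordan–Hölder filtration of $\mathcal{E}$ with respect to twisted Gieseker semistability, and using that extensions of same-phase $Z^l$-semistable objects remain $Z^l$-semistable, I reduce to $\mathcal{E}$ being twisted Gieseker stable. Proposition \ref{E is in Tau ell} then gives $\Phi(\mathcal{E})\in\Tc^l\subseteq\mathcal{B}^l$, and a direct computation with \eqref{eq:cohomFMT-Phi}, \eqref{eq:cohomFMT-Phi-ch1}, and \eqref{eq:vhyperbolaequation}, together with $\chi_L(\mathcal{E})=\ch_2(\mathcal{E})-\tfrac{e}{2}\ch_1(\mathcal{E})\cdot f$ (since $\ch_0(\mathcal{E})=0$), yields
\[
  Z_\omega(\Phi(\mathcal{E}))=\Theta\ch_1(\mathcal{E})+(\alpha+m)f\ch_1(\mathcal{E})+iu\,\chi_L(\mathcal{E}).
\]
For $\alpha+m\gg 0$ the real part is positive and $O(1)$, while the imaginary part is nonnegative and $O(u)=O(1/v)$ along the volume section, so $\phi(\Phi(\mathcal{E}))\to 0$.

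Next, I would verify $\Phi(\mathcal{E})\in\mathcal{A}_0=\mathcal{A}_{1,1/2}^\circ$. Using the list of generators of $\mathcal{A}_{1,1/2}$ in \eqref{eq:A1and1over2definition}, this reduces to checking that no nonzero morphism into $\Phi(\mathcal{E})$ arises from $\Fc^l[1]$ or from any of the blocks $\scalea{\gyoung(;;,;;+)}$, $\scalea{\gyoung(;;+,;;+)}$, $\scalea{\gyoung(;;+,;;0)}$, $\scalea{\gyoung(;;*,;+;*)}$, $\scalea{\gyoung(;+;*,;+;*)}$. The first vanishing is automatic since $\Phi(\mathcal{E})$ is a sheaf; the zero- and one-dimensional torsion blocks map to zero because $\Phi(\mathcal{E})$ is torsion-free; and for the two-dimensional $\whPhi$-WIT$_0$ block $\scalea{\gyoung(;+;*,;+;*)}$ the identity $\whPhi\Phi(\mathcal{E})=\mathcal{E}[-1]$ together with $\Ext^{-1}(\text{sheaf},\mathcal{E})=0$ produces the vanishing. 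Lemma \ref{lem:Lo14Lem6-6analogue} then reduces the desired $Z^l$-semistability to the inequality $\phi(F')\preceq\phi(\Phi(\mathcal{E}))$ for every strict $\mathcal{A}_0$-monomorphism $0\neq F'\hookrightarrow\Phi(\mathcal{E})$.

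For such $F'$, a cohomology chase (using that $\Phi(\mathcal{E})$ is a sheaf) forces $F'$ itself to be a sheaf, and \eqref{eq:A112cCohinW1Tl} gives $F'\in W_{1,\whPhi}\cap\Tc^l$. Writing $\wh{F'}=\whPhi(F')[1]\in\Coh(X)$ and applying $\whPhi[1]$ to the $\mathcal{B}^l$-short exact sequence $0\to F'\to\Phi(\mathcal{E})\to H\to 0$, the long exact sequence of the resulting triangle produces a morphism $\wh{F'}\to\mathcal{E}$ in $\Coh(X)$ whose image is a subsheaf $\mathcal{Q}\hookrightarrow\mathcal{E}$. Twisted Gieseker semistability of $\mathcal{E}$ bounds $\chi_L(\mathcal{Q})/(\ch_1(\mathcal{Q})\cdot\bar\omega)$ from above by $\chi_L(\mathcal{E})/(\ch_1(\mathcal{E})\cdot\bar\omega)$; via \eqref{eq:cohomFMT-Phi-ch1} this bound translates, up to lower-order terms in $1/v$, into the phase inequality $\phi(F')\preceq\phi(\Phi(\mathcal{E}))$.

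The hard part will be this last step: the two-step filtration of $F'$ from \ref{para:W1xTcldecomposition} (a $\whPhi$-WIT$_1$ fiber-sheaf subsheaf with a $\Tc^{l,0}$ quotient), combined with the possible kernel of $\wh{F'}\to\mathcal{E}$, introduces $\Coh^{\leq 0}$ and fiber-sheaf correction terms that must be controlled so that the Gieseker inequality cleanly implies the phase inequality. This bookkeeping parallels the Fourier–Mukai case analysis in the proof of Theorem \ref{thm:Lo14Thm5-analogue}(B), and the requirement $\alpha+m\gg 0$ is the same kind of lower bound that appears in the proof of Proposition \ref{E is in Tau ell} to ensure torsion-freeness of $\Phi(\mathcal{E})$.
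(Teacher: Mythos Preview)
Your approach is correct and in fact genuinely different from the paper's. The paper argues directly by contradiction: given a destabilising $\mathcal{B}^l$-sequence $0\to A\to\Phi(\mathcal{E})\to B\to 0$ with $B$ chosen $Z^l$-stable, it first shows $B$ must be a sheaf (otherwise $H^{-1}(B)[1]$ would destabilise $B$), then chases through $\whPhi$-cohomology to force $\whPhi^0(B)=0$, and finally arrives at a short exact sequence in $W_{1,\whPhi}$ whose $\whPhi[1]$-image contradicts Gieseker semistability via the slope identity at the end of the paper's proof. Your route instead exploits the torsion triple of Section~\ref{sec:HNproperty}: you place $\Phi(\mathcal{E})$ in $\mathcal{A}_0$ and invoke Lemma~\ref{lem:Lo14Lem6-6analogue}, which packages all of the paper's ad hoc reductions into the single structural fact that strict $\mathcal{A}_0$-monomorphisms already live inside $W_{1,\whPhi}\cap\Tc^l$. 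What your approach buys is a cleaner endgame; what the paper's approach buys is self-containment (it does not need the HN machinery of Section~\ref{sec:HNproperty}).

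Two remarks on your execution. First, the ``hard part'' you anticipate does not materialise: since the quotient $H$ in your $\mathcal{A}_0$-sequence also lies in $\mathcal{A}_0\subset W_{1,\whPhi}$, you get $\whPhi^0(H)=0$, so $\wh{F'}\hookrightarrow\mathcal{E}$ is already injective and there is no kernel or fiber-sheaf correction to track. The Gieseker inequality on $\wh{F'}\subset\mathcal{E}$ then translates \emph{exactly} (not just to leading order) into the phase inequality, via the same identity the paper records: $\chi_L(\wh{F'})=\Theta\ch_1(F')$ and $\tfrac{\alpha}{\beta}\ch_1(\wh{F'})\cdot\oo=\Re Z_\omega(F')$, so both sides of the Gieseker inequality are positive multiples of $\Im Z_\omega/\Re Z_\omega$. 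Second, your reduction to the stable case is unnecessary: only \emph{semistability} of $\mathcal{E}$ is used in the final comparison, so you may delete the Jordan--H\"older step entirely.
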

\begin{proof}
We already know by Proposition \ref{E is in Tau ell} that $\Phi(\mathcal{E})\in\mathcal{T}^{l}$. Suppose that there is a $Z^{l}$-destabilizing sequence in $\mathcal{B}^{l}$ for $\Phi(\mathcal{E})$:
\begin{equation}\label{destabilizing sequence}
0\rightarrow A\rightarrow \Phi(\mathcal{E})\rightarrow B\rightarrow 0.
\end{equation}
We may assume that $B$ is $Z^{l}$-stable. Since $\Phi(\mathcal{E})$ is a sheaf then by analyzing the long exact sequence of cohomology sheaves it follows that $A$ is also a sheaf. We want to show that $B$ is a sheaf as well. Indeed, $B$ fits into an exact sequence in $\mathcal{B}^{l}$
$$
0\rightarrow {H}^{-1}(B)[1]\rightarrow B\rightarrow {H}^0(B)\rightarrow 0.
$$
Since $\Phi(\mathcal{E})\in\mathcal{T}^{l,0}$ then $\phi(\Phi(\mathcal{E}))\rightarrow 0$ along the curve \eqref{eq:vhyperbolaequation} and so $\phi(B)\rightarrow 0$ as well. However, from Section \ref{para:phasesofobjects} we know that
$$
\phi({H}^{-1}(B)[1])>0\ \ \ \text{for}\ \ v\gg 0,
$$
a contradiction to our assumption that $B$ is $Z^{l}$-stable. Thus, ${H}^{-1}(B)[1]=0$ and \eqref{destabilizing sequence} is a short exact sequence of sheaves.

Now, from the triangle
$$
{\whPhi}(A)[1]\rightarrow\mathcal{E}\rightarrow {\whPhi}(B)[1]\rightarrow {\whPhi}(A)[2]
$$
we know that $A$ is ${\whPhi}$-WIT$_1$. Moreover, we obtain the long exact sequence of sheaves
$$
\xymatrix{
0\ar[r] & {\whPhi}^0(B)\ar[r] &{\whPhi}(A)[1]\ar[rr]^{g}\ar@{->>}[dr] & & \mathcal{E}\ar[r] & {\whPhi}^1(B)\ar[r] & 0\\
& & &  M\ar@{^{(}->}[ur] & & &
}
$$
where $M=\mathrm{Im}(g)$. Notice that since $\mathcal{E}$ is 1-dimensional then
$$
\ch_0(M)=0=\ch_0({\whPhi}^1(B)).
$$
From Section \ref{para:phasesofobjects} we know that the $Z^{l}$-destabilizing subobjects of $\Phi(\mathcal{E})$ have $\ch_1(A)\cdot f\leq 0$, but since $A\in\mathcal{T}^{l}$ then $\ch_1(A)\cdot f=0$. Thus
$$
\ch_0({\whPhi}^0(B))=\ch_0({\whPhi}(A)[1])=-\ch_1(A)\cdot f=0.
$$
Since $B$ is a sheaf then the torsion sheaf ${\whPhi}^0(B)$ is $\Phi$-WIT$_1$ and so by \cite[Lemma 6.3]{FMTes}
$$
\ch_1({\whPhi}^0(B))\cdot f=0.
$$
Therefore, $\Phi({\whPhi}^0(B))[1]$ is a torsion sheaf and the short exact sequence of sheaves
$$
0\rightarrow \Phi({\whPhi}^0(B))[1]\rightarrow B\rightarrow \Phi({\whPhi}^1(B))\rightarrow 0
$$
is exact in $\mathcal{T}^{l}$. Moreover, by Section \ref{para:phasesofobjects} we know that unless $\Phi({\whPhi}^0(B))[1]=0$, the phase   $\phi(\Phi({\whPhi}^0(B))[1])\rightarrow \frac{1}{2}$ because $\Phi({\whPhi}^0(B))[1]$ is a fiber sheaf. This is a contradiction since $\phi(B)\rightarrow 0$ and $B$ is $Z^{l}$-stable. Therefore, $\Phi({\whPhi}^0(B))[1]=0$ and
$$
0\rightarrow A \rightarrow \Phi(\mathcal{E})\rightarrow B\rightarrow 0
$$
is a short exact sequence in $W_{1,\whPhi}$, contradicting the twisted Gieseker semistability of $\mathcal{E}$ since
$$
\frac{\ch_2(\whPhi(U)[1])-\frac{e}{2}\ch_1(\whPhi(U)[1])\cdot f}{\ch_1(\whPhi(U)[1])\cdot\oo}=-\frac{\alpha\ch_1(U)\cdot \omega}{\beta(\ch_2(U)-\ch_0(U)\frac{\omega^2}{2})}
$$
for all $U\in D^b(X)$ with $\ch_1(U)\cdot f=0$ along the curve \eqref{eq:vhyperbolaequation}.
\end{proof}

\paragraph[Boundedness of Bridgeland walls via Bogomolov inequalities]
From now on we will assume that the Picard rank of $X$ is 2. This assumption will allow us to bound the walls along the curve \eqref{eq:vhyperbolaequation} and so we will be able to conclude not only $Z^l$-semistability but rather $Z_{\omega}$-stability for the Fourier-Mukai transform of a 1-dimesnional twisted Gieseker semistable sheaf. 

Let us start by recalling the following results about Bogomolov type inequalities on surfaces collected in \cite[Section 6]{MSLectures}:
\begin{lem}\label{constantforbogomolov} Let $X$ be a smooth projective surface and $\omega\in N^1(X)$ be an ample real divisor class. Then there exists a constant $C_{\omega}\geq 0$ such that, for every effective divisor $D\subset X$, we have
$$
C_\omega(D\cdot \omega)^2+D^2\geq 0.
$$
\end{lem}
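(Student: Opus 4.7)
The plan is to reduce the problem, via the Hodge Index Theorem, to a compactness statement about a slice of the pseudo-effective cone. Since $X$ is a smooth projective surface, the N\'{e}ron--Severi space $N^1(X)_\RR$ is finite-dimensional, and since $\omega$ is ample with $\omega^2>0$, the intersection form has signature $(1,\rho-1)$. I would start by orthogonally decomposing $N^1(X)_\RR = \RR\omega \oplus \omega^\perp$ with respect to the intersection form, and noting that by Hodge Index the restriction of the form to $\omega^\perp$ is negative definite. Let $q$ be the positive-definite quadratic form $-(\cdot)|_{\omega^\perp}$.

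For any class $D \in N^1(X)_\RR$, write
\[
D = a\omega + E, \qquad a = \frac{D\cdot\omega}{\omega^2}, \quad E \in \omega^\perp,
\]
so that $D^2 = a^2 \omega^2 - q(E)$. The desired inequality $C_\omega(D\cdot\omega)^2 + D^2 \geq 0$ then becomes
\[
q(E) \,\leq\, a^2\omega^2 + C_\omega a^2 (\omega^2)^2.
\]
When $D$ is effective and nonzero, ampleness of $\omega$ gives $D\cdot\omega>0$, hence $a>0$. Thus the problem reduces to showing that the ratio $q(E)/a^2$ is uniformly bounded above as $D$ ranges over all nonzero effective divisors; equivalently, that $q(E)$ is bounded on the set of effective (indeed pseudo-effective) classes $D'$ with $D'\cdot\omega = \omega^2$.

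The key step is to prove that the slice $S = \{D' \in \overline{\mathrm{Eff}}(X) : D'\cdot\omega = \omega^2\}$ is compact in $N^1(X)_\RR$. Closedness is clear since it is the intersection of the closed pseudo-effective cone with a closed affine hyperplane. For boundedness, I would argue by contradiction: if $D_n \in S$ satisfies $\|D_n\| \to \infty$, then by finite-dimensionality the normalised sequence $D_n/\|D_n\|$ has a subsequence converging to a nonzero pseudo-effective class $D_\infty$ with $D_\infty\cdot\omega = 0$. But any nonzero pseudo-effective class has strictly positive intersection with the ample class $\omega$ (first for effective classes by Nakai--Moishezon applied componentwise, then for pseudo-effective classes by taking limits), contradicting $D_\infty\cdot\omega=0$. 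Hence $S$ is compact, and the continuous function $q(E) = q(D'-\omega)$ attains a maximum $M$ on $S$.

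Once boundedness is established, the conclusion is routine: for any effective $D\neq 0$ the scaled class $D/a$ lies in $S$, so $q(E/a)\leq M$, i.e.\ $q(E)\leq Ma^2$, and hence
\[
D^2 \,=\, a^2\omega^2 - q(E) \,\geq\, (\omega^2 - M)a^2 \,=\, \frac{\omega^2-M}{(\omega^2)^2}(D\cdot\omega)^2.
\]
Choosing $C_\omega := \max\!\bigl(0,\,(M-\omega^2)/(\omega^2)^2\bigr)$ yields the desired inequality, and $D=0$ is trivial. The main obstacle is the compactness of $S$; everything else is essentially linear algebra. Note that the compactness step is exactly where the ampleness (not merely nefness) of $\omega$ is used, since it ensures strict positivity of the pairing with nonzero pseudo-effective classes.
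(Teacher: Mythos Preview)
The paper does not actually prove this lemma; it is quoted from \cite[Section 6]{MSLectures} without argument. Your proof is correct and is essentially the standard one: decompose along $\omega$, use Hodge Index to make $\omega^\perp$ negative definite, and bound the orthogonal part via compactness of a hyperplane slice of the pseudo-effective cone.

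One small point deserves tightening. Your justification that a nonzero pseudo-effective class $D_\infty$ satisfies $D_\infty\cdot\omega>0$ reads ``first for effective classes \dots\ then for pseudo-effective classes by taking limits''. Passing to limits only gives $D_\infty\cdot\omega\geq 0$, not strict positivity, so this sentence does not prove what you need. The claim is nevertheless true: since $\omega$ is ample it lies in the interior of the nef cone, so there is an open neighbourhood $U\ni\omega$ consisting of ample classes. For every $\omega'\in U$ and every pseudo-effective $D_\infty$ one has $D_\infty\cdot\omega'\geq 0$; if in addition $D_\infty\cdot\omega=0$, the linear functional $\omega'\mapsto D_\infty\cdot\omega'$ is nonnegative on a neighbourhood of $\omega$ and vanishes at $\omega$, hence is identically zero, forcing $D_\infty=0$ in $N^1(X)_\RR$. (Equivalently: on a surface the pseudo-effective cone is dual to the nef cone, and an interior point of a cone pairs strictly positively with every nonzero element of the dual.) With this correction the compactness step, and hence the whole argument, goes through.
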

\begin{defn} Let $X$ be a smooth projective surface and $\omega,B\in N^1(X)$ with $\omega$ ample. For $E\in D^b(X)$ we define
\begin{align*}
\Delta(E)&:= \ch_1(E)^2-2\ch_0(E)\ch_2(E),\\
\bar{\Delta}_{\omega}^{B}(E)&:= (\ch_1^B(E)\cdot\omega)^2-2\ch_0^B(E)\ch_2^B(E)\omega^2,\\
\Delta^C_{\omega,B}(E)&:=\Delta(E)+C_\omega(\ch_1^B(E)\cdot\omega)^2.
\end{align*}
\end{defn}
\begin{thm}\label{bogomolov} Let $X$ be a smooth projective surface and $\omega,B\in N^1(X)$ with $\omega$ ample. Assume that $E$ is $Z_{\omega,B}$-semistable. Then
$$
\bar{\Delta}_{\omega}^{B}(E)\geq 0\ \ \ \text{and}\ \ \ \Delta^C_{\omega,B}(E)\geq 0.
$$
\end{thm}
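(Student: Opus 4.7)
The plan is to reduce both inequalities to the classical Bogomolov inequality $\Delta(F) \geq 0$ for slope-semistable torsion-free sheaves via a wall-crossing deformation argument. The key algebraic observation is that $\Delta$ is invariant under $B$-twist: a direct computation gives $(\ch_1^B(E))^2 - 2\ch_0(E)\ch_2^B(E) = \Delta(E)$. Using this identity, both target quantities can be rewritten in terms of $\Delta(E)$:
\begin{align*}
  \bar{\Delta}_\omega^B(E) &= \omega^2 \Delta(E) + \bigl[(\ch_1^B(E)\cdot\omega)^2 - \omega^2 (\ch_1^B(E))^2\bigr], \\
  \Delta^C_{\omega,B}(E) &= \Delta(E) + C_\omega (\ch_1^B(E)\cdot\omega)^2.
\end{align*}
The bracketed term in the first line is nonnegative by the Hodge index theorem since $\omega$ is ample; therefore, as soon as $\Delta(E) \geq 0$ holds, both inequalities follow at once (for the second, using $C_\omega \geq 0$).

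The rank-zero case is easy: if $\ch_0(E) = 0$, then $E \in \mathcal{B}_{\omega,B}$ is (up to shift) a torsion sheaf and $\ch_1(E)$ is an effective class on $X$, while $\ch_1^B(E) = \ch_1(E)$. Then $\bar{\Delta}_\omega^B(E) = (\ch_1(E)\cdot\omega)^2 \geq 0$ trivially, and $\Delta^C_{\omega,B}(E) = \ch_1(E)^2 + C_\omega(\ch_1(E)\cdot\omega)^2$ is nonnegative by Lemma \ref{constantforbogomolov} applied to $\ch_1(E)$.

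For the rank-nonzero case, I would prove $\Delta(E) \geq 0$ by a wall-crossing induction. After passing to a $Z_{\omega,B}$-Jordan--Hölder factor, I may assume $E$ is $Z_{\omega,B}$-stable. I would then deform the polarisation along the ray $t\omega$ with $t \in \mathbb{R}_{>0}$ and let $t \to \infty$. Along this path, the wall-and-chamber structure on the space of Bridgeland stability conditions implies that $E$ crosses only finitely many walls before reaching the large volume limit, where the $Z_{t\omega,B}$-semistable objects of nonzero rank are, up to shift, twisted Gieseker semistable torsion-free sheaves; for the latter, the classical Bogomolov inequality gives $\Delta \geq 0$. At each wall encountered along the deformation, I would replace $E$ by a Jordan--Hölder factor of strictly smaller $\omega \cdot \ch_1^B$ (a nonnegative integer for objects in the heart), providing the induction.

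The main obstacle will be the wall-crossing step: one must verify termination of the induction and that the class of objects for which the inequality holds is preserved under taking Jordan--Hölder factors. This is precisely where the auxiliary quadratic form $\Delta^C_{\omega,B}$ (with the constant $C_\omega$ from Lemma \ref{constantforbogomolov}) becomes essential, since it gives a form that is negative definite on the kernel of $Z_{\omega,B}$ and hence yields the support property of $\sigma_{\omega,B}$; this is the content of \cite[Section 6]{MSLectures}, whose argument we follow.
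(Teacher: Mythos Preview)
The paper does not prove this theorem; it simply quotes it from \cite[Section 6]{MSLectures}, so there is no ``paper's own proof'' to compare against.  Your sketch is in the right spirit and ultimately defers to that same reference, but the logical order you propose does not quite work.

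The algebraic reductions you wrote are correct: $\Delta$ is $B$-twist invariant, and both $\bar{\Delta}_\omega^B(E)\ge 0$ and $\Delta^C_{\omega,B}(E)\ge 0$ follow immediately from $\Delta(E)\ge 0$ (plus Hodge index, resp.\ $C_\omega\ge 0$).  The problem is that the wall-crossing induction cannot be run on $\Delta$ itself.  When a rank-$\neq 0$ object $E$ becomes strictly semistable at a wall along the ray $t\omega$, its Jordan--H\"older factors may well have rank zero, and for such a factor $F$ one has $\Delta(F)=\ch_1(F)^2$, which can be negative (any negative curve gives an example).  So the hypothesis ``$Q(F_i)\ge 0$ for all factors'' fails for $Q=\Delta$, and the quadratic-form/support-property lemma you allude to does not apply.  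This is exactly why the form $\Delta^C_{\omega,B}$ is introduced in the first place: the constant $C_\omega$ is chosen via Lemma~\ref{constantforbogomolov} precisely so that $\Delta^C_{\omega,B}\ge 0$ holds on rank-zero effective classes, while the form remains negative definite on $\ker Z_{\omega,B}$.  The induction in \cite[Section 6]{MSLectures} is therefore run directly on $\Delta^C_{\omega,B}$ (and $\bar{\Delta}^B_\omega$), not on $\Delta$; your reduction step, though valid, is a detour that you cannot actually complete.

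Two smaller points.  First, ``replace $E$ by a Jordan--H\"older factor'' is not what happens: one decomposes $E$ into all its factors, applies the induction hypothesis to each, and uses the signature of the quadratic form on the plane spanned by their classes to conclude for $E$; following a single factor loses the original object.  Second, $\omega\cdot\ch_1^B$ is not in general an integer when $\omega,B$ are real classes, so it cannot serve as the well-ordered induction invariant; in \cite{MSLectures} termination is obtained differently.
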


\begin{lem}\label{lem:nefmoricone}
Let $p : X \to B$ be a Weierstra{\ss} elliptic surface with a section $\Theta$, and suppose $X$ has  Picard rank 2. Then the nef cone $\mathrm{Nef}(X)$ is the set of all non-negative linear combinations of $\Theta+ef$ and $f$, while the cone of effective curves $\overline{\mathrm{NE}}(X)$ (i.e.\ the Mori cone) is the set of all non-negative linear combinations of $f$ and $\Theta$.
\end{lem}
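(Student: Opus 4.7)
The plan is to use the intersection numbers $\Theta^2=-e$, $\Theta\cdot f=1$, $f^2=0$ together with the fact that $\mathrm{NS}(X)_\RR$ is two-dimensional, so that a divisor class is nef iff it pairs non-negatively with a set of generators of $\overline{\mathrm{NE}}(X)$, and conversely. Since the Picard rank equals two, by \ref{para:NSX} the classes $\Theta$ and $f$ already form a basis of $\mathrm{NS}(X)_\RR$, so every class can be written as $x\Theta+yf$ for some $x,y\in\RR$.

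First I would identify the Mori cone. The class $f$ is represented by a fiber and $\Theta$ by the section, so both lie in $\overline{\mathrm{NE}}(X)$; hence the cone they span is contained in $\overline{\mathrm{NE}}(X)$. For the reverse inclusion it suffices to check that every integral curve $C\subset X$ lies in this cone. If $C$ is contained in a fiber of $p$, then, because $p$ is Weierstra\ss{} and every fiber is geometrically integral, we must have $C=f_b$ for some $b\in B$, whence $[C]=f$. Otherwise $p|_C\colon C\to B$ is surjective, so $C\cdot f=\deg(p|_C)\geq 1$. Writing $C\equiv a\Theta+bf$, this gives $a=C\cdot f\geq 1$. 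If in addition $C\neq\Theta$, then since $\Theta$ is irreducible and distinct from $C$, the intersection $C\cdot\Theta\geq 0$ is a proper intersection of two different irreducible curves, yielding $b-ae=C\cdot\Theta\geq 0$, i.e.\ $b\geq ae\geq 0$ (using $e=\deg\mathbb{L}\geq 0$ for a Weierstra\ss{} surface). Hence $[C]=a\Theta+bf$ has non-negative coefficients, as required.

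Next I would read off the nef cone by duality. A class $D=x\Theta+yf$ is nef iff $D\cdot f\geq 0$ and $D\cdot\Theta\geq 0$, which by the intersection numbers translates to $x\geq 0$ and $y-xe\geq 0$. Rewriting
\[
D=x\Theta+yf=x(\Theta+ef)+(y-xe)f,
\]
we see that nef classes are precisely the non-negative combinations of $\Theta+ef$ and $f$. Conversely, one checks directly $(\Theta+ef)\cdot f=1$, $(\Theta+ef)\cdot\Theta=0$, $f\cdot f=0$, $f\cdot\Theta=1$, so both generators are nef.

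The only nontrivial point is the classification of integral curves in step one; everything else is routine linear algebra in a two-dimensional cone. Among the ingredients, the use of the Weierstra\ss{} hypothesis (every fiber is geometrically integral) to rule out reducible fibers is essential, since without it, individual components of reducible fibers could produce extra extremal rays in $\overline{\mathrm{NE}}(X)$.
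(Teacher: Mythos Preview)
Your argument is correct. The paper proceeds in the opposite order: it first establishes the nef cone by invoking the argument of \cite[Proposition~V.2.20]{Hartshorne} (the ruled-surface case), and then obtains $\overline{\mathrm{NE}}(X)$ by duality, pairing an effective class $C=Af+B\Theta$ against the now-known nef generators $f$ and $\Theta+ef$ to read off $A,B\geq 0$. You instead classify integral curves directly to pin down $\overline{\mathrm{NE}}(X)$ first, and then dualise to get $\mathrm{Nef}(X)$. Your route is self-contained and makes the role of the Weierstra\ss{} hypothesis (integral fibers, hence no extra components to worry about) completely explicit, at the cost of a short case analysis and the mild input $e\geq 0$; the paper's route is terser but leans on an external reference for the nef-cone step. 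Both are elementary and essentially dual to one another.
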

\begin{proof}
The proof for the nef cone is exactly the same as \cite[Proposition V.2.20]{Hartshorne}.  On the other hand, $\overline{\mathrm{NE}}(X)$ is the dual cone of the nef cone $\mathrm{Nef}(X)$. Let $C=Af+B\Theta$ be an effective curve on $X$, then $B=f\cdot C\geq 0$ and $A=(ef+\Theta)\cdot C\geq 0$.
\end{proof}

\begin{prop}\label{constantpicard2} Suppose that $X$ is a Weierstra\ss\ surface of Picard rank 2, and let $\omega=\Theta+mf$ be an ample class. Then every constant
$$
C\geq \frac{e}{(m-e)^2}
$$
satisfies the conditions of Lemma \ref{constantforbogomolov}.
\end{prop}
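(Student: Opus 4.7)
The plan is to reduce the problem to an explicit quadratic inequality in two non-negative real variables, using the description of the effective cone from Lemma \ref{lem:nefmoricone}.

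First, I would write an arbitrary effective divisor as $D = \alpha f + \beta \Theta$ with $\alpha, \beta \geq 0$, which is legitimate by Lemma \ref{lem:nefmoricone}. Using the intersection numbers $f^2 = 0$, $f \cdot \Theta = 1$, and $\Theta^2 = -e$, I would then compute
$$
D^2 = 2\alpha\beta - e\beta^2, \qquad D \cdot \omega = (\alpha f + \beta\Theta)\cdot(\Theta + mf) = \alpha + (m-e)\beta.
$$
Note that $\omega = \Theta + mf$ being ample forces $m - e = \omega \cdot \Theta > 0$ by Kleiman's criterion applied to the extremal ray generated by $\Theta$.

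Next I would substitute these formulas into the target expression and expand:
$$
C(D \cdot \omega)^2 + D^2 = C\alpha^2 + 2\bigl(C(m-e) + 1\bigr)\alpha\beta + \bigl(C(m-e)^2 - e\bigr)\beta^2.
$$
The coefficient of $\alpha^2$ is $C \geq 0$, and the coefficient of $\alpha\beta$ is strictly positive since $m > e$. Hence the entire expression is a non-negative linear combination of $\alpha^2$ and $\alpha\beta$ plus $\bigl(C(m-e)^2 - e\bigr)\beta^2$. Since $\alpha, \beta \geq 0$, non-negativity of the full expression reduces to the non-negativity of the $\beta^2$-coefficient, i.e., $C(m-e)^2 \geq e$, which is exactly the hypothesis $C \geq e/(m-e)^2$.

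There is no real obstacle here; the mild subtlety is the conceptual point that although the quadratic form in $(\alpha,\beta)$ is not positive semidefinite on all of $\mathbb{R}^2$ (its discriminant $(C(m-e)+1)^2 - C(C(m-e)^2-e) = C(2m-e) + 1$ is strictly positive), we only require non-negativity on the first quadrant. This is guaranteed because all three coefficients in the expansion are non-negative once $C \geq e/(m-e)^2$, which is what one needs.
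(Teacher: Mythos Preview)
Your proof is correct and begins exactly as the paper does: write $D=\alpha f+\beta\Theta$ with $\alpha,\beta\geq 0$ via Lemma~\ref{lem:nefmoricone} and compute $D^2$ and $D\cdot\omega$. The paper then normalises to $\beta=1$ and bounds the ratio $-D^2/(D\cdot\omega)^2=(e-2\alpha)/(\alpha+m-e)^2$ over $0\leq\alpha\leq e/2$, observing that the maximum $e/(m-e)^2$ is attained at $\alpha=0$; your direct expansion into a sum of three monomials in $\alpha,\beta$ with non-negative coefficients is a bit cleaner and sidesteps both the case split on the sign of $D^2$ and the one-variable optimisation, though the paper's route makes the sharpness of the bound (at $D=\Theta$) more transparent.
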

\begin{proof}
First, note that $D=Af+B\Theta$ is effective if and only if $A\geq 0$ and $B\geq 0$. Clearly, it is enough to bound
$$
\frac{-D^2}{(D\cdot \omega)^2}
$$
when $D^2\leq 0$. Now, $D^2=B(2A-eB)\leq 0$ if and only if $0\leq A\leq \frac{e}{2}B$. Since the same bound will work if we replace $D$ by a multiple of itself then we can assume $B=1$ and allow $A$ to be a rational number. Thus,
$$
\frac{-D^2}{(D\cdot \omega)^2}=\frac{e-2A}{(A+m-e)^2}\leq \frac{e}{(m-e)^2}.
$$
\end{proof}
\begin{rem}\label{constantforboundedness}
Assume that $\omega_0=u_0(\Theta+mf)+v_0f$ is ample and that $C_{\omega_0}$ satisfies the condition of Lemma \ref{constantforbogomolov} for $\omega_0$. Then given $r>0$, the constant  $r^{-2}C_{\omega_0}$ satisfies the condition of Lemma \ref{constantforbogomolov} for $r\omega_0$. Now, since
$$
\frac{e}{u_0^2(m-e)^2}\geq \frac{e}{u_0^2\left(m-e+\frac{v_0}{u_0}\right)^2}
$$
then Proposition \ref{constantpicard2} implies that we can choose
$$
C_{\omega_0}=\frac{e}{u_0^2(m-e)^2}.
$$
\end{rem}

Now, let $E$ be a $Z^{l}$-semistable sheaf in $\mathcal{T}^{l}$ with $\ch_1(E)=\lambda f$ for some $\lambda>0$ and assume $\ch_0(E)>0$, $\ch_2(E)\leq 0$.  Suppose that there is a destabilizing sequence
$$
0\rightarrow A\rightarrow E\rightarrow B\rightarrow 0
$$
in $\mathcal{B}_{\omega_0}$ for some $\omega_0=u_0(\Theta+mf)+v_0f$ along the curve \eqref{eq:vhyperbolaequation} with $0<u_0\ll 1$. Thus, $A\in \mathcal{T}_{\omega_0}$ and so
\begin{equation}\label{categoryequation}
0<\ch_1(A)\cdot \omega_0<\ch_1(E)\cdot \omega_0.
\end{equation}
Along the curve \eqref{eq:vhyperbolaequation} the volume $\omega^2$ equals to a constant $2K$, where $K=\alpha+m-e$. Then the wall equation translates into
\begin{equation}\label{wallequation}
\frac{\ch_1(A)\cdot \omega_0}{\ch_1(E)\cdot \omega_0}=\frac{\ch_2(A)-\ch_0(A)K}{\ch_2(E)-\ch_0(E)K},
\end{equation}
and \eqref{categoryequation} becomes
\begin{equation}\label{categoryequation2}
\ch_2(E)-\ch_0(E)K<\ch_2(A)-\ch_0(A)K<0,
\end{equation}
since $\ch_2(A)-\ch_0(A)K$ and $\ch_2(E)-\ch_0(E)K$ have the same sign and so are negative because of our assumptions on $\ch(E)$.

If $\ch_0(A)=0$ then inequality \eqref{categoryequation2} gives us finitely many values for $\ch_2(A)$. Otherwise, using inequality \eqref{categoryequation} and Theorem \ref{bogomolov} we obtain
\begin{equation}\label{quadraticform2}
\lambda^2u_0^2-4K\ch_0(A)\ch_2(A)> \bar{\Delta}^0_{\omega_0}(A)\geq 0.
\end{equation}
Taking $u_0$ small enough so that $u_0^2<4K$, inequality \eqref{quadraticform2} produces
\begin{equation}\label{boundingch2}
\ch_2(A)<\frac{\lambda^2u_0^2}{4K\ch_0(A)}\leq \lambda^2
\end{equation}
since $A$ is also a sheaf. Combining inequalities \eqref{categoryequation2} and \eqref{boundingch2} we obtain
\begin{equation}\label{boundingch1andch2}
\ch_2(E)-\ch_0(E)K+\ch_0(A)K<\ch_2(A)<\lambda^2
\end{equation}
and therefore $\ch_0(A)$, $\ch_2(A)$, and consequently $\ch_0(B)$ and $\ch_2(B)$ can take only finitely many values.

For convenience of notation, let $\displaystyle S=\frac{\ch_2(A)-\ch_0(A)K}{\ch_2(E)-\ch_0(E)K}$. The wall equation becomes
$$
(\ch_1(A)-S\lambda f)\cdot \omega_0=0,
$$
and therefore the Hodge Index Theorem gives
\begin{equation}\label{boundingch1}
\ch_1(A)^2\leq 2S\lambda\ch_1(A)\cdot f.
\end{equation}
On the other hand, Theorem \ref{bogomolov} and Remark \ref{constantforboundedness} give
\begin{equation}\label{discriminantwithconstant}
-\frac{e}{u_0^2(m-e)^2}S^2\lambda^2u_0^2\leq \Delta(A).
\end{equation}
Combining inequalities \eqref{boundingch1} and \eqref{discriminantwithconstant} we obtain
\begin{equation}\label{finitech1}
-\frac{e}{(m-e)^2}S^2\lambda^2+2\ch_0(A)\ch_2(A)\leq \ch_1(A)^2\leq 2S\lambda\ch_1(A)\cdot f.
\end{equation}
Now, if $\ch_1(A)=\eta f+\gamma \Theta$ then $\ch_1(A)\cdot f=\gamma$ and $\ch_1(A)^2=(2\eta-e\gamma)\gamma$. We will now proceed to analyze inequality \eqref{finitech1} in the following cases:

\textbf{Case 1:} $\gamma <1$. In this case, inequality \eqref{finitech1} produces
$$
-\frac{e}{(m-e)^2}S^2\lambda^2+2\ch_0(A)\ch_2(A)\leq \ch_1(A)^2< 2S\lambda.
$$
Thus, for every pair of values for $\ch_0(A)$ and $\ch_2(A)$ there are finitely many possibilities for $\ch_1(A)^2$. Therefore, since $\ch_1(A)^2=(2\eta-e\gamma)\gamma$ with $\eta$ and $\gamma$ integers then $\ch_1(A)$ can only take finitely many values whenever $\ch_1(A)^2\neq 0$.

When $\ch_1(A)^2=0$ then either $\gamma=0$ and inequality \eqref{categoryequation} implies $0<\eta<\lambda$, or $\eta=\gamma e/2$ and \eqref{categoryequation} implies $0<\gamma K<\lambda u_0^2<16\lambda K^2$. In any case, $\ch_1(A)$ can take only finitely many values.

\textbf{Case 2:} $\gamma\geq 1$. In this case $\ch_1(B)\cdot f<0$. Let $\displaystyle S'=\frac{\ch_2(B)-\ch_0(B)K}{\ch_2(E)-\ch_0(E)K}$, then applying  inequalities \eqref{boundingch1} y \eqref{discriminantwithconstant} to the Bridgeland semistable object $B$ we obtain
\begin{equation}\label{finitech1B}
-\frac{e}{(m-e)^2}S'^2\lambda^2+2\ch_0(B)\ch_2(B)\leq \ch_1(B)^2\leq 0.
\end{equation}
As in Case 1, this implies that $\ch_1(B)$ can take only finitely many values and so does $\ch_1(A)$.

This shows that the Chern character $\ch(A)$ can take only finitely many values and so there are only finitely many walls for the Chern character $\ch(E)=(\ch_0(E),\lambda f,\ch_2(E))$ for $u_0^2<4K$ along the curve \eqref{eq:vhyperbolaequation}, i.e., walls for this Chern character are bounded along the curve \eqref{eq:vhyperbolaequation} for $v\gg 0$.

\begin{cor}\label{cor:main1}
Suppose that $X$ has Picard rank 2. Let $\mathcal{E}$ be a 1-dimensional twisted Gieseker semistable sheaf with $\chi_L(\mathcal{E})\geq 0$ and $\ch_1(\mathcal{E})\cdot f>0$, and assume that $\alpha+m\gg 0$. Then $\Phi(\mathcal{E})$ is $Z_{\omega}$-semistable for $v\gg 0$ along the curve \eqref{eq:vhyperbolaequation}.
\end{cor}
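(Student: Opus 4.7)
The plan is to combine the $Z^l$-semistability of $\Phi(\mathcal{E})$ established in Theorem \ref{assymp_one_dim} with the wall-boundedness analysis immediately preceding this corollary. First I would verify that $\Phi(\mathcal{E})$ has the Chern character required by that analysis. Writing $\ch_1(\mathcal{E}) = af + b\Theta$ with $a\geq 0$ and $b = \ch_1(\mathcal{E})\cdot f > 0$ (allowed by Lemma \ref{lem:nefmoricone} and the hypotheses), together with $\ch_0(\mathcal{E})=0$, a direct application of \eqref{eq:cohomFMT-Phi} gives $\ch_0(\Phi\mathcal{E}) = b > 0$, $\ch_1(\Phi\mathcal{E}) = \chi_L(\mathcal{E})\, f$ and $\ch_2(\Phi\mathcal{E}) = -a \leq 0$, matching the form $(\ch_0,\lambda f,\ch_2)$ with $\lambda = \chi_L(\mathcal{E})$ used in the preceding analysis.

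Second, the case $\chi_L(\mathcal{E}) = 0$ is immediate: then $\ch_1(\Phi\mathcal{E}) = 0$, so $Z_\omega(\Phi\mathcal{E}) = a + \tfrac{\omega^2}{2} b > 0$ is a positive real number, giving phase $1$, which is maximal in $\Bc_\omega$. Combining Proposition \ref{E is in Tau ell} with Lemma \ref{lem:TclFcldefinitions} shows $\Phi(\mathcal{E}) \in \Bc_\omega$ for $v \gg 0$, and no $\Bc_\omega$-subobject can have strictly larger phase, so $\Phi(\mathcal{E})$ is trivially $Z_\omega$-semistable for $v \gg 0$.

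Third, for $\lambda = \chi_L(\mathcal{E}) > 0$ the preceding wall analysis applies directly to $E = \Phi(\mathcal{E})$ and produces only finitely many possible Chern characters for a $\Bc_\omega$-destabilizing subobject along the volume section, hence finitely many numerical walls. Thus there exists $v^{\ast}>0$ such that no walls occur in the chamber $v > v^{\ast}$. Suppose for contradiction $\Phi(\mathcal{E})$ is $Z_\omega$-unstable at some $v_0 > v^{\ast}$. Since the Harder-Narasimhan filtration is rigid across a wall-free chamber, there is a short exact sequence $0 \to A \to \Phi(\mathcal{E}) \to B \to 0$ in $\Bc_\omega$ with $\phi(A) > \phi(\Phi(\mathcal{E}))$ holding for every $v > v^{\ast}$. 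The persistence of $A$ and $B$ in $\Bc_\omega$ across this whole chamber, together with the equivalent characterizations of $\Tc^l, \Fc^l$ in Lemma \ref{lem:TclFcldefinitions}, forces $A, B \in \Bc^l$, so the corresponding triangle is a short exact sequence in $\Bc^l$. Consequently $\phi(A) \succ \phi(\Phi(\mathcal{E}))$ in the germ sense of \ref{para:limit_Bridgeland_stability}, contradicting the $Z^l$-semistability of $\Phi(\mathcal{E})$.

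The crux of the argument is the passage from a $Z_\omega$-destabilization in a wall-free chamber to a $Z^l$-destabilization in the limit heart. Two points deserve care: rigidity of the HN filtration across the wall-free chamber ensures the \emph{same} destabilizing subobject works for all $v > v^{\ast}$, and the equivalent characterizations in Lemma \ref{lem:TclFcldefinitions} guarantee that a subobject persistently living in $\Bc_\omega$ actually lies in $\Bc^l$, so that the destabilization transfers to the limit notion of stability.
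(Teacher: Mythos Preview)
Your overall strategy coincides with the paper's: invoke Theorem \ref{assymp_one_dim} for $Z^l$-semistability of $\Phi(\mathcal{E})$, then use the boundedness of walls along \eqref{eq:vhyperbolaequation} established just before the corollary to deduce $Z_\omega$-semistability for $v\gg 0$. Your verification that $\ch(\Phi\mathcal{E})=(b,\chi_L(\mathcal{E})f,-a)$ has the shape required by the wall analysis, and your elaboration in step~3 of how a persistent destabilizer in a wall-free region of the curve yields a $\Bl$-destabilizer, are both more explicit than the paper's one-line proof and are correct for $\chi_L(\mathcal{E})>0$.

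Step~2, however, contains a genuine error. You correctly compute $Z_\omega(\Phi\mathcal{E})=a+\tfrac{\omega^2}{2}b$ when $\chi_L(\mathcal{E})=0$, but a \emph{positive} real number does not have phase~$1$: in the convention $\mathbb{H}=\{re^{i\pi\phi}:r>0,\ \phi\in(0,1]\}$ of \ref{para:Bridgestabonsurfaces}, phase~$1$ is the negative real axis, while positive reals lie outside $\mathbb{H}$ entirely. Concretely, $\ch_1(\Phi\mathcal{E})=0$ forces $\mu_\omega(\Phi\mathcal{E})=\mu_{\Theta+mf}(\Phi\mathcal{E})=0$, so condition (2)(c) of Lemma \ref{lem:TclFcldefinitions} fails for the identity quotient and $\Phi(\mathcal{E})\notin\Tc_\omega$; being a sheaf, $\Phi(\mathcal{E})$ is then not an object of $\Bc_\omega$ at all, and the assertion of $Z_\omega$-semistability is vacuous. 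This boundary case is in fact also delicate in the paper's own statements (the wall analysis explicitly assumes $\lambda>0$, and the introduction paraphrases the corollary with \emph{positive} twisted Euler characteristic), so the honest fix is to restrict to $\chi_L(\mathcal{E})>0$, where your step~3 already gives a complete argument.
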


\begin{proof}
By Theorem \ref{assymp_one_dim} we know that $\Phi(\mathcal{E})$ is $Z^l$-semistable. However, since the walls along the curve \eqref{eq:vhyperbolaequation} are bounded for $v\gg 0$ then there exists $v_0$ such that $Z_{\omega}$ semistability coincides with $Z^l$ semistability for all $v>v_0$.  
\end{proof}

%

\section{Asymptotics for Bridgeland Walls on Weierstra{\ss} surfaces}\label{sec:BrivslimitBri}

The boundedness results for Bridgeland mini-walls obtained in Section 6 highly depend on our choice of Chern character $\ch=(\ch_0,\lambda f,\ch_2)$. Indeed, the same techniques will fail if we have $\ch_1=a\Theta$, since $\ch_1\cdot\omega$ will grow as $v\rightarrow\infty$ along the curve \eqref{eq:vhyperbolaequation}. In this section, we want to carefully study the asymptotic behavior of the Bridgeland mini-walls instead of studying all walls at once. Results on boundedness of mini-walls similar to those in Section \ref{sec:trans1dimsh} and \cite{LQ} will then yield, that Bridgeland stability in the outer-most mini-chamber on \eqref{eq:vhyperbolaequation} implies $Z^l$-stability. Combined with Theorem \ref{thm:Lo14Thm5-analogue}(B), this would produce examples of Bridgeland semistable objects whose (inverse) Fourier-Mukai transforms are slope semistable sheaves. In Section \ref{sec:example}, we will give an example where this program is realised.



\paragraph[Polarisation on Weierstra{\ss} surfaces]
Let \label{para:polarisation on Weierstrass surface} $p:X \to B$ be a Weierstra{\ss} surface with a section $\Theta$.  We do not assume that the section $\Theta$ is unique. Recall that $e=-\Theta^2$. Let us fix a positive number $m$ as in Section \ref{sec:limitBridgelandconstr}. In particular $\Theta+mf$ is ample and $f$ is nef. We introduce the following interpolation parameter $\lambda$ for $0<\lambda<1$ and define
\begin{equation}\label{def:H_lambda}
    H_\lambda :=\lambda(\Theta+mf)+(1-\lambda)f.
\end{equation}
So for any polarisation in the affine cone spanned by $\Theta+mf$ and $f$, its direction is uniquely represented by $H_\lambda$. Moreover, we define
\begin{equation}\label{def:H_lambda_perp}
    H^\perp_\lambda:=-\lambda(\Theta+mf)+\left(1+(2m-e-1)\lambda\right)f.
\end{equation}
The choice of coefficients in $H^\perp_\lambda$ makes that
\begin{equation}\label{eqn:ellipticframe}
   H_\lambda.H^\perp_\lambda=0, \qquad g:=H_\lambda.H_\lambda=-H^\perp_\lambda.H^\perp_\lambda. 
\end{equation}
By computation,
\begin{equation} \label{eq:g_in_H_lambda_coordinate}
  g=2\lambda\left(1+(m-\frac{e}{2}-1)\lambda\right) \approx 2\lambda \text{ as } \lambda\rightarrow 0^+.  
\end{equation}

\paragraph[Frame, $(\lambda, w, s, q)$-space in $\Stab (X)$]
We \label{para:frame and 4-dim-space in Stab} refer to Appendix~\ref{sec:Bridgeland wall-chamber structures} for general results and notation on Bridgeland wall-chamber structures, including the definition of a \emph{frame}.  We will use the notation from Appendix~\ref{sec:Bridgeland wall-chamber structures} throughout this section.

For two fixed real numbers $w$ and $\lambda$ with $0<\lambda<1$, we have the frame
\begin{equation} \label{eq-frame-elliptic}
    (H_\lambda, H_\lambda^\perp,w).
\end{equation}

Then for any real numbers $s,q$ satisfying $q>\tfrac{1}{2}s^2$ we can define a Bridgeland stability condition $\sigma_{s,q}$ as in \eqref{eq-sigma-s-q}. Since $\sigma_{s,q}$ depends on the choice of a fixed frame, so it still depends on $\lambda, w$ even though that is suppressed in the notation, see footnote~\ref{footnote: symbol of sigma-s-q}.  As a result, we have the subset of $\Stab (X)$
\begin{equation}\label{eq:Bristab4dimsubsp}
\{ \sigma_{s,q}\in \Stab (X) : (\lambda, w, s, q) \in \mathbb{R}^4,       0<\lambda<1, q>\tfrac{s^2}{2}\}
\end{equation}
which we refer to as the ``$(\lambda, w, s, q)$-space'' in $\Stab (X)$.

\paragraph[Change of variables and the $(\lambda,0,0,q)$-plane in $\Stab (X)$]
Recall \label{para:lambda-q-plane} that we have parameters $u,v \in \mathbb{R}_{>0}$ related by  \eqref{eq:vhyperbolaequation} in the definition of $Z^l$-stability.  We can make the change of variables
\begin{equation}\label{eq-coordinate-change}
  \begin{cases}
  \lambda=\frac{u}{u+v} \\
  t=u+v
  \end{cases}, \text{ or equivalently }
  \begin{cases}
  u=t\lambda \\
  v=t(1-\lambda)
  \end{cases}
\end{equation}
which allows us to write $\omega$ as
\begin{equation} \label{eq:polarisation in two coordinates}
   \omega=u(\Theta+mf)+vf=tH_\lambda. 
\end{equation}

At this point, together with the notation from \ref{para:polarisation on Weierstrass surface}, the parameters $\lambda, t$ correspond to our polarisation $\omega$, while $s,w$ correspond to the $B$-field $B$ (see \eqref{eq-coordinate}). In particular, $\lambda$ parametrises the direction of polarisation $\omega$ in the affine cone spanned by $\Theta+mf$ and $f$.

If we set $B=0$, i.e.\ $s=w=0$, then this forces $q=\tfrac{1}{2}t^2$ in \eqref{eq-q} and  restricts the $(\lambda, w, s, q)$-space in \eqref{eq:Bristab4dimsubsp}  to a ``$(\lambda,0,0,q)$-plane'' (still with the restriction that $0<\lambda<1$ and $q>0$) in $\Stab (X)$.

The volume section \eqref{eq:vhyperbolaequation}, written in terms of $u,v$ as in Figure~\ref{fig:vhyperbolaequation}, can now be written in terms of $\lambda, q$ in the $(\lambda,0,0,q)$-plane as
\begin{equation}\label{eq:curve-in-lambda-q-plane} 
  2q\left(\lambda+(m-\tfrac{e}{2}-1)\lambda^2\right)=\alpha+m-e.
\end{equation}
And we still refer to \eqref{eq:curve-in-lambda-q-plane} as a \emph{volume section}.

\paragraph[Moving frame]
We \label{para:movingframe} still set $B=0$. So a frame $(H_\lambda, H_\lambda^\perp,0)$ is fixed by a choice of real number $\lambda$ with $0<\lambda<1$. 

We will move the frame $(H_\lambda, H_\lambda^\perp,0)$ by varying the parameter $\lambda$. In particular, we let $\lambda \rightarrow 0^+$ along 
\eqref{eq:curve-in-lambda-q-plane}. Then
\[
v\rightarrow +\infty \text{ along \eqref{eq:vhyperbolaequation}} \Longleftrightarrow \lambda\rightarrow 0^+ \text{ along \eqref{eq:curve-in-lambda-q-plane}}
\]
and the volume section \eqref{eq:curve-in-lambda-q-plane} is asymptotic to
\begin{equation}\label{eq:asymptotic-curve-in-lambda-q-plane}
q=\frac{1}{2\lambda}\left(\alpha+m-e\right) \text{ as } \lambda\rightarrow 0^+.
\end{equation}
\begin{figure}
\centering
\begin{tikzpicture}[thick]
        \begin{axis}[
            xmin=0, xmax=1, 
            ymin=0, ymax=40,
            axis x line = center, 
            axis y line = center,
            xtick = \empty,
            ytick = \empty,
            xlabel = {$\lambda<1$},
            ylabel = {$q$},
            xlabel style={below right},
            ylabel style={above left},
            legend style = {nodes=right},
            legend pos = north east,
            clip mode = individual,
            axis line style=black!50,
            ]
            
            \node[below left] at (0,0) { $0$};
            
            \tikzmath{
            \m=3;
            \a=2; 
            \e=2;
            \b=\m+ \a-\e;
            \c=-0.5*\e+\m;
            } 

         \addplot[black!30, samples=280, domain=0.05:0.7] {\b/(2*x)};
         \addplot[black!30, dotted, samples=100, domain=0.01:0.05] {\b/(2*x)};
         \addplot[black, samples=280, domain=0.05:0.7] {\b/((2*\c-2)*x*x+2*x)};
         \addplot[black, dotted, samples=100, domain=0.01:0.05] {\b/((2*\c-2)*x*x+2*x)};
        \end{axis}
\end{tikzpicture}
\caption{The volume section \eqref{eq:curve-in-lambda-q-plane} (black) in the $(\lambda,0,0,q)$-plane (with $0<\lambda<1$ and $q>0$) in $\Stab (X)$ and its asymptotic curve \eqref{eq:asymptotic-curve-in-lambda-q-plane} (gray) as $\lambda\rightarrow 0^+$.}
\label{fig:q-lambda}
\end{figure}
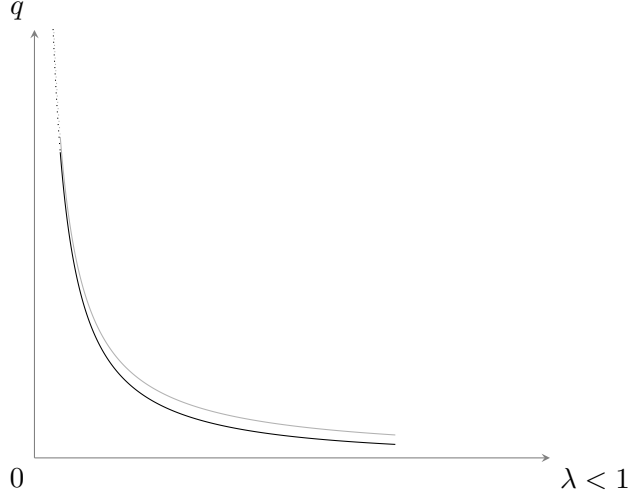

\paragraph[Intersection numbers] Since \label{para:intersectionnumbers}  $\Theta$, $\Theta_i$ are sections, we have the intersection numbers $\Theta_i.f=1$, $\Theta.f=1$. Recall in \ref{para:geometry of X} we have  $\Theta_i^2=\Theta^2=-e$ and $K_X \equiv (2g(B)-2+e)f$.
Let us denote $\theta_i=\Theta.\Theta_i$. Since both $\Theta$ and $\Theta_i$ are irreducible curves, we have $\theta_i\geq 0$. Decomposing $\Theta$, $f$ and $\Theta_i$ with respect to the frame $(H_\lambda, H_\lambda^{\perp}, w)$, we have
$$
\Theta=l_\Theta H_\lambda + l_\Theta^{\perp} H_\lambda^{\perp}, \quad
f=l_f H_\lambda + l_f^{\perp} H_\lambda^{\perp}, \quad
\Theta_i= a_i H_\lambda + b_i H_\lambda^{\perp} + \Delta_i,
$$
where the real coefficients $l_\Theta$, $l_\Theta^{\perp}$, $l_f$, $l_f^{\perp}$, $a_i$, $b_i$ and the class $\Delta_i\in \{ H_\lambda, H_\lambda^{\perp}\}^{\perp}$ are given as follows with $g$ in \eqref{eq:g_in_H_lambda_coordinate}:
\begin{align*}
    l_\Theta g &= \Theta H_\lambda = 1+ (m-e-1)\lambda, &
-l_\Theta^\perp g &= \Theta H_\lambda^\perp = 1+(m-1)\lambda,
\\
l_f g &= f H_\lambda = \lambda, & -l_f^\perp g &= f H_\lambda^\perp = - \lambda,
\\
a_i g &= \Theta_i H_\lambda = 1+(m+\theta_i-1)\lambda, & -b_i g &=  \Theta_i H_\lambda^\perp = 1+(m-\theta_i-e-1)\lambda.
\end{align*}
It is clear that
\begin{align*}
      l_\Theta+l_\Theta^\perp &=-\frac{e\lambda}{g}, & l_f+l_f^\perp &= \frac{2\lambda}{g}, & a_i+b_i &=\frac{2\lambda}{g}(\theta_i+\frac{e}{2}),\\
      l_\Theta-l_\Theta^\perp &=\frac{1}{\lambda}, & l_f-l_f^\perp &= 0, & a_i-b_i &=\frac{1}{\lambda}.
\end{align*}
Basic computation shows that 
\begin{equation} \label{eqn:sumation_indep_lambda}
   a_i H_\lambda + b_i H_\lambda^\perp=\Theta+(\theta_i+e)f.
\end{equation}
Therefore
\begin{equation}\label{eqn:Delta_i}
\Delta_i = \Theta_i -\Theta -(\theta_i+e)f.
\end{equation}
In particular, the divisor class $\Delta_i$ is independent of $\lambda$.

Note that for any numerical invariant $\ch=(\ch_0,\ch_1,\ch_2)$ with $\ch_0\neq 0$, we can write $\ch=e^{L}\left(\ch_0,0,\ch_2-\frac{\ch_1^2}{2\ch_0}\right)$ with $L=\frac{\ch_1}{\ch_0}$. Moreover, $\ch$ is of Bogomolov type as \eqref{eq:Bogomolovtype} if and only if $(\ch_0,0,\ch_2-\frac{\ch_1^2}{2\ch_0})$ is so. For numerical invariants $\ch$ of Bogomolov type, the following proposition gives us the asymptotic behavior of potential walls in the $(\lambda,0,0,q)$-plane as $\lambda\to 0^+$.

\begin{prop}[Potential walls in $(\lambda,0,0,q)$-plane for two-dimensional objects]\label{prop:Potentialwallsinlambda-qplane}
Let $p:X \to B$ be a Weierstra{\ss} surface with a section $\Theta$ and possible other sections $\Theta_i$. Let
$$\ch=\left(\ch_0,\ch_1,\ch_2\right)=(x,0,z)$$
with $\ch_0=x\neq 0$ and $xz\leq 0$ (i.e. $\ch$ is of Bogomolov type as \eqref{eq:Bogomolovtype}).
Write
$$
L=a_L\Theta + b_L f + \sum_i \eta_i \Theta_i
$$
for some $a_L, b_L, \eta_i \in \mathbb{R}$. 
Take the frame $(H_\lambda, H_\lambda^\perp,0)$ as \eqref{eq-frame-elliptic} with $w=0$.
Consider the $(\lambda,0,0,q)$-plane in $\Stab (X)$ in \ref{para:lambda-q-plane}.
Then the potential wall $W(e^{L}\ch, e^{L}\ch')$ with
$$\ch'=\left(\ch_0',\ch_1',\ch_2'\right)=(r, k\Theta+pf+\sum_i \xi_i\Theta_i,\chi),$$
has the following asymptotic behavior in the $(\lambda,0,0,q)$-plane as $\lambda\to 0^+$. Write $e=-\Theta^2$, $\Delta_i$ as \eqref{eqn:Delta_i}, $\Delta_L=\sum_i \eta_i \Delta_i$ and $\Delta'=\sum_i \xi_i \Delta_i$.
\begin{itemize}
\item[(A)] Suppose $k+\sum_i\xi_i=0$ and $p-ek +\sum_i \xi_i\theta_i=0$.
  \begin{itemize}
  \item[(A1)] If $a_L+\sum_i\eta_i=0$ and $b_L-ea_L +\sum_i \eta_i\theta_i=0$, then the potential wall in the $(\lambda,0,0,q)$-plane is the entire region given by $q>0$.
  \item[(A2)] If $a_L+\sum_i\eta_i\neq 0$ or $b_L-ea_L +\sum_i \eta_i\theta_i\neq 0$, then there are no potential walls in the $(\lambda,0,0,q)$-plane.
  \end{itemize}

\item[(B)] Suppose $k+\sum_i\xi_i=0$ and $p-ek +\sum_i \xi_i\theta_i\neq 0$.  Let us set
 \begin{align}
  A &:=-\left(\frac{x\chi-rz}{x}+\Delta'\Delta_L+\Big(a_L+\sum_i\eta_i\Big)\Big(p-\frac{e}{2}k+\sum_i\xi_i(\theta_i+\frac{e}{2})\Big)\right)\frac{a_L+\sum_i\eta_i}{p-ek+\sum_i\xi_i \theta_i}, \label{eq:A-dim-2} \\
  B &:=\frac{z}{x}+\frac{\Delta_L^2}{2}-\left(b_L-ea_L+\frac{e}{2}\Big(a_L+\sum_i\eta_i\Big)+\sum_i\eta_i\theta_i\right)\frac{\frac{x\chi-rz}{x}+\Delta'\Delta_L}{p-ek+\sum_i\xi_i \theta_i}. \label{eq:B-dim-2}
  \end{align}
 \begin{itemize}
  \item[(B1)] If $A\neq 0$
   then the potential wall is asymptotic to
  \begin{equation}\label{eq:wall-asymptotic-type-B1}
  q=\frac{A}{2\lambda^2}, \text{ as } \lambda\to 0^+.
  \end{equation}
  \item[(B2)] If $A = 0$ and $B \neq 0$ then  the potential wall is asymptotic to
  \begin{equation}\label{eq:wall-asymptotic-type-A2}
  q=\frac{B}{2\lambda}, \text{ as } \lambda\to 0^+.
  \end{equation}
  \item[(B3)] If $A=0$ and $B= 0$, then the potential wall is bounded as $\lambda\to 0^+$.
  \end{itemize}
\item[(C)] Suppose $k+\sum_i\xi_i\neq 0$.
  \begin{itemize}
  \item[(C1)] If
  \begin{multline}\label{eq:D-dim-2}
  D:=\\
  \frac{z}{x}+\frac{\Delta_L^2}{2}-\left(\frac{x\chi-rz}{x}+\Delta'\Delta_L+\Big(a_L+\sum_i\eta_i\Big)\Big(p-\frac{e}{2}k+\sum_i\xi_i(\theta_i+\frac{e}{2})\Big)\right)\frac{a_L+\sum_i\eta_i}{k+\sum_i\xi_i}\neq 0,
  \end{multline}
   then the potential wall is asymptotic to
  \begin{equation}\label{eq:wall-asymptotic-type-C1}
  q=\frac{D}{2\lambda}, \text{ as } \lambda\to 0^+.
  \end{equation}
  \item[(C2)] If $D= 0$, then the potential wall is bounded as $\lambda\to 0^+$.
  \end{itemize}
\end{itemize}
\end{prop}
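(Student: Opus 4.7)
\textbf{Proof plan for Proposition \ref{prop:Potentialwallsinlambda-qplane}.}

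The plan is a direct computation of the wall equation, followed by careful asymptotic analysis as $\lambda\to 0^+$. Since the restriction to the $(\lambda,0,0,q)$-plane corresponds to $s=w=0$ (i.e.\ $B$-field $=0$), the relation $\omega^2/2=gq$ (with $g=H_\lambda^2$, $q=t^2/2$) gives the central charge
\[
Z(F)=-\ch_2(F)+gq\,\ch_0(F)+it\,H_\lambda\,\ch_1(F).
\]
First I would write the potential wall $W(e^L\ch,e^L\ch')$ as the locus where $\Im Z(e^L\ch)\,\Re Z(e^L\ch')=\Re Z(e^L\ch)\,\Im Z(e^L\ch')$. Using $e^L\ch=(x,\,xL,\,z+\tfrac{xL^2}{2})$ and the analogous expression for $e^L\ch'$, after cancellation of the $qxrH_\lambda L$ terms the equation reduces (assuming $x\neq 0$, $H_\lambda\ch'_1\neq 0$) to the master formula
\begin{equation}\label{eq:plan-master}
gq=\frac{z}{x}+\frac{L^2}{2}-\frac{H_\lambda L}{H_\lambda\ch'_1}\left(\frac{x\chi-rz}{x}+L\ch'_1\right).
\end{equation}

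Next I would expand each ingredient using the frame decomposition from \ref{para:intersectionnumbers}. Introducing the shorthand $\alpha_L=a_L+\sum_i\eta_i$, $\beta_L=b_L-ea_L+\sum_i\eta_i\theta_i$, $\alpha'=k+\sum_i\xi_i$, $\beta'=p-ek+\sum_i\xi_i\theta_i$, a direct substitution of $H_\lambda\Theta=1+(m-e-1)\lambda$, $H_\lambda f=\lambda$, $H_\lambda\Theta_i=1+(m+\theta_i-1)\lambda$ gives
\[
H_\lambda L=\alpha_L+\lambda\bigl((m-1)\alpha_L+\beta_L\bigr),\qquad H_\lambda\ch'_1=\alpha'+\lambda\bigl((m-1)\alpha'+\beta'\bigr),
\]
while using $\Theta_i\Theta_j=\Delta_i\Delta_j+\theta_i+\theta_j+e$ (derivable from $\Delta_i=\Theta_i-\Theta-(\theta_i+e)f$) yields the key identity
\[
L\ch'_1=\alpha_L\beta'+\alpha'(\beta_L+e\alpha_L)+\Delta_L\Delta',
\qquad
\tfrac{L^2}{2}=\tfrac{e\alpha_L^2}{2}+\alpha_L\beta_L+\tfrac{\Delta_L^2}{2}.
\]
Recall also $g=2\lambda+(2m-e-2)\lambda^2\sim 2\lambda$.

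The case analysis now follows by tracking orders of $\lambda$ in \eqref{eq:plan-master}. In case (A), the hypotheses force $H_\lambda\ch'_1\equiv 0$ identically in $\lambda$; under (A1) we also have $H_\lambda L\equiv 0$ so both $\Im Z$'s vanish and the wall equation is trivially satisfied, whereas under (A2) the remaining terms reduce to a Chern-character relation independent of $(\lambda,q)$ which generically fails, giving no wall. In case (B) we have $H_\lambda\ch'_1=\lambda\beta'$ exactly (since $\alpha'=0$), so dividing by $g\sim 2\lambda$ in \eqref{eq:plan-master} produces a potential $\lambda^{-2}$ leading term; a direct expansion identifies its coefficient with $A$ of \eqref{eq:A-dim-2}, after verifying $p-\tfrac{e}{2}k+\sum\xi_i(\theta_i+\tfrac{e}{2})=\beta'$. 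When $A\neq 0$ this is (B1); when $A=0$ the $\lambda^{-2}$ term cancels (either via $\alpha_L=0$ or via $\tfrac{x\chi-rz}{x}+L\ch'_1=0$) and the next-order $\lambda^{-1}$ coefficient is exactly $B$ from \eqref{eq:B-dim-2}, giving (B2) and (B3). In case (C) we have $H_\lambda\ch'_1\to\alpha'\neq 0$, so $\tfrac{H_\lambda L}{H_\lambda\ch'_1}\to\tfrac{\alpha_L}{\alpha'}$ and \eqref{eq:plan-master} gives $gq=D+O(\lambda)$, whence $q\sim D/(2\lambda)$ in (C1) and $q=O(1)$ in (C2).

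The main obstacle is the bookkeeping for $L\ch'_1$: one must show that the apparently different expressions appearing in \eqref{eq:A-dim-2}, \eqref{eq:B-dim-2}, \eqref{eq:D-dim-2} all agree with the ones coming out of \eqref{eq:plan-master} after eliminating $L\ch'_1$ via the identity above. In particular, the fact that $\alpha_L(p-\tfrac{e}{2}k+\sum\xi_i(\theta_i+\tfrac{e}{2}))$ equals $\alpha_L\beta'+\tfrac{e}{2}\alpha_L\alpha'$ in case (C) (and equals $\alpha_L\beta'$ in case (B)) is what makes the stated forms of $A$, $B$, $D$ coincide with the algebraic leading coefficients from the expansion; checking this carefully, together with the cancellation of the term $\alpha_L\beta_L+e\alpha_L^2/2$ between $L^2/2$ and $\alpha_L M/\alpha'$ in case (C), is the only substantive calculation.
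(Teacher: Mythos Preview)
Your approach is correct and is essentially a streamlined version of the paper's argument. Both proofs derive the same wall equation and then extract the asymptotic orders in $\lambda$ by direct expansion; the paper's equation \eqref{eq:wall-lambda-q-general} is exactly your master formula \eqref{eq:plan-master} rewritten via the identities $H_\lambda L=gl_1$, $H_\lambda\ch_1'=gc_1$, $L^2=g(l_1^2-l_2^2)+\Delta_L^2$, and $L\ch_1'=g(l_1c_1-l_2c_2)+\Delta_L\Delta'$. The difference is packaging: the paper first invokes the general frame machinery of Appendix~\ref{sec:Bridgeland wall-chamber structures} (Lemmas~\ref{lem:Bertram} and \ref{lem:shift by line bundle}) to write the wall as a line through $P(e^L\ch)$ with slope $C(e^L\ch,e^L\ch')$ in the $(s,q)$-plane, and only then sets $s=0$, whereas you compute the $s=0$ wall equation from scratch using only the basic intersection numbers from \ref{para:intersectionnumbers}. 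Your route is more elementary and self-contained; the paper's route has the advantage that it also gives the wall in the full $(s,q)$-plane (e.g.\ in case~(A) the wall is the vertical line $s=l_1$), which makes the dichotomy in (A1)/(A2) slightly cleaner than your ``Chern-character relation which generically fails'' --- the paper simply observes that $l_1\neq 0$ forces the line $s=l_1$ to miss $s=0$.

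One small point on your (A2): your reduction yields the numerical condition $x\chi+xL\ch_1'=rz$, and you say this ``generically fails''. Strictly speaking, if it \emph{holds} then the wall equation is identically satisfied on the whole plane, which is a degenerate wall (the two characters are phase-proportional everywhere) rather than a proper codimension-one wall; the paper's formulation via $s=l_1$ sidesteps this by treating the wall as a line in the larger $(s,q)$-plane. This is not a gap in your argument, just a place where you should be explicit that the degenerate case is excluded by the notion of potential wall.
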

\begin{proof}
Let us break the proof into five steps. 
We are taking $H_\lambda$ as \eqref{def:H_lambda} and taking $H=H_\lambda$ in \eqref{eq-ch-Maciocia}. By the assumption that $\ch_1=0$, we have $y_1=y_2=0$ and $\Delta =0$ in \eqref{eq-ch-Maciocia}.

\emph{Step 1.} Let us decompose the given data with respect to the frame $(H_\lambda, H_\lambda^\perp, w)$ and compute different kinds of intersection numbers. We decompose $L$ according to the frame $(H_\lambda, H_\lambda^\perp, w)$ as
$$L=a_L\Theta + b_L f + \sum_i \eta_i \Theta_i=l_1 H_\lambda+l_2 H^\perp_\lambda+\Delta_L,$$
where $l_1$ and $l_2$ are real coefficients, and the $\RR$-divisor $\Delta_L\in \{H_\lambda, H_\lambda^\perp \}^{\perp}$.
Then
$$
l_1=a_Ll_\Theta+b_L l_f +\sum_i \eta_i a_i,\quad l_2=a_Ll_\Theta^{\perp}+b_L l_f^{\perp}+\sum_i \eta_i b_i, \quad  \Delta_L=\sum_i \eta_i \Delta_i.
$$
In particular, the divisor class $\Delta_L$ is independent of $\lambda$ since $\Delta_i$ is so by \eqref{eqn:Delta_i}.
Recall that $g$ is given in \eqref{eq:g_in_H_lambda_coordinate}. We have
\begin{equation} \label{eq:l_1}
g l_1  = (a_L+ \sum_i\eta_i)+\left(b_L-ea_L +\Big(a_L+ \sum_i\eta_i\Big)(m-1)+\sum_i\eta_i\theta_i\right)\lambda,
\end{equation}
and
\begin{equation}\label{eq:sum_l_1-l_2}
l_1+l_2 = \frac{b_l-\frac{e}{2}a_L+\sum_i \eta_i(\theta_i+\frac{e}{2})}{1+(m-\frac{e}{2}-1)\lambda}, \quad
l_1-l_2 = \frac{a_L+\sum_i \eta_i}{\lambda}.
\end{equation}

Write
$$
\ch_1'=k\Theta+pf+\sum_i\xi_i\Theta_i = c_1 H_\lambda+ c_2 H_\lambda^\perp+\Delta',
$$
with real coefficients $c_1$, $c_2$ and class $\Delta' \in \{H_\lambda, H_\lambda^\perp \}^{\perp}$ as (\ref{eq-ch'-Maciocia}). Then
$$
c_1=k l_\Theta + p l_f +\sum_i \xi_i a_i, \quad
c_2=k l_\Theta^\perp + p l_f^\perp +\sum_i \xi_i b_i, \quad
\Delta'=\sum_i \xi_i \Delta_i.
$$
Hence the divisor class $\Delta'$ is also independent of $\lambda$. We obtain
\begin{equation}\label{eq:gc_1}
g c_1 = (k+\sum_i \xi_i) + \left(p-ek +\Big(k+\sum_i \xi_i\Big)(m-1)+\sum_i \xi_i\theta_i\right)\lambda,
\end{equation}
\begin{equation}\label{eq:sum_c_1-c_2}
g (c_1+c_2) = 2\lambda\Big(p-\frac{e}{2}k+\sum_i\xi_i(\theta_i+\frac{e}{2})\Big).
\end{equation}

\emph{Step 2.} \textbf{Suppose} $k+\sum_i\xi_i=0$ and $p-ek +\sum_i \xi_i\theta_i=0$. Then by \eqref{eq:gc_1}, $c_1=0$, which is independent of $\lambda$.  Now that we have  $y_1=0$ (by assumption) and $c_1=0$, we obtain $xc_1-ry_1=0$. By footnote~\ref{footnote2} in Lemma~\ref{lem:shift by line bundle}, we obtain that the potential wall in the $(\lambda,0,s,q)$-space is given by
$s=l_1$ with $q>\frac{l_1^2}{2}$.

If $a_L+\sum_i\eta_i=0$ and $b_L-ea_L +\sum_i \eta_i\theta_i=0$, then  by \eqref{eq:l_1}, $l_1=0$ and the potential wall in the $(\lambda,0,0,q)$-plane is given by $q>0$. This shows (A1).

If $b_L-ea_L +\sum_i \eta_i\theta_i\neq 0$, then by \eqref{eq:l_1}, $l_1\neq 0$ and  there is no potential wall in the $(\lambda,0,0,q)$-plane. If $a_L+\sum_i\eta_i\neq 0$, then by \eqref{eq:l_1}, $l_1\neq 0$ as $\lambda\to 0^+$ and again there is no potential wall in the $(\lambda,0,0,q)$-plane. This shows (A2).

\emph{Step 3.} We do some computation by assuming that $c_1\neq 0$.  Recall the definition of $P(\ch)$ from Lemma \ref{lem:Bertram}.  Now we have
$$
P(\ch)=(0,\frac{z}{xg}),\quad C(\ch,\ch')=\frac{x\chi-rz}{xg c_1}.
$$
Also, we have $y_1=y_2=0$ and $\Delta=0$ by assumption while $g=\delta$ from \eqref{eqn:ellipticframe}.  Thus by  Lemma~\ref{lem:shift by line bundle},
$$
P(e^{L}\ch)=\left(l_1, \frac{l_1^2-l_2^2}{2}+\frac{z}{xg}+\frac{\Delta_L^2}{2g}\right),$$
and
$$C(e^{L}\ch, e^{L}\ch')=C(\ch,\ch')+l_1-l_2\frac{c_2}{c_1}+\frac{\Delta'\Delta_L}{gc_1}.
$$
The potential wall $W(e^{L}\ch, e^{L}\ch')$ in the $(\lambda,0,s,q)$-space (i.e. $w=0$) is given by
$$
q=\left(\frac{x\chi-rz}{xgc_1}+l_1-l_2\frac{c_2}{c_1}+\frac{\Delta'\Delta_L}{gc_1}\right)(s-l_1)+ \frac{l_1^2-l_2^2}{2}+\frac{z}{xg}+\frac{\Delta_L^2}{2g}.
$$
By restricting to $s=0$, the potential wall $W(e^{L}\ch, e^{L}\ch')$ in the $(\lambda,0,0,q)$-plane is given by
\begin{equation}\label{eq:wall-lambda-q-general}
q=-\left(\frac{x\chi-rz}{x}+\Delta'\Delta_L\right)\frac{gl_1}{gc_1}\frac{1}{g}+l_1l_2\left(\frac{c_1+c_2}{c_1}\right)-\frac{1}{2}(l_1+l_2)^2+\frac{z}{xg}+\frac{\Delta_L^2}{2g}.
\end{equation}

Therefore, by \eqref{eq:l_1}, \eqref{eq:sum_l_1-l_2}, \eqref{eq:gc_1} and \eqref{eq:sum_c_1-c_2}, we have
\begin{multline*}
q=\\
-\left(\frac{x\chi-rz}{x}+\Delta'\Delta_L\right)\frac{(a_L+\sum_i\eta_i)+\Big(b_L-ea_L+(a_L+\sum_i\eta_i)(m-1)+\sum_i\eta_i\theta_i\Big)\lambda}{(k+\sum_i\xi_i)+\Big(p-ek+(k+\sum_i\xi_i)(m-1)+\sum_i\xi_i\theta_i\Big)\lambda} \cdot \frac{1}{g}\\
 +\frac{1}{4}\left(\left(\frac{b_L-\frac{e}{2}a_L+\sum_i \eta_i (\theta_i+\frac{e}{2})}{1+(m-\frac{e}{2}-1)\lambda}\right)^2-\frac{(a_L+\sum_i \eta_i)^2}{\lambda^2}\right)\\
 \quad \cdot \frac{2\lambda\Big(p-\frac{e}{2}k+\sum_i\xi_i(\theta_i+\frac{e}{2})\Big)}{(k+\sum_i\xi_i)+\Big(p-ek+(k+\sum_i\xi_i)(m-1)+\sum_i\xi_i\theta_i\Big)\lambda} \\
  -\frac{1}{2}\left(\frac{b_L-\frac{e}{2}a_L+\sum_i \eta_i (\theta_i+\frac{e}{2})}{1+(m-\frac{e}{2}-1)\lambda}\right)^2+\left(\frac{z}{x}+\frac{\Delta_L^2}{2}\right)\cdot\frac{1}{g}.
\end{multline*}

\emph{Step 4.} \textbf{Suppose} $k+\sum_i\xi_i=0$ and $p-ek +\sum_i \xi_i\theta_i\neq 0$. Then by \eqref{eq:gc_1}, $c_1\neq 0$.
We have
\begin{equation}
q=\frac{A}{2\lambda^2}+\frac{B}{2\lambda}+C(\lambda),
\end{equation}
where $A$ and $B$ are given as (\ref{eq:A-dim-2}) and (\ref{eq:B-dim-2})
and $C(\lambda)$ is bounded as $\lambda\to 0^+$.
The claims in case (B) then follow.

\emph{Step 5.}  \textbf{Suppose} $k+\sum_i\xi_i\neq 0$. Then  by \eqref{eq:gc_1}, $c_1\neq 0$ as $\lambda \to 0^+$. We have
\begin{equation}
q=\frac{D}{2\lambda}+E(\lambda),
\end{equation}
where $D$ is given as in (\ref{eq:D-dim-2})
and $E(\lambda)$ is bounded as $\lambda\to 0^+$.
The claims in case (C) then follow.
\end{proof}

We give a parallel result of Proposition~\ref{prop:Potentialwallsinlambda-qplane} on potential walls in $(\lambda,0,0,q)$-plane for one-dimensional objects in Appendix~\ref{sec:potentialwallsone-dim}.


\section{Transforms of line bundles of fiber degree at least 2}\label{sec:example}

In this section, we combine Theorem \ref{thm:Lo14Thm5-analogue} and the structural results on walls in Section \ref{sec:BrivslimitBri} to prove the following result on sheaves:

\begin{prop}\label{prop:OxaLThetatransformss}
Let $p : X \to B$ be a Weierstra{\ss} elliptic surface such that $X$ has Picard rank two and $e>0$.   Let $m>0$ such that $\Theta + mf$ is ample.  Then for any positive integer $a_L>1$ and real number $\alpha >0$ satisfying
\begin{equation}\label{eq:wallnotlimitcurve}
 \alpha+m -e \neq \tfrac{e}{2}a_L(a_L-1),
\end{equation}
the line bundle $\OO_X(a_L\Theta)$ is $\sigma$-stable for any Bridgeland stability $\sigma$ lying on the curve \eqref{eq:curve-in-lambda-q-plane} on the $(\lambda,0,0,q)$-plane with $\lambda>0$ sufficiently small.  Moreover,  the transform $\whPhi \OO_X(a_L\Theta)$ is a $\mu_\oo$-semistable locally free sheaf of rank $a_L$ where $\oo =  \Theta + (\alpha+m)f$.
\end{prop}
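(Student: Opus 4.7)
My plan is to proceed in four steps. First, I would verify that $\OO_X(a_L\Theta)$ is $\whPhi$-WIT$_0$: its restriction to each smooth fiber is a degree-$a_L$ line bundle on an elliptic curve, whose fiber-wise Fourier--Mukai transform is a stable rank-$a_L$ vector bundle sitting in degree zero. Consequently $\OO_X(a_L\Theta) \in W_{0,\whPhi} \subset \mathcal{T}^l \subset \Bl$ by \ref{para:TlFlproperties}(iii), and $\ch_0(\whPhi\OO_X(a_L\Theta)) = f\cdot a_L\Theta = a_L$ by \eqref{eq::cohomFMT-hatPhi}.

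The heart of the proof will be to establish Bridgeland $\sigma$-stability of $\OO_X(a_L\Theta)$ along the volume section \eqref{eq:curve-in-lambda-q-plane} for small $\lambda$. Since the Mori cone of $X$ is spanned by $f$ and $\Theta$ (Lemma~\ref{lem:nefmoricone}), the Arcara--Miles structural result on destabilizers of line bundles restricts any candidate destabilizing subsheaf $A$ to the form $A = I_Z(b\Theta)$ with $0 \leq b \leq a_L$ an integer and $Z$ a zero-dimensional subscheme. A direct computation of the phase equality $\phi(A) = \phi(\OO_X(a_L\Theta))$ in the $(\lambda,0,0,q)$-plane yields the wall condition $\omega^2 = ea_L b - 2a_L\ell(Z)/(a_L-b)$, and applying Proposition~\ref{prop:Potentialwallsinlambda-qplane} to $\ch(\OO_X(a_L\Theta)) = e^{a_L\Theta}(1,0,0)$ (so $x=1$, $z=0$, $L=a_L\Theta$) confirms that the outermost such wall corresponds to $b=a_L-1$, $\ell(Z)=0$, giving $\omega^2 = ea_L(a_L-1)$. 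Equating this with the volume section's $\omega^2 = 2(\alpha+m-e)$ recovers exactly the value $\alpha+m-e = \tfrac{e}{2}a_L(a_L-1)$ excluded by hypothesis \eqref{eq:wallnotlimitcurve}, so the section avoids this outermost wall, while the asymptotic analysis of Proposition~\ref{prop:Potentialwallsinlambda-qplane} handles the remaining candidate destabilizers and ensures no other wall crosses the section for $\lambda$ sufficiently small.

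By Lemma~\ref{lem:Briminiwallboundedness}, $\sigma$-stability at these points of the volume section propagates to $Z^l$-stability of $\OO_X(a_L\Theta)$ in $\Bl$. Since $\OO_X(a_L\Theta) \in W_{0,\whPhi}$, its $\langle \Fc^l[1], W_{0,\whPhi}\rangle$-component in the torsion triple \eqref{eq:Bltorsiontriple} coincides with the whole object, and $f\ch_1(\OO_X(a_L\Theta)) = a_L \neq 0$. Theorem~\ref{thm:Lo14Thm5-analogue}(B) then yields that $\whPhi\OO_X(a_L\Theta)$ is a $\mu_{\oo}$-semistable torsion-free sheaf, of rank $a_L$ by Step~1.

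Finally, I would upgrade torsion-free to locally free by a fiber-wise argument. On each smooth fiber $X_b$, the restriction of $\OO_X(a_L\Theta)$ is a degree-$a_L$ line bundle whose Fourier--Mukai transform on $X_b$ is a stable rank-$a_L$ vector bundle, so cohomology and base change give local-freeness of $\whPhi\OO_X(a_L\Theta)$ over the open locus above smooth fibers. Since any codimension-two torsion in the cokernel of $\whPhi\OO_X(a_L\Theta) \hookrightarrow (\whPhi\OO_X(a_L\Theta))^{\vee\vee}$ would, after applying $\Phi$ and using that reflexive sheaves on a smooth surface are locally free, contradict the local-freeness of $\OO_X(a_L\Theta)$ via the inverse transform relation $\Phi\whPhi = \mathrm{id}[-1]$, the transform is reflexive and hence locally free on all of $X$. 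The main obstacle throughout will be Step~2, in particular making rigorous the wall enumeration and verifying that the single exclusion \eqref{eq:wallnotlimitcurve}, combined with the asymptotics from Proposition~\ref{prop:Potentialwallsinlambda-qplane}, suffices to eliminate every Arcara--Miles-type destabilizer for $\lambda$ small.
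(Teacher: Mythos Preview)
Your overall architecture matches the paper's: invoke Arcara--Miles to control destabilisers, compare wall asymptotics with the volume section, apply Lemma~\ref{lem:Briminiwallboundedness}, then Theorem~\ref{thm:Lo14Thm5-analogue}(B). The gap is in Step~2, in how you invoke Arcara--Miles.

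You state that Arcara--Miles ``restricts any candidate destabilizing subsheaf $A$ to the form $A = I_Z(b\Theta)$ with $0 \leq b \leq a_L$''. This is not what the theorem says, and it is not what the paper uses. Under the hypotheses here ($X$ of Picard rank two, $e>0$), Lemma~\ref{lem:sectionisonlynegcurv} shows $\Theta$ is the unique irreducible negative curve, and Arcara--Miles' theorem \cite[Theorem~1.1]{ARCARA20161655} then asserts that the \emph{only} destabilising subobject of $L=\OO_X(a_L\Theta)$ in any $\Bc_{\omega,B}$ is $L(-\Theta)=\OO_X((a_L-1)\Theta)$. There is exactly one Bridgeland wall, with asymptotic $q\sim \tfrac{e}{2}a_L(a_L-1)/(2\lambda)$ by Proposition~\ref{prop:Potentialwallsinlambda-qplane}(C1). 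Comparing with the volume section's asymptotic $q\sim(\alpha+m-e)/(2\lambda)$, condition~\eqref{eq:wallnotlimitcurve} ensures the two curves separate for small $\lambda$, and you are done. Your weaker enumeration is both unjustified (even at the level of rank-one subsheaves you omit the $cf$ part of $\ch_1$, and you have not argued that $\Bc_\omega$-subobjects are sheaves) and insufficient: if the other $I_Z(b\Theta)$ really were candidate destabilisers, the single exclusion~\eqref{eq:wallnotlimitcurve} would not rule out the walls at $\omega^2=ea_Lb$ for $b<a_L-1$, and your appeal to ``the asymptotic analysis of Proposition~\ref{prop:Potentialwallsinlambda-qplane}'' does not fix this, since that proposition only computes asymptotics and does not decide which potential walls are actual.

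Your Step~4 also works harder than necessary. The paper proves local freeness directly (Lemma~\ref{lem:OXThetatransf}): for any $T\in\Coh^{\leq 0}(X)$ one has $\Ext^1(T,\whPhi\OO_X(a_L\Theta))\cong\Hom(\Phi T[1],\OO_X(a_L\Theta))=0$ because $\Phi T[1]$ is a torsion fiber sheaf, and then Lemma~\ref{lem:surfaceshlocallyfreechar} applies. This avoids the base-change and reflexive-hull detour.
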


\textbf{Key idea of proof.} The key idea  is that  there is only one wall that is  of the form $W(\ch(\OO_X(a_L\Theta)),-)$, and the condition \eqref{eq:wallnotlimitcurve} ensures that, for $\lambda>0$ sufficiently small, the curve along which we define `limit Bridgeland stability' \eqref{eq:curve-in-lambda-q-plane} either lies above the wall or below the wall.





\begin{lem}\label{lem:OXThetatransf}
For any positive integer $n$, the line bundle $\OO_X(n\Theta)$ is $\Phi$-WIT$_0$, and $\wh{\OO_X(n\Theta)} = \whPhi \OO_X(n\Theta)$ is a locally free sheaf.
\end{lem}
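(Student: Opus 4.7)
The plan is to verify both assertions by reducing to a fiberwise computation and then applying cohomology and base change.

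First, since $p$ is a Weierstra{\ss} fibration, the section $\Theta$ meets every fiber $X_b$ at a smooth point $p_b$, so the restriction $\OO_X(n\Theta)|_{X_b}$ is the honest line bundle $\OO_{X_b}(np_b)$ of degree $n \geq 1$ on the integral, Gorenstein, arithmetic-genus-one curve $X_b$. In particular, $\OO_X(n\Theta)$ is $p$-flat with constant fiber degree.

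Next, I would examine the restrictions of the relative Poincar\'e kernel $\mathcal{P}$ along the second projection $p_2 : X \times_B X \to X$, which is proper and flat. For a closed point $x \in X_b \subset X$, the fiber of $p_2$ over $x$ is $X_b$ itself, and $\mathcal{P}|_{\{x\} \times X_b}$ is by construction the rank-one, degree-zero, torsion-free sheaf on $X_b$ represented by $x$. Its tensor product with $\OO_{X_b}(np_b)$ is a rank-one torsion-free sheaf of Euler characteristic $n$ on $X_b$. Using $\omega_{X_b} \cong \OO_{X_b}$ and the reflexive-hull trick --- namely, Serre duality identifies $H^1$ with the sections of the dual of the double dual, whose line bundle has degree $-(n+\ell) < 0$ where $\ell \geq 0$ is the length of the double-dual quotient --- one obtains
\[
H^1\bigl(X_b,\,(\mathcal{P} \otimes p_1^\ast \OO_X(n\Theta))|_{\{x\} \times X_b}\bigr) = 0,\qquad h^0 = n.
\]

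Then, since the sheaf $\mathcal{P} \otimes p_1^\ast \OO_X(n\Theta)$ is $p_2$-flat (as $\mathcal{P}$ is, and $\OO_X(n\Theta)$ is locally free), cohomology and base change along the proper flat morphism $p_2$ upgrades the pointwise $R^1$-vanishing to the global vanishing $R^1 p_{2\ast}(\mathcal{P} \otimes p_1^\ast \OO_X(n\Theta)) = 0$. This is precisely the $\Phi$-WIT$_0$ property. With constant fiberwise $h^0 = n$, Grauert's theorem forces $p_{2\ast}(\mathcal{P} \otimes p_1^\ast \OO_X(n\Theta))$ to be locally free of rank $n$ on $X$, yielding the local freeness of $\wh{\OO_X(n\Theta)}$.

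The main step to be careful about is the $H^1$-vanishing on the \emph{singular} Weierstra{\ss} fibers, where $\mathcal{P}|_{\{x\} \times X_b}$ can fail to be a line bundle. This is handled by the reflexive-hull argument sketched above, which is effectively the content of the Fourier-Mukai computation on Weierstra{\ss} curves in \cite[Section 6.2]{FMNT}.
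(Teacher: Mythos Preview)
Your argument is correct and takes a genuinely different route from the paper's. You work directly with cohomology and base change: after verifying that each fiberwise restriction of $\mathcal{P} \otimes p_1^\ast \OO_X(n\Theta)$ along $p_2$ is a rank-one torsion-free sheaf of positive degree on a Weierstra{\ss} curve, the $H^1$-vanishing (via Serre duality with $\omega_{X_b} \cong \OO_{X_b}$, or via \cite[Section 6.2]{FMNT} for the singular fibers as you note) combined with Grauert's theorem delivers both the WIT$_0$ property and local freeness of the pushforward in a single stroke. The paper instead proceeds modularly through the established Fourier--Mukai toolkit: fiberwise WIT$_0$ is quoted from \cite[Proposition 6.38]{FMNT}, globalized via \cite[Lemma 3.6]{Lo11}, and torsion-freeness is taken from \cite[Lemma 2.11]{Lo7}; local freeness is then deduced separately using the surface criterion of Lemma~\ref{lem:surfaceshlocallyfreechar}, by translating $\Ext^1(T,\wh{\OO_X(n\Theta)})$ through the equivalence into $\Hom(\wh{T},\OO_X(n\Theta))$, which vanishes because $\wh{T}$ is a fiber (hence torsion) sheaf and $\OO_X(n\Theta)$ is torsion-free. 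Your approach is more self-contained and elementary; the paper's, by contrast, showcases how the $\Ext$-criterion for local freeness interacts with the autoequivalence---a device that recurs elsewhere in the article. One small remark: your phrasing of the ``reflexive-hull trick'' is slightly garbled (Serre duality gives $H^1(F)^\ast \cong \Hom(F,\OO_{X_b})$, and a nonzero such map would be an injection with cokernel of negative length), but since you defer the singular-fiber details to \cite{FMNT} this does not affect the validity of the argument.
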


\begin{proof}
For every closed point $s \in B$, the restriction $\OO_X(n\Theta)|_s$ is a line bundle of positive degree on the fiber $X_s$, and hence a $\whPhi_s$-WIT$_0$ sheaf \cite[Proposition 6.38]{FMNT}.  Thus $\OO_X(n\Theta)$ itself is $\whPhi$-WIT$_0$ by \cite[Lemma 3.6]{Lo11}, and the transform $\wh{\OO_X(\Theta)}$ is torsion-free by \cite[Lemma 2.11]{Lo7}.

To see that the transform $\wh{\OO_X(n\Theta)}$ is locally free, take any sheaf $T$ supported in dimension 0; then
\[
  \Ext^1 (T, \wh{\OO_X(n\Theta)}) \cong \Ext^1 (\wh{T}, \OO_X(n\Theta) [-1]) \cong \Hom (\wh{T}, \OO_X(n\Theta)) = 0
\]
where the last equality holds since $\OO_X(n\Theta)$ is torsion-free, and since $T$ is a $\Phi$-WIT$_0$ sheaf whose transform is a fiber sheaf.  Lemma \ref{lem:surfaceshlocallyfreechar} then implies that $\wh{\OO_X(n\Theta)}$ is locally free.
\end{proof}

\paragraph The \label{para:Nonproduct} Weierstra{\ss} elliptic surface $X$ is a product if and only if $\mathbb{L}=\OO_B$ by  \cite[Lemma (III.1.4)]{MirLec}.  Therefore, if $e>0$ then the Weierstra{\ss} surface $X$ cannot be a product.

\begin{lem}\label{lem:degle1pr2uniquesec}
Let $p : X \to B$ be a Weierstra{\ss} elliptic surface with a section $\Theta$, and  suppose $e>0$.
Then $X$ is of Picard rank two if and only if $\Theta$ is the unique section.
\end{lem}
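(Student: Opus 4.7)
The plan is to settle both directions by purely intersection-theoretic bookkeeping, using \ref{para:NSX} and \ref{para:geometry of X} together with the standard positivity of intersections of distinct irreducible curves on a smooth projective surface. The condition $e>0$ enters only at the very end, but in an essential way.

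For the $(\Leftarrow)$ direction, assume $\Theta$ is the unique section of $p$. By \ref{para:NSX}, $\mathrm{NS}(X)$ is generated by $f$ together with the sections $\Theta_0, \ldots, \Theta_r$, so under our hypothesis $\mathrm{NS}(X)$ is generated by $f$ and $\Theta$ alone. The Gram matrix of $\{f,\Theta\}$ under the intersection pairing is
\[
\begin{pmatrix} 0 & 1 \\ 1 & -e \end{pmatrix},
\]
which has determinant $-1$, so $f$ and $\Theta$ are numerically independent. Hence $\rho(X)=2$.

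For the $(\Rightarrow)$ direction, assume $\rho(X)=2$ and let $\Theta'$ be any section of $p$. Since $\{f,\Theta\}$ is numerically independent (by the computation just made) and $\mathrm{NS}(X)\otimes\QQ$ is $2$-dimensional, the classes $f,\Theta$ span $\mathrm{NS}(X)\otimes\QQ$. Write $\Theta'\equiv a\Theta+bf$ numerically for some $a,b\in\QQ$. Intersecting with $f$ and using $\Theta'\cdot f=\Theta\cdot f=1$ forces $a=1$; substituting into $(\Theta')^2=-e$ (see \ref{para:geometry of X}) yields $-e+2b=-e$, so $b=0$. Therefore $\Theta'\equiv\Theta$ numerically, and in particular
\[
\Theta'\cdot\Theta \;=\; \Theta\cdot\Theta \;=\; -e.
\]

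Finally, we invoke $e>0$. If $\Theta'\neq\Theta$ as subvarieties of $X$, then $\Theta$ and $\Theta'$ are two distinct irreducible curves on the smooth projective surface $X$; such curves intersect properly, and the local intersection multiplicities at the finitely many intersection points are nonnegative, giving $\Theta'\cdot\Theta\geq 0$. This contradicts $\Theta'\cdot\Theta=-e<0$, so $\Theta'=\Theta$. The main (and essentially only) obstacle is seeing why a numerical equivalence of sections forces equality as divisors; the observation that $e>0$ makes the putative self-intersection $\Theta'\cdot\Theta$ negative, ruling out distinctness, is the crucial ingredient, and is precisely the role played by the hypothesis on $e$.
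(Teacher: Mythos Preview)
Your proof is correct and the $(\Rightarrow)$ direction is essentially identical to the paper's: write $\Theta'\equiv a\Theta+bf$, use $\Theta'\cdot f=1$ and $(\Theta')^2=-e$ to force $a=1$, $b=0$, then use $\Theta'\cdot\Theta=-e<0$ to conclude the irreducible curves coincide.

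For $(\Leftarrow)$ there is a small difference in packaging. The paper argues that uniqueness of the section makes the Mordell-Weil group trivial and then invokes the Shioda-Tate formula \cite[Corollary (VII.2.4)]{MirLec}. You instead go directly through the generating set in \ref{para:NSX} (which cites \cite[VII.2.1]{MirLec}) and a Gram-matrix computation. Both routes draw on the same source material and amount to the same thing for a Weierstra{\ss} surface (where all fibers are irreducible); your version is marginally more self-contained since \ref{para:NSX} is already stated in the paper, while the paper's version names the standard structural result explicitly.
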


\begin{proof}
Suppose $X$ has Picard rank two.  Then $\mathrm{NS}(X)$ is generated by the class of a section $\Theta'$ and the fiber class $f$ \cite[Theorem (VII.2.1)]{MirLec}.  We will now prove that $\Theta'$ and $\Theta$ are the same curve, and not merely the same curve class.  Suppose
\begin{equation}\label{eq:TTpf}
\Theta'=a\Theta+bf \text{\qquad in  $\mathrm{NS}(X)$}.
\end{equation}
Intersecting with $f$ on both sides of \eqref{eq:TTpf} gives $a=1$.  Squaring both sides of \eqref{eq:TTpf} gives
\[
(\Theta')^2 = \Theta^2 + 2b.
\]
Now, we have $(\Theta')^2=\Theta^2=-e$ by adjunction, and so $b=0$, giving us $\Theta' = \Theta$ in $\mathrm{NS}(X)$.  Thus
\[
  \Theta.\Theta' = \Theta^2 = -e  < 0;
\]
since both $\Theta', \Theta$ are irreducible curves, this implies $\Theta'$ and $\Theta$ are the same curve.  Thus $p$ has a unique section.

Conversely, if $p$ has a unique section $\Theta$, then the Mordell-Weil group $\mathrm{MW}(X)$ of $X$ is trivial.  Then by the Shioda-Tate formula \cite[Corollary (VII.2.4)]{MirLec}, the Picard rank of $X$ must be two.
\end{proof}

\begin{lem}\label{lem:sectionisonlynegcurv}
Let $p : X \to B$ be a Weierstra{\ss} elliptic surface  with a section $\Theta$, and suppose $X$ has  Picard rank two. Suppose also that  $e>0$.  Then    $\Theta$ is the only irreducible negative curve on $X$.
\end{lem}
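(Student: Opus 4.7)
The plan is to use the description of the Mori cone in Lemma \ref{lem:nefmoricone} to parametrize irreducible curves, and then derive a contradiction from the intersection of a hypothetical negative curve with $\Theta$.

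First, let $C$ be any irreducible negative curve on $X$. By Lemma \ref{lem:nefmoricone}, we may write $C \equiv a\Theta + bf$ with $a, b \in \mathbb{Z}_{\geq 0}$, and compute
\[
C^2 = -ea^2 + 2ab = a(2b - ea).
\]
The case $a=0$ forces $C \equiv bf$ with $C^2 = 0$, contradicting $C^2 < 0$. Hence $a \geq 1$, and the negativity of $C^2$ is equivalent to $2b < ea$.

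Next I would separate out $C = \Theta$ by examining the class $(a,b) = (1,0)$: such a $C$ satisfies $C\cdot f = 1$, so $p|_C$ has degree one and $C$ is a section; by Lemma \ref{lem:degle1pr2uniquesec} the only section is $\Theta$, so $C = \Theta$. This leaves the classes with either $a \geq 2$ or $(a,b) = (1,b)$ with $b \geq 1$, and in both cases $C \neq \Theta$ as curves. The key step is then intersection with $\Theta$: since $C$ and $\Theta$ are distinct irreducible curves they share no common component, so
\[
0 \leq C \cdot \Theta = -ea + b,
\]
i.e.\ $b \geq ea$. Combining with $2b < ea$ from the negativity of $C^2$ yields $2ea \leq 2b < ea$, which forces $ea < 0$. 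This contradicts $e > 0$ and $a \geq 1$, completing the argument.

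There is no real obstacle here; the only subtlety is the $(1,0)$ case, where having the same class as $\Theta$ does not immediately mean being the same curve, but this is handled cleanly by invoking Lemma \ref{lem:degle1pr2uniquesec} via the section argument. All other classes are ruled out uniformly by the $\Theta$-intersection trick.
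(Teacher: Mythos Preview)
Your proof is correct but takes a somewhat different route from the paper's. The paper invokes the general fact that an irreducible curve of negative self-intersection is extremal in $\overline{\mathrm{NE}}(X)$ \cite[Lemma 1.22]{KM}; by Lemma \ref{lem:nefmoricone} this forces $C$ onto one of the two extremal rays, spanned by $\Theta$ and $f$. Since $f^2=0$, only the $\Theta$-ray survives, and then $C\cdot\Theta = \Theta^2 <0$ forces $C=\Theta$ as curves. Your argument avoids the extremality lemma entirely and instead parametrises effective classes as $a\Theta+bf$ with $a,b\in\mathbb{Z}_{\geq 0}$, deriving the contradiction $2ea\leq 2b<ea$ from $C\cdot\Theta\geq 0$ versus $C^2<0$. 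This is more elementary and self-contained.

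One small redundancy: your separate treatment of the $(1,0)$ case via Lemma \ref{lem:degle1pr2uniquesec} is unnecessary. The same intersection trick you use elsewhere disposes of it directly: if $C\equiv\Theta$ but $C\neq\Theta$ as curves, then $0\leq C\cdot\Theta=\Theta^2=-e<0$, a contradiction. This is exactly how the paper closes, and it unifies all cases without appealing to the section-uniqueness lemma.
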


\begin{proof}
Suppose $C$ is an irreducible negative curve on $X$.  Then $C$ must be extremal in $\overline{\mathrm{NE}}(X)$ by \cite[Lemma 1.22]{KM}.    Lemma \ref{lem:nefmoricone} then implies  either $C \equiv \Theta$ or $C\equiv f$.  Since $e>0$, we have $\Theta^2=-e<0$, i.e.\ $\Theta$ is a negative curve, while $f$ is not.  Hence $C \equiv \Theta$.  Then $C.\Theta = \Theta^2<0$, which in turn implies the curve $C$ coincides with the curve $\Theta$.
\end{proof}
Note that, under the hypotheses of Lemma \ref{lem:sectionisonlynegcurv}, we can also conclude that $\Theta$ must be the unique section, which is the `only if' direction of Lemma \ref{lem:degle1pr2uniquesec}.

\paragraph[An example] An \label{para:assumpuniquesec} example of a Weierstra{\ss} surface $p : X \to B$ such that $X$ has Picard rank two, and where $e >0$, is  an elliptic K3 surface referred to as the Bryan-Leung K3 surface in \cite[Section 2.2]{oberdieck2018}. In this example, we have  $B=\mathbb{P}^1$, $e=2$, and $p$ has exactly 24 singular fibers, all of which are nodal.


\paragraph Suppose \label{para:Arcara-Miles} $p : X \to B$ is a Weierstra{\ss} surface such that $X$ has Picard rank two and $e>0$.  By Lemma \ref{lem:sectionisonlynegcurv}, there is a unique negative curve on $X$, and it is the unique section of $p$ (see also Lemma \ref{lem:degle1pr2uniquesec}).  A theorem of  Arcara-Miles \cite[Theorem 1.1]{ARCARA20161655} now tells us that the only object that could destabilise a line bundle $L$ with respect to a Bridgeland stability in \eqref{eq:Bristab4dimsubsp} is $L(-\Theta)$. Following the notation in Proposition \ref{prop:Potentialwallsinlambda-qplane}, we have
$$
(x,0,z)=(1,0,0)  \text{\quad and \quad} (r,k\Theta+pf,\chi)=(1,-\Theta,-\tfrac{e}{2})
$$
so that $k=-1$. Suppose $L$ is of the form $\OO_X(a_L\Theta)$ with $a_L>1$. By Proposition~\ref{prop:Potentialwallsinlambda-qplane}(C1), the wall $W(\ch(\OO_X(a_L\Theta)), \ch(\OO_X(a_L-1)\Theta))$ is asymptotic to
\begin{equation} \label{eq:Picard rank 2 surface wall of line bundle}
q=\frac{1}{2\lambda} \frac{e}{2}a_L(a_L-1) \text{\quad as $\lambda \to 0^+$}.
\end{equation}


\begin{proof}[Proof of Proposition \ref{prop:OxaLThetatransformss}]
Let $\sigma$ be any Bridgeland stability satisfying the stated hypothesis.  From \ref{para:Arcara-Miles}, We know that $W(\ch(\OO_X(a_L\Theta)), \ch(\OO_X(a_L-1)\Theta))$ is the only wall in the $(\lambda,0,0,q)$-plane for the numerical type of $\OO_X(a_L\Theta)$.  Comparing the asymptotic behaviour of \eqref{eq:curve-in-lambda-q-plane}, namely \eqref{eq:asymptotic-curve-in-lambda-q-plane}, with the asymptotic equation of the wall, namely \eqref{eq:Picard rank 2 surface wall of line bundle}, we see that \eqref{eq:wallnotlimitcurve} ensures $\sigma$ lies in a chamber of Bridgeland stability whenever  $\sigma$ lies on \eqref{eq:curve-in-lambda-q-plane} with $\lambda$ sufficiently small.  (Depending on whether the curve \eqref{eq:curve-in-lambda-q-plane} lies above or below the unique wall as $\lambda \to 0^+$, the Bridgeland stability $\sigma$ lies in either the Gieseker chamber or the other chamber.)

Lemma \ref{lem:Briminiwallboundedness} now implies that $\OO_X(a_L\Theta)$ is $Z^l$-stable.  Since $\OO_X(a_L\Theta)$ is a $\whPhi$-WIT$_0$ sheaf by Lemma \ref{lem:OXThetatransf}, Theorem \ref{thm:Lo14Thm5-analogue}(B) says its transform $\whPhi \OO_X(a_L\Theta)$ is a $\mu_{\oo}$-semistable torsion-free sheaf, which must be locally free by Lemma \ref{lem:OXThetatransf}. For $\oo$ as \eqref{eq:oonotation}, we further take $\beta=\alpha>0$ so that $\oo=\Theta+(\alpha+m)f$.
\end{proof}


%

\paragraph[Comparison with an argument of Bridgeland-Maciocia's]  Contained in the proof of Bridgeland-Maciocia's result \cite[Theorem 1.4]{BMef} is an argument that   shows that the transform $\whPhi \OO_X(a_L\Theta)$ is a $\mu_{\widetilde{\omega}}$-stable torsion-free sheaf for $\widetilde{\omega}$ sufficiently close to the fiber direction, where `sufficiently close' depends on the Chern classes of $\whPhi \OO_X(a_L\Theta)$.  The argument proceeds  as follows: since $\OO_X (a_L\Theta)$ is a torsion-free $\whPhi$-WIT$_0$ sheaf by Lemma \ref{lem:OXThetatransf}, it follows that $\whPhi \OO_X(a_L\Theta)$ is a torsion-free sheaf.  That the restriction of $\whPhi \OO_X(a_L\Theta)$ to the generic fiber of the fibration $p$ is a stable sheaf follows from \cite[Lemma 9.5]{BMef}; then for $\widetilde{\omega} = \Theta + kf$ where $k \gg 0$, we know  $\whPhi \OO_X(a_L\Theta)$ is $\mu_{\widetilde{\omega}}$-stable from the proof of  \cite[Lemma 2.1]{BMef}.

We note that  Bridgeland-Maciocia's approach  begins with a torsion-free sheaf which restricts to a stable sheaf on the generic fiber of the elliptic fibration, while our approach  begins with a limit Bridgeland stable object (which is allowed to be a complex).


\begin{rem}
At first glance,  the statement of Proposition \ref{prop:OxaLThetatransformss} appears to be similar to that of \cite[Theorem 4.4]{LZ3}, which says that on a Weierstra{\ss} threefold $p : X \to S$ where $X$ is $K$-trivial and $K_S$ is numerically $K$-trivial, any line bundle of nonzero fiber degree on $X$ is taken by a  Fourier-Mukai transform to a $\mu_{\widetilde{\omega}}$-stable locally free sheaf, for any polarisation $\widetilde{\omega}$.  One quickly finds, however, that the argument in \cite{LZ3} does not carry over directly to the situation of Proposition \ref{prop:OxaLThetatransformss}.  A technical reason is that the base of the fibration in Proposition \ref{prop:OxaLThetatransformss} is $\mathbb{P}^1$, which is not numerically $K$-trivial.
\end{rem}

\paragraph In proving Proposition \ref{prop:OxaLThetatransformss}, we relied on Arcara-Miles' result that there is only one possible destabilising object for a line bundle, if the surface contains a unique negative curve.  This is only one half of their theorem \cite[Theorem 1.1]{ARCARA20161655}; the other half of their theorem states that the result holds also for surfaces with no negative curves (such as  $C \times \mathbb{P}^1$ where $C$ is an elliptic curve).  For such and other surfaces for which Arcara-Miles' theorem holds, it seems plausible that an analogue of Proposition \ref{prop:OxaLThetatransformss} would hold.

%

\appendix

\section{Bridgeland wall-chamber structures}
Let \label{sec:Bridgeland wall-chamber structures} $X$ be a smooth projective surface. We briefly recall the wall-chamber structures in the Bridgeland stability manifold $\Stab(X)$. We will consider the  stability conditions $\sigma_{\omega,B}$ defined in \ref{para:Bridgestabonsurfaces}. Our study of  wall and chamber structures consists of two steps: (i) We fix a `frame' and write $\omega$ and $B$ with respect to the frame as in \eqref{eq-coordinate}, and study potential walls; (ii) we move the frame. Step (i) follows the work of Maciocia \cite{Mac2}. We give an example of step (ii) on elliptic surfaces in \ref{para:movingframe}, by varying a parameter $\lambda$.

By \emph{fixing a frame}, we  mean that we fix a triple $(H, H^\perp,w)$ where $H$ is an ample $\mathbb{R}$-divisor on $X$,   $H^\perp$ is an $\mathbb{R}$-divisor satisfying $H.{H^\perp}=0$, and $w$ is  a real number.  The divisor $H^\perp$ is taken to be zero  if the Picard number of $X$ is one.  In general, the divisor  $H^\perp$ is not unique even up to a scalar multiple if the  Picard number of $X$ is bigger than two. We set
\begin{equation}\label{eq-g-d}
g:=H.H, \quad \delta:=-H^\perp.H^\perp.
\end{equation}
The Hodge Index Theorem implies that $\delta\geq 0$, and $\delta=0$ if and only if ${H^\perp}=0$.

Having  fixed a frame $(H,{H^\perp},w)$, we can then set
\begin{equation}\label{eq-coordinate}
\begin{cases}
\omega:=tH\\
B:=sH+w H^\perp
\end{cases} \text{\quad where $t \in \mathbb{R}_{>0}, s \in \mathbb{R}$}
\end{equation}
and think of $\omega, B$ as depending on $t, s$, respectively.  By varying $w$, we then obtain a $w$-indexed family of $(s,t)$ half-planes in $\Stab (X)$:
$$\Pi_{(H,{H^\perp},w)}:=\{ \sigma_{tH,sH+wH^\perp}\ |\ t \in \mathbb{R}_{>0}, s \in \mathbb{R} \} \subset \Stab(X).$$

Let $\ch=(\ch_0,\ch_1,\ch_2)$ be a fixed Chern character. Following the notations of Maciocia \cite{Mac2}, we rewrite it with respect to the frame $(H,{H^\perp},w)$ as \footnote{ Here the notations are different from \ref{para:cohomFMTformulas}, see footnote~\ref{footnote-different-notations} in \ref{para:cohomFMTformulas}.}
\begin{equation}\label{eq-ch-Maciocia}
\ch=(\ch_0,\ch_1,\ch_2)=(x,y_1 H+ y_2 {H^\perp} + \Delta,z)
\end{equation}
for some real coefficients  $y_1, y_2$ and $\Delta \in \{ H,{H^\perp} \}^{\perp}$, i.e. $\Delta$ is an $\mathbb{R}$-divisor satisfying
$\Delta.H=0$ and $\Delta.{H^\perp}=0$. 
Similarly, we write the potentially destabilising Chern character with respect to the frame as
\begin{equation}\label{eq-ch'-Maciocia}
\ch'=(\ch_0',\ch_1',\ch_2')=(r,c_1 H+ c_2 {H^\perp}+\Delta',\chi)
\end{equation}
for some real coefficients  $c_1, c_2$ and $\Delta' \in \{ H,{H^\perp} \}^{\perp}$. For fixed $\ch, \ch'$, the corresponding  \emph{potential wall} is defined as stability conditions where objects in the heart of characters $\ch$ and $\ch'$ have the same phase, i.e.
$$W(\ch,\ch'):=\{\sigma = (\Bc, Z) \in \Stab(X)|\, \Re Z(\ch) \Im Z(\ch')-\Re Z(\ch') \Im Z(\ch)=0\}.$$
In the notation $\sigma = (\Bc,Z)$ above for a Bridgeland stability, $\Bc$ is a heart and $Z$ is the central charge of the stability condition.  A potential wall $W(\ch,\ch')$ is a \emph{Bridgeland wall} if there is a $\sigma=(\Bc, Z)\in W(\ch,\ch')$ together with $\sigma$-semistable objects $G\subset F\in\Bc$ such that $\ch(F)=\ch$, $\ch(G)=\ch'$.

Fix a frame $(H,{H^\perp},w)$. Following the idea of Li-Zhao \cite{LZ19}, we define $\sigma'_{\omega,B}=(Z'_{\omega,B}, \Bc'_{\omega, B})$ as the right action of $\begin{pmatrix}
1 & 0\\
-\frac{s}{t} & \frac{1}{t}
\end{pmatrix}$ on $\sigma_{\omega,B}$, i.e. $\Bc'_{\omega, B}:=\Bc_{\omega, B}$ and
\[
  (\Re Z'_{\omega,B}, \Im Z'_{\omega,B}) := (\Re Z_{\omega,B}, \Im Z_{\omega,B} ) \begin{pmatrix} 1  & 0 \\ -\tfrac{s}{t} & \tfrac{1}{t} \end{pmatrix}.
\]
Thus
\begin{equation} \label{eq-Z-Z'}
Z'_{\omega,B}(F)=\left(\Re Z_{\omega,B}(F)-\tfrac{s}{t}\Im Z_{\omega,B}(F)\right)+\tfrac{1}{t}i\Im Z_{\omega,B}(F).
\end{equation}
By varying $w$ again, we obtain another $w$-indexed family of half planes with coordinates $(s,t)$ (which is different from the $\Pi_{(H,H^\perp,w)}$ defined earlier):
$$\Pi'_{(H,{H^\perp},w)}:=\{ \sigma'_{tH,sH+wH^\perp}\ |\ t \in \mathbb{R}_{>0}, s \in \mathbb{R} \} \subset \Stab(X).$$
\begin{lem}
Fix a frame $(H,{H^\perp},w)$. The above right action identifies the potential walls $W(\ch,\ch')$ in the $(s,t)$-plane $\Pi_{(H,{H^\perp},w)}$ with the potential walls $W(\ch,\ch')$ in the $(s,t)$-plane $\Pi'_{(H,{H^\perp},w)}$.
\end{lem}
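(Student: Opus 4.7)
The plan is a direct computation: both half-planes $\Pi_{(H,H^\perp,w)}$ and $\Pi'_{(H,H^\perp,w)}$ are parametrised by the same coordinates $(s,t)$, and the right action of $\begin{pmatrix} 1 & 0 \\ -s/t & 1/t \end{pmatrix}$ fixes the heart while changing only the central charge via \eqref{eq-Z-Z'}. Consequently, membership in a potential wall $W(\ch,\ch')$ depends only on whether the $2\times 2$ determinant $\Re Z(\ch)\,\Im Z(\ch') - \Re Z(\ch')\,\Im Z(\ch)$ vanishes, and this quantity is multiplied by the (positive) determinant of the action matrix.

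Concretely, I would first record from \eqref{eq-Z-Z'} that for any $F \in D^b(X)$,
\[
  \Re Z'_{\omega,B}(F) = \Re Z_{\omega,B}(F) - \tfrac{s}{t}\Im Z_{\omega,B}(F), \qquad \Im Z'_{\omega,B}(F) = \tfrac{1}{t}\Im Z_{\omega,B}(F).
\]
Then I would compute
\begin{align*}
  &\Re Z'_{\omega,B}(\ch)\,\Im Z'_{\omega,B}(\ch') - \Re Z'_{\omega,B}(\ch')\,\Im Z'_{\omega,B}(\ch) \\
  &\quad = \bigl[\Re Z_{\omega,B}(\ch) - \tfrac{s}{t}\Im Z_{\omega,B}(\ch)\bigr]\tfrac{1}{t}\Im Z_{\omega,B}(\ch')
   - \bigl[\Re Z_{\omega,B}(\ch') - \tfrac{s}{t}\Im Z_{\omega,B}(\ch')\bigr]\tfrac{1}{t}\Im Z_{\omega,B}(\ch) \\
  &\quad = \tfrac{1}{t}\bigl(\Re Z_{\omega,B}(\ch)\,\Im Z_{\omega,B}(\ch') - \Re Z_{\omega,B}(\ch')\,\Im Z_{\omega,B}(\ch)\bigr),
\end{align*}
the cross terms involving $s/t$ cancelling.

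Since $t>0$ throughout $\Pi_{(H,H^\perp,w)}$ and $\Pi'_{(H,H^\perp,w)}$, this identity shows that at any point $(s,t)$, the wall equation defining $W(\ch,\ch')$ holds for $\sigma_{tH,sH+wH^\perp}$ if and only if it holds for $\sigma'_{tH,sH+wH^\perp}$. Using the bijection $\sigma_{\omega,B} \mapsto \sigma'_{\omega,B}$ between the two half-planes, I would then conclude that the locus of the potential wall $W(\ch,\ch')$ in $\Pi_{(H,H^\perp,w)}$ is identified with that in $\Pi'_{(H,H^\perp,w)}$. There is no real obstacle here; the content of the statement is purely that the right action has positive determinant and is a bijection on the half-planes, so the potential wall loci are preserved set-theoretically in the $(s,t)$-coordinates.
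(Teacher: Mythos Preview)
Your proof is correct; the computation showing that the wall determinant is rescaled by the positive factor $1/t$ under the right action is exactly the point. The paper does not give its own argument here but simply cites \cite[Lemma 2.6]{Liu18}, whose content is precisely this direct verification, so your approach matches what the cited proof does.
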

\begin{proof}
\cite[Lemma 2.6]{Liu18}.
\end{proof}

Fix a frame $(H,{H^\perp},w)$.   We introduce $(s,q)$-coordinates in addition to $(s,t)$-coordinates via the change of variables
\begin{equation} \label{eq-q}
q:=\frac{s^2+t^2}{2}
\end{equation}
(note that $t>0$).  This way, there is a bijection between the `$(s,t)$-plane'
\[
  \{ (s,t) : s \in \mathbb{R}, t \in \mathbb{R}_{>0} \}
\]
and the `$(s,q)$-plane'
\[
  \{ (s,q) : s \in \mathbb{R}, q \in \mathbb{R}_{>0}, q > \tfrac{1}{2}s^2 \}.
\]
The family $\Pi'_{(H,{H^\perp},w)}$ of $(s,t)$-planes will be referred to  as  the family  $\Sigma_{(H,{H^\perp},w)}$ when using  $(s,q)$-coordinates.

The advantage of the $(s,q)$-coordinate is that potential walls will be semi-lines (instead of semi-circles in the $(s,t)$-coordinate). We will write \footnote{ \label{footnote: symbol of sigma-s-q} Note that $\sigma_{s,q}$ still depends on the choice of a frame $(H,{H^\perp},w)$ even though that is suppressed in the notation.}
\begin{equation} \label{eq-sigma-s-q}
\sigma_{s,q}:=\sigma'_{tH,sH+wH^\perp}.
\end{equation}
The associated central charge, given by  (\ref{eq-Z-Z'}), can be rewritten in $(s,q)$-coordinates as
\begin{eqnarray}\label{eq-central-charge}
Z_{s,q}(F)&:=&(-\ch_2(F)+\ch_0(F) g q)+\left(\tfrac{1}{2}\ch_0(F) \delta w^2 + w \ch_1(F).{H^\perp}\right)\nonumber\\
& &+i(\ch_1(F).H-\ch_0(F) g s).
\end{eqnarray}

\begin{table}
\caption {A summary of notations for $(s,t)$- and $(s,q)$-planes after fixing a frame $(H,{H^\perp},w)$. Here we take $(\omega, B)$ as in \eqref{eq-coordinate}, $s, t \in \mathbb{R}$ with $t>0$ and $q$ is given by \eqref{eq-q}. In particular, $q>\frac{1}{2}s^2$.}
\begin{center}
\begin{tabular}{ c | c | c }
\hline
$(s, t)$-plane $\Pi_{(H,{H^\perp},w)}$ & $(s, t)$-plane $\Pi'_{(H,{H^\perp},w)}$ & $(s, q)$-plane $\Sigma_{(H,{H^\perp},w)}$ \\
$\sigma_{tH,sH+wH^\perp}$ & $\sigma'_{tH,sH+wH^\perp}$ & $\sigma_{s,q}:=\sigma'_{tH,sH+wH^\perp}$ \\
\eqref{eq:Zwformula} & \eqref{eq-Z-Z'} & \eqref{eq-central-charge} \\
\hline
\end{tabular}
\end{center}
\end{table}

We call a Chern character $\ch=\left(\ch_0,\ch_1,\ch_2\right)$ of Bogomolov type if
\begin{equation}
    \label{eq:Bogomolovtype}
    \ch_1^2-2\ch_0\ch_2\geq 0.
\end{equation}

\begin{lem}[Bertram's nested wall theorem in $(s,q)$-plane]\label{lem:Bertram} Fix a Chern character $\ch$ of Bogomolov type. Fix a frame $(H,{H^\perp},w)$ and denote $g$, $\delta$ as (\ref{eq-g-d}). Use the notations for $\ch$, $\ch'$ as (\ref{eq-ch-Maciocia}) and
(\ref{eq-ch'-Maciocia}).
\begin{itemize}
\item[(A)]  Suppose $x\neq 0$. Then all  potential walls $W(\ch,\ch')$   in the $(s,q)$-plane $\Sigma_{(H,{H^\perp},w)}$ are given by semi-lines passing through the same point $P(\ch):=\left(\frac{y_1}{x}, \frac{1}{2}\left(\frac{y_1^2}{x^2}-F(\ch)\right)\right)$ with slopes $C(\ch,\ch')$ \footnote{ \label{footnote1} We use the convention that if $xc_1-ry_1=0$, then the slope is infinite and the wall is the semi-line $s=\frac{y_1}{x}$ with $q>\frac{s^2}{2}$.}:
\begin{equation}\label{eq-wall-passing-fixed-point}
q=C(\ch,\ch')\left(s-\frac{y_1}{x}\right)+\frac{1}{2}\left(\frac{y_1^2}{x^2}-F(\ch)\right), \qquad (q>\tfrac{s^2}{2}),
\end{equation}
where
\begin{eqnarray}
C(\ch,\ch')&:=&\frac{x\chi-rz +w\delta (xc_2-ry_2)}{g(xc_1-ry_1)}, \label{eq-Csigma}\\
F(\ch)&:=&\frac{\delta}{g}\left(w-\frac{y_2}{x}\right)^2+\frac{1}{x^2 g}(y_1^2 g - y_2^2 \delta  - 2xz)\geq 0. \label{eq-F}
\end{eqnarray}
In particular,  $P(\ch)$ is on or below the parabola $q=\frac{s^2}{2}$.

\item[(B)] Suppose $x=0$ and $\ch_1 H>0$ (i.e. $y_1>0$). If $r=0$, then the potential wall is given by $y_1\chi=z c_1$, and there is no potential wall in the $(s,q)$-plane. If $r\neq 0$, then all potential walls $W(\ch,\ch')$  in the $(s,q)$-plane are given by semi-lines of the same slope $C=C(\ch)$, and they  pass through points of the form $P'(\ch'):=(\frac{c_1}{r}, \frac{1}{2}\left(\frac{c_1^2}{r^2}-F'(\ch')\right))$:
\begin{equation} \label{eq:wall_dim-1}
q=C(\ch)(s-\frac{c_1}{r})+\frac{1}{2}\left(\frac{c_1^2}{r^2}-F'(\ch')\right), \qquad (q>\tfrac{s^2}{2}),
\end{equation}
where
\begin{eqnarray}
C(\ch)&:=&\frac{z+\delta w y_2}{g y_1},\label{eq-C-dim-1}\\
F'(\ch')&:=&\frac{\delta}{g}(w-\frac{c_2}{r})^2+\frac{1}{r^2 g}(c_1^2 g- c_2^2 \delta-2r\chi). \label{eq-F'}
\end{eqnarray}
 Moreover, if $\ch'$ is also of Bogomolov type, then $F'(\ch')\geq 0$ and $P'(\ch')$ is on or below the parabola $q=\frac{s^2}{2}$.
\end{itemize}
\end{lem}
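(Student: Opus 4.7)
My plan is to prove both parts by direct computation from the central charge formula \eqref{eq-central-charge}, since the wall equation
\[
  \Re Z_{s,q}(\ch)\,\Im Z_{s,q}(\ch') - \Re Z_{s,q}(\ch')\,\Im Z_{s,q}(\ch) = 0
\]
is polynomial in $s$ and $q$. First I would compute, using $H.H^\perp = 0$ and $\Delta.H = \Delta.H^\perp = 0$ together with $g = H^2$, $\delta = -(H^\perp)^2$, that for $\ch = (x, y_1 H + y_2 H^\perp + \Delta, z)$ one has $\ch_1(\ch).H = y_1 g$ and $\ch_1(\ch).H^\perp = -y_2\delta$, so that
\[
  Z_{s,q}(\ch) = \bigl(-z + xgq + \tfrac{1}{2}x\delta w^2 - wy_2\delta\bigr) + i g(y_1 - xs),
\]
and analogously for $\ch'$. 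This reduces the wall equation to an explicit polynomial relation in $s,q$ whose coefficients involve only $x,y_1,y_2,z,r,c_1,c_2,\chi$.

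For part (A), with $x \neq 0$, the coefficient of $q$ in the wall equation is $g(xc_1 - ry_1)$, so when this is nonzero the wall is a semi-line, and solving for $q$ in terms of $s$ yields exactly $q = C(\ch,\ch')(s - s_0) + q_0 + \text{correction}$ with $C(\ch,\ch')$ as in \eqref{eq-Csigma}; when $xc_1 - ry_1 = 0$ the wall degenerates to the vertical semi-line $s = y_1/x$, covered by footnote~\ref{footnote1}. The heart of the argument is then the verification that the walls pass through a common point $P(\ch)$ independent of $\ch'$: I would substitute $s = y_1/x$ into the wall equation and show, by grouping terms, that both sides share a factor of $(xc_1-ry_1)$, so that the $\ch'$-dependent data cancels and one is left with $q = \frac{1}{g}(\frac{z}{x} + \frac{wy_2\delta}{x} - \frac{\delta w^2}{2})$, which one then checks equals $\tfrac{1}{2}(y_1^2/x^2 - F(\ch))$ after expanding the square in \eqref{eq-F}. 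Finally, to see $P(\ch)$ lies on or below the parabola $q = s^2/2$, i.e.\ $F(\ch)\geq 0$: the first summand of $F(\ch)$ is manifestly nonnegative since $\delta \geq 0$ and $g>0$; for the second, the Bogomolov-type inequality $\ch_1^2 - 2xz \geq 0$ expands as $y_1^2 g - y_2^2\delta + \Delta^2 - 2xz \geq 0$, and then the Hodge Index Theorem applied to the $H$-orthogonal class $\Delta$ gives $\Delta^2 \leq 0$, yielding $y_1^2 g - y_2^2 \delta - 2xz \geq -\Delta^2 \geq 0$.

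For part (B), with $x = 0$ and $y_1 > 0$, one immediately sees $Z_{s,q}(\ch) = (-z - wy_2\delta) + i y_1 g$ is constant in $(s,q)$. When $r = 0$ the central charge $Z_{s,q}(\ch')$ is also constant in $(s,q)$, so the wall equation becomes a numerical identity that either holds everywhere or nowhere on $\Sigma_{(H,H^\perp,w)}$; one then reads off the condition by setting the two constant vectors proportional. When $r \neq 0$, the coefficient of $q$ in the wall equation is $y_1 r g \neq 0$, so I solve for $q$ as a linear function of $s$ with slope $C(\ch) = (z + w\delta y_2)/(gy_1)$ depending only on $\ch$. Checking that $P'(\ch') = (c_1/r, \tfrac{1}{2}(c_1^2/r^2 - F'(\ch')))$ lies on this line is a substitution parallel to the one in part (A), with the roles of $\ch$ and $\ch'$ exchanged, and the final inequality $F'(\ch') \geq 0$ under the Bogomolov hypothesis on $\ch'$ follows by the same Hodge Index argument as above.

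I do not expect any serious obstacle: the content is purely algebraic and the organising idea, that one should treat the $(s,q)$-plane as the image of the right action by a certain upper-triangular matrix (as already observed by Li-Zhao and recorded in the passage preceding the lemma), guarantees linearity of walls in $q$ and explains why all potential walls meet at a single $\ch$-determined pivot. The only step requiring care is the cancellation in the pivot verification, where one must resist the temptation to expand indiscriminately and instead factor out $(xc_1 - ry_1)$ early.
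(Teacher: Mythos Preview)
Your proposal is correct: the direct computation from \eqref{eq-central-charge} goes through exactly as you outline, and the key steps (the factorisation at $s=y_1/x$ giving the pivot $P(\ch)$, and the Hodge Index Theorem applied to $\Delta$ to obtain $F(\ch)\geq 0$) are precisely what is needed. The paper itself does not give a proof but simply cites \cite[Lemma 2.8]{Liu18}; your argument is essentially the content of that cited lemma.
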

\begin{proof}
\cite[Lemma 2.8]{Liu18}.
\end{proof}

\begin{lem}[Shift by line bundle] \label{lem:shift by line bundle}
Fix a Chern character $\ch$ of Bogomolov type. Fix a frame $(H,{H^\perp},w)$ and use the notations above. Fix an $\mathbb{R}$-divisor $L$ of the form
\[ L=l_1 H+l_2 H^\perp+\Delta_L
\]
with real coefficients $l_1$ and $l_2$, and $\Delta_L\in \{H,H^\perp \}^\perp$ in $\mathrm{NS}_{\mathbb{R}}(X)$.
\begin{itemize}
\item[(A)] Suppose $x\neq 0$. Then potential walls of the form $W( e^{L}\ch, e^{L}\ch')$ in the $(s,q)$-plane are all given by semi-lines passing through the same point
\[
 P(e^{L}\ch) = P(\ch) + \left( l_1, \frac{1}{2}l_1^2+\frac{y_1}{x}l_1-\frac{\delta}{2g}l_2^2+\frac{\delta}{g}(w-\frac{y_2}{x})l_2+\frac{1}{2g}\Delta_L^2+\frac{\Delta\Delta_L}{xg}\right)
\]
with slopes \footnote{\label{footnote2} We use the convention that if $xc_1-ry_1=0$, then the slope is infinite and the wall is the semi-line $s=\frac{y_1}{x}+l_1$ with $q>\frac{s^2}{2}$.}
\begin{equation}\label{eq:slope-shift}
 C(e^{L}\ch, e^{L}\ch')= C(\ch,\ch')+l_1-l_2\frac{\delta}{g}\frac{xc_2-ry_2}{xc_1-ry_1}+\frac{x\Delta'\Delta_L-r\Delta\Delta_L}{g(xc_1-ry_1)}
\end{equation}
in the region $q>\frac{s^2}{2}$.

\item[(B)] Suppose $x=0$ and $\ch_1 H>0$. Then potential walls of the form $W( e^{L}\ch, e^{L}\ch')$ in the $(s,q)$-plane are all given by semi-lines passing through points
\[
 P'(e^{L}\ch') = P'(\ch') + \left( l_1, \frac{1}{2}l_1^2+\frac{c_1}{r}l_1-\frac{\delta}{2g}l_2^2+\frac{\delta}{g}(w-\frac{c_2}{r})l_2+\frac{1}{2g}\Delta_L^2+\frac{\Delta'\Delta_L}{rg}\right)
\]
with the same slope
\begin{equation}\label{eq:ssdo}
 C(e^{L}\ch)= C(\ch)+l_1-l_2\frac{\delta}{g}\frac{y_2}{y_1}+\frac{\Delta \Delta_L}{gy_1}
\end{equation}
in the region $q>\frac{s^2}{2}$.
\end{itemize}
\end{lem}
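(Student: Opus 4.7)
The plan is to reduce the statement to a direct application of Lemma~\ref{lem:Bertram} by first computing the components of $e^{L}\ch$ and $e^{L}\ch'$ with respect to the fixed frame $(H,H^\perp,w)$, and then tracking how the point $P$ (or $P'$) and the slope $C$ change under the shift.

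First, I would compute $e^{L}\ch$ and $e^{L}\ch'$ explicitly. Using $e^{L}=1+L+\tfrac{L^2}{2}$ and the intersection numbers $H.H=g$, $H^\perp.H^\perp=-\delta$, $H.H^\perp=0$, and the orthogonality of $\Delta,\Delta',\Delta_L$ to $H$ and $H^\perp$, one finds that
\begin{align*}
(e^{L}\ch)_0 &= x, \quad (e^{L}\ch)_1 = (y_1+xl_1)H+(y_2+xl_2)H^\perp+(\Delta+x\Delta_L),\\
(e^{L}\ch)_2 &= z + l_1y_1 g - l_2y_2\delta + \Delta_L\Delta + \tfrac{x}{2}(l_1^2g-l_2^2\delta+\Delta_L^2),
\end{align*}
and similarly for $\ch'$ with $(x,y_i,\Delta,z)$ replaced by $(r,c_i,\Delta',\chi)$. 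The crucial observation is that the two `discriminant-like' quantities appearing in Lemma~\ref{lem:Bertram} are invariant under the shift:
\[
x(e^L c)_{1,\mathrm{new}} - r(e^L y)_{1,\mathrm{new}} = xc_1-ry_1, \qquad x(e^L c)_{2,\mathrm{new}} - r(e^L y)_{2,\mathrm{new}} = xc_2-ry_2,
\]
so in case (A) the non-vanishing/vanishing hypothesis on $xc_1-ry_1$ is preserved, and in particular $e^L\ch'$ is not a multiple of $e^L\ch$ iff $\ch'$ is not a multiple of $\ch$.

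Next, for case (A), I would substitute the new components into the formulas \eqref{eq-Csigma} and \eqref{eq-F}. For the slope, a short algebraic manipulation gives
\[
x\chi_{\mathrm{new}}-rz_{\mathrm{new}}=(x\chi-rz)+l_1 g(xc_1-ry_1)-l_2\delta(xc_2-ry_2)+\Delta_L(x\Delta'-r\Delta),
\]
from which dividing by $g(xc_1-ry_1)$ yields the displayed formula \eqref{eq:slope-shift} for $C(e^L\ch,e^L\ch')$. For the base point $P(e^L\ch)$, the $s$-coordinate is $(y_1+xl_1)/x=y_1/x+l_1$, accounting for the $l_1$ shift. The $q$-coordinate requires computing $F(e^L\ch)$ via \eqref{eq-F}; after expanding the squares, the cross terms $2xl_1y_1 g$ and $-2xl_2y_2\delta$ appearing in both $(y_1')^2 g$ and in $2xz_{\mathrm{new}}$ cancel, and one is left with
\[
F(e^L\ch)=F(\ch)-2l_2\tfrac{\delta}{g}\!\left(w-\tfrac{y_2}{x}\right)+\tfrac{\delta}{g}l_2^2-\tfrac{2\Delta\Delta_L}{xg}-\tfrac{\Delta_L^2}{g}.
\]
Plugging into $P(e^L\ch)_q=\tfrac12\bigl((\tfrac{y_1}{x}+l_1)^2-F(e^L\ch)\bigr)$ and separating the terms that equal $P(\ch)_q$ from the remainder gives exactly the claimed expression.

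Finally, case (B) is handled in the same way using Lemma~\ref{lem:Bertram}(B) and the formulas \eqref{eq-C-dim-1} and \eqref{eq-F'}; here $x=0$ so the shift $L$ does not alter $\ch_0$ but alters $\ch_1$ of the destabiliser and the second Chern character symmetrically, and the analogous cancellations produce the corresponding formula for $P'(e^L\ch')$ and show that the common slope changes by the terms in \eqref{eq:ssdo}. I do not expect any serious obstacle beyond organising the algebra carefully: the only point requiring attention is making sure that the $\Delta$, $\Delta'$, $\Delta_L$ contributions are correctly tracked, since these are the terms that genuinely depend on the non-trivial orthogonal complement of $\{H,H^\perp\}$.
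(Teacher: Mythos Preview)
Your proposal is correct and follows essentially the same approach as the paper: substitute the frame components of $e^L\ch$ and $e^L\ch'$ into the formulas \eqref{eq-Csigma}, \eqref{eq-F}, \eqref{eq-C-dim-1}, \eqref{eq-F'} from Lemma~\ref{lem:Bertram} and simplify. The paper's proof is terser, recording only the resulting expression for $F(e^L\ch)$ (which agrees with yours) and for $F'(e^L\ch')$, while you have helpfully spelled out the intermediate invariance $xc_{i,\mathrm{new}}-ry_{i,\mathrm{new}}=xc_i-ry_i$ and the computation of $x\chi_{\mathrm{new}}-rz_{\mathrm{new}}$.
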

\begin{proof}
The formula \eqref{eq:slope-shift} follows from \eqref{eq-Csigma}.
By using formula \eqref{eq-F}, we get
$$F(e^L\ch)= F(\ch)+\frac{\delta}{g}l_2^2-\frac{2\delta}{g}(w-\frac{y_2}{x})l_2-\frac{\Delta_L^2}{g}-\frac{2\Delta\Delta_L}{xg}.
$$
Thus we obtain the formula for $P(e^{L}\ch)$. This shows part (A).
The formula \eqref{eq:ssdo} follows from \eqref{eq-C-dim-1}. By using formula \eqref{eq-F'}, we get
$$F'(e^L\ch')= F'(\ch')+\frac{\delta}{g}l_2^2-\frac{2\delta}{g}(w-\frac{c_2}{r})l_2-\frac{\Delta_L^2}{g}-\frac{2\Delta'\Delta_L}{rg}.
$$
Thus we obtain the formula for $P'(e^{L}\ch')$. This shows part (B).
\end{proof}


Suppose we are in the situation of Lemma \ref{lem:Bertram}(A).  By (\ref{eq-F}),
\begin{eqnarray*}
F(\ch)&=&\frac{\delta}{g}\left(w-\frac{y_2}{x}\right)^2+\frac{1}{x^2 g}(\ch_1^2-2\ch_0\ch_2-\Delta^2),\\
F(e^L\ch)&=&\frac{\delta}{g}\left(w-\frac{y_2}{x}-l_2\right)^2+\frac{1}{x^2 g}(\ch_1^2-2\ch_0\ch_2-(\Delta+x\Delta_L)^2).
\end{eqnarray*}
Since $H\Delta =0$ by assumption, the Hodge Index Theorem implies that $-\Delta^2\geq 0$, and  equality holds if and only if $\Delta=0$.  Similarly, we have  $-(\Delta+x\Delta_L)^2\geq 0$ with equality if and only if   $\Delta+x\Delta_L=0$.  Therefore,  if $\ch$ is of Bogomolov type, then $F(\ch)\geq 0$ and $F(e^L(\ch))\geq 0$ for all $w$. Thus the points $P(\ch)$ and $P(e^L\ch)$ are on or below the parabola $q=\frac{s^2}{2}$. If we are in the situation of Lemma \ref{lem:Bertram}(B), then a similar argument works for $P'(\ch')$ and $P'(e^{L}\ch')$ provided $\ch'$ is of Bogomolov type.

\section{Potential walls in $(\lambda,0,0,q)$-plane for one-dimensional objects}\label{sec:potentialwallsone-dim} We give a parallel result of \ref{prop:Potentialwallsinlambda-qplane} for potential walls in the $(\lambda,0,0,q)$-plane in the case of  1-dimensional objects. We use the notation in \ref{para:intersectionnumbers}.

Fix $\ch$ with $\ch_0=0$ and $\ch_1H_\lambda>0$. Let $\ch'$ be a destabilizing character. So $\ch_0'\neq 0$. We have $\ch'=e^{L}\left(\ch_0',0,\ch_2'-\frac{{\ch_1'}^2}{2\ch_0'}\right)$, and $\ch=\left(0,\ch_1,\ch_2\right)=e^{L}\left(0,\ch_1,\ch_2-L\ch_1\right)$ with $L=\frac{\ch_1'}{\ch_0'}$.

\begin{prop}[Potential walls in $(\lambda,0,0,q)$-plane for one-dimensional objects]\label{prop:Potentialwallsinlambda-qplane-dim-1}
Let
$$\ch=\left(0,\ch_1,\ch_2\right)=(0,k\Theta+pf+\sum_i \xi_i\Theta_i,z).$$
Take the frame $(H_\lambda, H_\lambda^\perp,0)$ as \eqref{eq-frame-elliptic} with $w=0$.
Suppose $\ch_1 H_\lambda>0$ where $H_\lambda$ is given by \eqref{def:H_lambda}. 
Write
$$\quad L=a_L\Theta + b_L f + \sum_i \eta_i \Theta_i$$
for some $a_L, b_L, \eta_i \in \mathbb{R}$. 
Consider the $(\lambda,0,0,q)$-plane in $\Stab (X)$ in \ref{para:lambda-q-plane}.
Then the potential wall $W(e^{L}\ch, e^{L}\ch')$ with
$$\ch'=\left(\ch_0',\ch_1',\ch_2'\right)=(r, 0,\chi),$$
has the following asymptotic behavior in the $(\lambda,0,0,q)$-plane as $\lambda\to 0^+$. 
Write $e=-\Theta^2$, $\Delta_i$ as \eqref{eqn:Delta_i}, $\Delta_L=\sum_i \eta_i \Delta_i$ and $\Delta=\sum_i \xi_i \Delta_i$.
\begin{itemize}
\item[(A)] Suppose $k+\sum_i\xi_i=0$ and $p-ek+\sum_i\xi_i \theta_i\neq 0$.  Set
 \begin{align}
  A:=-\left(z+\Delta\Delta_L+(a_L+\sum_i\eta_i)\Big(p-\frac{e}{2}k+\sum_i\xi_i(\theta_i+\frac{e}{2})\Big)\right)\frac{a_L+\sum_i\eta_i}{p-ek+\sum_i\xi_i \theta_i},  \label{eq:A-dim-1}\\
  B:=\frac{\chi}{r}+\frac{\Delta_L^2}{2}-\left(b_L-ea_L+\frac{e}{2}\Big(a_L+\sum_i\eta_i\Big)+\sum_i\eta_i\theta_i\right)\frac{z+\Delta\Delta_L}{p-ek+\sum_i\xi_i \theta_i}.  \label{eq:B-dim-1}
  \end{align}
  \begin{itemize}
  \item[(A1)] If  $A \neq 0$ then the potential wall is asymptotic to
  \begin{equation}\label{eq:wall-asymptotic-type-B1-dim-1}
  q=\frac{A}{2\lambda^2}.
  \end{equation}
  \item[(A2)] If $A = 0$ and $B \neq 0$ then the potential wall is asymptotic to
  \begin{equation}\label{eq:wall-asymptotic-type-A2-dim-1}
  q=\frac{B}{2\lambda}.
  \end{equation}
  \item[(A3)] If $A=0$ and $B= 0$, then the potential wall is bounded as $\lambda\to 0^+$.
  \end{itemize}
\item[(B)] Suppose $k+\sum_i\xi_i\neq 0$.  Set
\begin{equation}\label{eq:D-dim-1}
  D:=\frac{\chi}{r}+\frac{\Delta_L^2}{2}-\left(z+\Delta\Delta_L+\Big(a_L+\sum_i\eta_i\Big)\Big(p-\frac{e}{2}k+\sum_i\xi_i(\theta_i+\frac{e}{2})\Big)\right)\frac{a_L+\sum_i\eta_i}{k+\sum_i\xi_i}.
  \end{equation}
  \begin{itemize}
  \item[(B1)] If $D \neq 0$   then the potential wall is asymptotic to
  \begin{equation}\label{eq:wall-asymptotic-type-C1-dim-1}
  q=\frac{D}{2\lambda}.
  \end{equation}
  \item[(B2)] If $D= 0$, then the potential wall is bounded as $\lambda\to 0^+$.
  \end{itemize}
\end{itemize}
\end{prop}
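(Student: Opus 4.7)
The plan is to follow the five-step template of the proof of Proposition~\ref{prop:Potentialwallsinlambda-qplane}, but using Lemma~\ref{lem:Bertram}(B) and Lemma~\ref{lem:shift by line bundle}(B) in place of their (A)-counterparts, since now $\ch_0 = 0$ and $y_1 > 0$ instead of $x \neq 0$. First I will decompose $L$ and $\ch_1$ in the frame $(H_\lambda, H_\lambda^\perp, 0)$: the expansions of $gl_1$, $l_1 \pm l_2$ carry over verbatim from Step~1 of the proof of Proposition~\ref{prop:Potentialwallsinlambda-qplane}, and applying the same computation to $(k, p, \xi_i)$ in place of $(a_L, b_L, \eta_i)$ yields the analogous formulas for $gy_1$ and $y_1 \pm y_2$. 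Since $\ch_1' = 0$, we have $c_1 = c_2 = 0$ and $\Delta' = 0$, and we may assume $r \neq 0$, since otherwise Lemma~\ref{lem:Bertram}(B) gives no potential wall in the $(s,q)$-plane.

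I will then apply Lemma~\ref{lem:shift by line bundle}(B) with $\delta = g$, $w = 0$, $c_1 = c_2 = 0$ and $\Delta' = 0$ to record the slope $C(e^L\ch)$ and the base point $P'(e^L \ch')$ of the wall $W(e^L\ch, e^L\ch')$ in the $(s,q)$-plane, and then specialize to $s = 0$. The resulting wall in the $(\lambda, 0, 0, q)$-plane is
\[
q \;=\; -\,\frac{(z + \Delta \Delta_L)\, l_1}{g\, y_1} \;+\; \frac{l_1 l_2 y_2}{y_1} \;-\; \frac{l_1^2 + l_2^2}{2} \;+\; \frac{\chi}{r g} \;+\; \frac{\Delta_L^2}{2 g}.
\]
All subsequent work is asymptotic analysis of this expression as $\lambda \to 0^+$.

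In Case~(A), where $k + \sum_i \xi_i = 0$, one checks that $y_1 = y_2$ exactly (both equal $(p - ek + \sum_i \xi_i \theta_i)\lambda/g$), so the middle two terms collapse to $-\tfrac12 (l_1 - l_2)^2 = -(a_L + \sum_i \eta_i)^2/(2\lambda^2)$. Expanding the remaining terms as Laurent polynomials in $\lambda$ and reading off the $\lambda^{-2}$ and $\lambda^{-1}$ coefficients yields the expressions \eqref{eq:A-dim-1} and \eqref{eq:B-dim-1} for $A$ and $B$. In Case~(B), where $k + \sum_i \xi_i \neq 0$, both $y_1$ and $y_2$ diverge like $\pm (k + \sum_i \xi_i)/g$ with opposite leading coefficients; consequently $y_2/y_1 \to -1$ as $\lambda \to 0^+$, and the leading $\lambda^{-2}$ contributions in $\tfrac{l_1 l_2 y_2}{y_1}$ and $-\tfrac{l_1^2 + l_2^2}{2}$ cancel, leaving a pure $\lambda^{-1}$ leading order with coefficient $D/2$ as in \eqref{eq:D-dim-1}.

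The main obstacle is the bookkeeping required to present the $\lambda^{-2}$ and $\lambda^{-1}$ coefficients in the closed form advertised in \eqref{eq:A-dim-1}--\eqref{eq:D-dim-1}. In Case~(A), reconciling the raw expansion with the stated $A$ uses the identity $p - \tfrac{e}{2}k + \sum_i \xi_i(\theta_i + \tfrac{e}{2}) = p - ek + \sum_i \xi_i \theta_i$, which follows from $\sum_i \xi_i = -k$. In Case~(B), matching $D$ requires carrying the next-to-leading expansion $y_2/y_1 = -1 + \frac{2 q_0}{k + \sum_i \xi_i}\lambda + O(\lambda^2)$, where $q_0 = p - \tfrac{e}{2}k + \sum_i \xi_i(\theta_i + \tfrac{e}{2})$. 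Beyond these algebraic simplifications, the asymptotic analysis is routine.
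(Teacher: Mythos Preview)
Your proposal is correct and follows essentially the same route as the paper: you invoke Lemma~\ref{lem:Bertram}(B) and Lemma~\ref{lem:shift by line bundle}(B) in place of their (A)-counterparts, derive the wall equation in the $(\lambda,0,0,q)$-plane, and read off the leading Laurent coefficients. Your displayed formula
\[
q \;=\; -\,\frac{(z + \Delta \Delta_L)\, l_1}{g\, y_1} \;+\; \frac{l_1 l_2 y_2}{y_1} \;-\; \frac{l_1^2 + l_2^2}{2} \;+\; \frac{\chi}{r g} \;+\; \frac{\Delta_L^2}{2 g}
\]
is exactly the paper's equation \eqref{eq:wall-lambda-q-general-dim-1} after the elementary identity $l_1l_2\tfrac{y_1+y_2}{y_1}-\tfrac12(l_1+l_2)^2=\tfrac{l_1l_2y_2}{y_1}-\tfrac{l_1^2+l_2^2}{2}$ (and correcting the evident misprint $c_1\mapsto y_1$ there). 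Your observation that $y_1=y_2$ in Case~(A), and the explicit two-term expansion of $y_2/y_1$ in Case~(B), supply more detail than the paper records but change nothing structurally.
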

\begin{proof} The proof is similar as the proof of Proposition \ref{prop:Potentialwallsinlambda-qplane}. 
We are taking $H_\lambda$ as \eqref{def:H_lambda} and taking $H=H_\lambda$ in \eqref{eq-ch-Maciocia}. 
Recall that $g$ is given in \eqref{eq:g_in_H_lambda_coordinate}.
Now $y_1g=\ch_1H_\lambda>0$ by the assumption. So $y_1>0$.
The potential wall $W(e^{L}\ch, e^{L}\ch')$ in the $(\lambda,0,0,q)$-plane is given by
\begin{equation}\label{eq:wall-lambda-q-general-dim-1}
q=-\left(z+\Delta\Delta_L\right)\frac{gl_1}{gc_1}\frac{1}{g}+l_1l_2\left(\frac{y_1+y_2}{y_1}\right)-\frac{1}{2}(l_1+l_2)^2+\frac{\chi}{rg}+\frac{\Delta_L^2}{2g}.
\end{equation}
Similar computation shows that
\begin{eqnarray*}
q&=&-\left(z+\Delta\Delta_L\right)\frac{(a_L+\sum_i\eta_i)+\Big(b_L-ea_L+(a_L+\sum_i\eta_i)(m-1)+\sum_i\eta_i\theta_i\Big)\lambda}{(k+\sum_i\xi_i)+\Big(p-ek+(k+\sum_i\xi_i)(m-1)+\sum_i\xi_i\theta_i\Big)\lambda} \cdot \frac{1}{g}\\
& &+\frac{1}{4}\left(\left(\frac{b_L-\frac{e}{2}a_L+\sum_i (\theta_i+\frac{e}{2})}{1+(m-\frac{e}{2}-1)\lambda}\right)^2-\frac{(a_L+\sum_i \eta_i)^2}{\lambda^2}\right)\\
& &\quad \cdot \frac{2\lambda\Big(p-\frac{e}{2}k+\sum_i\xi_i(\theta_i+\frac{e}{2})\Big)}{(k+\sum_i\xi_i)+\Big(p-ek+(k+\sum_i\xi_i)(m-1)+\sum_i\xi_i\theta_i\Big)\lambda} \\
& & -\frac{1}{2}\left(\frac{b_L-\frac{e}{2}a_L+\sum_i (\theta_i+\frac{e}{2})}{1+(m-\frac{e}{2}-1)\lambda}\right)^2+\left(\frac{\chi}{r}+\frac{\Delta_L^2}{2}\right)\cdot\frac{1}{g}.
\end{eqnarray*}
The proof follows from the asymptotic analysis of above formula as $\lambda\to 0^+$.
\end{proof}

\bibliography{refs}{}
\bibliographystyle{plain}

                                                              \end{document}